\newcommand{\nc}{\newcommand}
\newcommand{\delete}[1]{}
	\nc{\mlabel}[1]{\label{#1}}  
	\nc{\mcite}[1]{\cite{#1}}  
	\nc{\mref}[1]{\ref{#1}}  
	\nc{\meqref}[1]{\eqref{#1}} 
	\nc{\mbibitem}[1]{\bibitem{#1}} 
\nc{\mlabel}[1]{\label{#1}  
	{\hfill \hspace{1cm}{\bf{{\ }\hfill(#1)}}}}
\nc{\mcite}[1]{\cite{#1}{{\bf{{\ }(#1)}}}}  
\nc{\mref}[1]{\ref{#1}{{\bf{{\ }(#1)}}}}  
\nc{\meqref}[1]{\eqref{#1}{{\bf{{\ }(#1)}}}} 
\nc{\mbibitem}[1]{\bibitem[\bf #1]{#1}} 
\nc\delet{\delete}
\newtheorem{theorem}{Theorem}[section]
\newtheorem{prop}[theorem]{Proposition}
\newtheorem{lemma}[theorem]{Lemma}
\newtheorem{coro}[theorem]{Corollary}
\newtheorem{questions}[theorem]{Questions}
\theoremstyle{definition}
\newtheorem{defn}[theorem]{Definition}
\newtheorem{prop-def}{Proposition-Definition}[section]
\newtheorem{remark}[theorem]{Remark}
\newtheorem{tempex}[theorem]{Example}
\newtheorem{tempexs}[theorem]{Examples}
\newtheorem{temprmk}[theorem]{Remark}
\newtheorem{tempexer}{Exercise}[section]
\newenvironment{exam}{\begin{tempex}\rm}{\end{tempex}}
\nc{\vsa}{\vspace{-.1cm}} \nc{\vsb}{\vspace{-.2cm}}
\nc{\vsc}{\vspace{-.3cm}} \nc{\vsd}{\vspace{-.4cm}}
\nc{\vse}{\vspace{-.5cm}}
\nc{\name}[1]{{\bf #1}}
\nc{\NS}{U_{NS}}
\nc{\FN}{F_{\mathrm Nij}}
\nc{\dfgen}{V} \nc{\dfrel}{R}
\nc{\dfgenb}{\vec{v}} \nc{\dfrelb}{\vec{r}}
\nc{\dfgene}{v} \nc{\dfrele}{r}
\nc{\dfop}{\odot}
\nc{\dfoa}{\dfop^{(1)}} \nc{\dfob}{\dfop^{(2)}}
\nc{\dfoc}{\dfop^{(3)}} \nc{\dfod}{\dfop^{(4)}}
\nc{\mapm}[1]{\lfloor\!|{#1}|\!\rfloor}
\nc{\cmapm}[1]{\frakC(#1)}
\nc{\red}{\mathrm{Red}}
\nc{\cm}{C}
\nc{\supp}{\mathrm{Supp}}
\nc{\lex}{\mathrm{lex}}
\nc{\disp}[1]{\displaystyle{#1}}
\nc{\bin}[2]{ (_{\stackrel{\scs{#1}}{\scs{#2}}})}  
\nc{\binc}[2]{ \left (\!\! \begin{array}{c} \scs{#1}\\
    \scs{#2} \end{array}\!\! \right )}  
\nc{\bincc}[2]{  \left ( {\scs{#1} \atop
    \vspace{-.5cm}\scs{#2}} \right )}  
\nc{\sarray}[2]{\begin{array}{c}#1 \vspace{.1cm}\\ \hline
    \vspace{-.35cm} \\ #2 \end{array}}
\nc{\bs}{\bar{S}} \nc{\ep}{\epsilon}
\nc{\dbigcup}{\stackrel{\bullet}{\bigcup}}
\nc{\la}{\longrightarrow} \nc{\cprod}{\ast} \nc{\rar}{\rightarrow}
\nc{\dar}{\downarrow} \nc{\labeq}[1]{\stackrel{#1}{=}}
\nc{\dap}[1]{\downarrow \rlap{$\scriptstyle{#1}$}}
\nc{\uap}[1]{\uparrow \rlap{$\scriptstyle{#1}$}}
\nc{\defeq}{\stackrel{\rm def}{=}} \nc{\dis}[1]{\displaystyle{#1}}
\nc{\dotcup}{\ \displaystyle{\bigcup^\bullet}\ }
\nc{\sdotcup}{\tiny{ \displaystyle{\bigcup^\bullet}\ }}
\nc{\fe}{\'{e}}
\nc{\hcm}{\ \hat{,}\ } \nc{\hcirc}{\hat{\circ}}
\nc{\hts}{\hat{\shpr}} \nc{\lts}{\stackrel{\leftarrow}{\shpr}}
\nc{\denshpr}{\den{\shpr}}
\nc{\rts}{\stackrel{\rightarrow}{\shpr}} \nc{\lleft}{[}
\nc{\lright}{]} \nc{\uni}[1]{\tilde{#1}} \nc{\free}[1]{\bar{#1}}
\nc{\freea}[1]{\tilde{#1}} \nc{\freev}[1]{\hat{#1}}
\nc{\dt}[1]{\hat{#1}}
\nc{\wor}[1]{\check{#1}}
\nc{\intg}[1]{F_C(#1)}
\nc{\den}[1]{\check{#1}} \nc{\lrpa}{\wr} \nc{\mprod}{\pm}
\nc{\dprod}{\ast_P} \nc{\curlyl}{\left \{ \begin{array}{c} {} \\
{} \end{array}
    \right .  \!\!\!\!\!\!\!}
\nc{\curlyr}{ \!\!\!\!\!\!\!
    \left . \begin{array}{c} {} \\ {} \end{array}
    \right \} }
\nc{\longmid}{\left | \begin{array}{c} {} \\ {} \end{array}
    \right . \!\!\!\!\!\!\!}
\nc{\lin}{\call} \nc{\ot}{\otimes}
\nc{\ora}[1]{\stackrel{#1}{\rar}}
\nc{\ola}[1]{\stackrel{#1}{\la}}
\nc{\scs}[1]{\scriptstyle{#1}} \nc{\mrm}[1]{{\rm #1}}
\nc{\margin}[1]{\marginpar{\rm #1}}   
\nc{\dirlim}{\displaystyle{\lim_{\longrightarrow}}\,}
\nc{\invlim}{\displaystyle{\lim_{\longleftarrow}}\,}
\nc{\mvp}{\vspace{0.5cm}}
\nc{\mult}{m}       
\nc{\svp}{\vspace{2cm}} \nc{\vp}{\vspace{8cm}}
\nc{\proofbegin}{\noindent{\bf Proof: }}
\nc{\proofend}{$\blacksquare$ \vspace{0.5cm}}
\nc{\sha}{{\mbox{\cyr X}}}  
\nc{\ncsha}{{\mbox{\cyr X}^{\mathrm NC}}}
\newfont{\scyr}{wncyr10 scaled 550}
\nc{\ssha}{\mbox{\bf \scyr X}}
\nc{\ncshao}{{\mbox{\cyr
X}^{\mathrm NC,\,0}}}
\nc{\shpr}{\diamond}    
\nc{\shprc}{\shpr_c}
\nc{\shpro}{\diamond^0}    
\nc{\shpru}{\check{\diamond}} \nc{\spr}{\cdot}
\nc{\catpr}{\diamond_l} \nc{\rcatpr}{\diamond_r}
\nc{\lapr}{\diamond_a} \nc{\lepr}{\diamond_e} \nc{\sprod}{\bullet}
\nc{\un}{u}                 
\nc{\vep}{\varepsilon} \nc{\labs}{\mid\!} \nc{\rabs}{\!\mid}
\nc{\hsha}{\widehat{\sha}} \nc{\psha}{\sha^{+}} \nc{\tsha}{\tilde{\sha}}
\nc{\lsha}{\stackrel{\leftarrow}{\sha}}
\nc{\rsha}{\stackrel{\rightarrow}{\sha}} \nc{\lc}{\lfloor}
\nc{\rc}{\rfloor} \nc{\sqmon}[1]{\langle #1\rangle}
\nc{\altx}{\Lambda} \nc{\vecT}{\vec{T}} \nc{\piword}{{\mathfrak P}}
\nc{\lbar}[1]{\overline{#1}}
\nc{\dep}{\mathrm{dep}}
\nc{\mmbox}[1]{\mbox{\ #1\ }}
\nc{\ayb}{\mrm{AYB}} \nc{\mayb}{\mrm{mAYB}} \nc{\cyb}{\mrm{cyb}}
\nc{\ann}{\mrm{ann}} \nc{\Aut}{\mrm{Aut}} \nc{\cabqr}{\mrm{CABQR
}} \nc{\can}{\mrm{can}} \nc{\colim}{\mrm{colim}}
\nc{\Cont}{\mrm{Cont}} \nc{\rchar}{\mrm{char}}
\nc{\cok}{\mrm{coker}} \nc{\dtf}{{R-{\rm tf}}} \nc{\dtor}{{R-{\rm
tor}}}
\nc{\Div}{{\mrm Div}} \nc{\End}{\mrm{End}} \nc{\Ext}{\mrm{Ext}}
\nc{\FG}{\mrm{FG}} \nc{\Fil}{\mrm{Fil}} \nc{\Frob}{\mrm{Frob}}
\nc{\Gal}{\mrm{Gal}} \nc{\GL}{\mrm{GL}} \nc{\Hom}{\mrm{Hom}}
\nc{\hsr}{\mrm{H}} \nc{\hpol}{\mrm{HP}} \nc{\id}{\mrm{id}} \nc{\Id}{\mathrm{Id}}
\nc{\im}{\mrm{im}} \nc{\incl}{\mrm{incl}} \nc{\Loday}{\mrm{ABQR}\
} \nc{\length}{\mrm{length}} \nc{\LR}{\mrm{LR}} \nc{\mchar}{\rm
char} \nc{\pmchar}{\partial\mchar} \nc{\map}{\mrm{Map}}
\nc{\MS}{\mrm{MS}} \nc{\OS}{\mrm{OS}} \nc{\NC}{\mrm{NC}}
\nc{\rba}{\rm{Rota-Baxter algebra}\xspace}
\nc{\rbas}{\rm{Rota-Baxter algebras}\xspace}
\nc{\rbw}{\rm{RBW}\xspace}
\nc{\rbws}{\rm{RBWs}\xspace}
\nc{\rbadj}{\rm{RB}\xspace}
\nc{\mpart}{\mrm{part}} \nc{\ql}{{\QQ_\ell}} \nc{\qp}{{\QQ_p}}
\nc{\rank}{\mrm{rank}} \nc{\rcot}{\mrm{cot}} \nc{\rdef}{\mrm{def}}
\nc{\rdiv}{{\rm div}} \nc{\rtf}{{\rm tf}} \nc{\rtor}{{\rm tor}}
\nc{\res}{\mrm{res}} \nc{\SL}{\mrm{SL}} \nc{\Spec}{\mrm{Spec}}
\nc{\tor}{\mrm{tor}} \nc{\Tr}{\mrm{Tr}}
\nc{\mtr}{\mrm{tr}}
\nc{\ab}{\mathbf{Ab}} \nc{\Alg}{\mathbf{Alg}}
\nc{\Bax}{\mathbf{CRB}} \nc{\Algo}{\mathbf{Alg}^0}
\nc{\cRB}{\mathbf{CRB}} \nc{\cRBo}{\mathbf{CRB}^0}
\nc{\RBo}{\mathbf{RB}^0} \nc{\BRB}{\mathbf{RB}}
\nc{\Dend}{\mathbf{DD}} \nc{\bfk}{{\bf k}} \nc{\bfone}{{\bf 1}}
\nc{\base}[1]{{a_{#1}}} \nc{\Cat}{\mathbf{Cat}}
 \nc{\DN}{\mathbf{DN}}
\nc{\NA}{\mathbf{NA}}
\nc{\SDN}{\mathbf{SDN}}
\nc{\Diff}{\mathbf{Diff}} \nc{\gap}{\marginpar{\bf
Incomplete}\noindent{\bf Incomplete!!}
    \svp}
\nc{\FMod}{\mathbf{FMod}} \nc{\Int}{\mathbf{Int}}
\nc{\Mon}{\mathbf{Mon}}
\nc{\RB}{\mathbf{RB}} \nc{\remarks}{\noindent{\bf Remarks: }}
\nc{\Rep}{\mathbf{Rep}} \nc{\Rings}{\mathbf{Rings}}
\nc{\Sets}{\mathbf{Sets}} \nc{\bfx}{\mathbf{x}}
\nc{\BA}{{\Bbb A}} \nc{\CC}{{\Bbb C}} \nc{\DD}{{\Bbb D}}
\nc{\EE}{{\Bbb E}} \nc{\FF}{{\Bbb F}} \nc{\GG}{{\Bbb G}}
\nc{\HH}{{\Bbb H}} \nc{\LL}{{\Bbb L}} \nc{\NN}{{\Bbb N}}
\nc{\QQ}{{\Bbb Q}} \nc{\RR}{{\Bbb R}} \nc{\TT}{{\Bbb T}}
\nc{\VV}{{\Bbb V}} \nc{\ZZ}{{\Bbb Z}}
\nc{\cala}{{\mathcal A}} \nc{\calb}{{\mathcal B}}
\nc{\calc}{{\mathcal C}}
\nc{\cald}{{\mathcal D}} \nc{\cale}{{\mathcal E}}
\nc{\calf}{{\mathcal F}} \nc{\calg}{{\mathcal G}}
\nc{\calh}{{\mathcal H}} \nc{\cali}{{\mathcal I}}
\nc{\calj}{{\mathcal J}} \nc{\calk}{{\mathcal K}}
\nc{\call}{{\mathcal L}}
\nc{\calm}{{\mathcal M}} \nc{\caln}{{\mathcal N}}
\nc{\calo}{{\mathcal O}} \nc{\calp}{{\mathcal P}}
\nc{\calr}{{\mathcal R}} \nc{\cals}{{\mathcal S}} \nc{\calt}{{\mathcal T}}
\nc{\calv}{{\mathcal V}} \nc{\calw}{{\mathcal W}} \nc{\calx}{{\mathcal X}}
\nc{\CA}{\mathcal{A}}
\nc{\frakA}{{\mathfrak A}}
\nc{\fraka}{{\mathfrak a}}
\nc{\frakB}{{\mathfrak B}}
\nc{\frakb}{{\mathfrak b}}
\nc{\frakC}{{\mathfrak C}}
\nc{\frakE}{{\mathfrak E}}
\nc{\frakJ}{{\mathfrak J}}
\nc{\frakK}{{\mathfrak K}}
\nc{\frakd}{{\mathfrak d}}
\nc{\frakF}{{\mathfrak F}}
\nc{\frakg}{{\mathfrak g}}
\nc{\frakm}{{\mathfrak m}}
\nc{\frakM}{{\mathfrak M}}
\nc{\frakMo}{{\mathfrak M}^0}
\nc{\frakP}{{\mathfrak P}}
\nc{\frakp}{{\mathfrak p}}
\nc{\frakS}{{\mathfrak S}}
\nc{\frakU}{{\mathfrak U}}
\nc{\frakSo}{{\mathfrak S}^0}
\nc{\fraks}{{\mathfrak s}}
\nc{\os}{\overline{\fraks}}
\nc{\frakT}{{\mathfrak T}}
\nc{\frakTo}{{\mathfrak T}^0}
\nc{\oT}{\overline{T}}
\nc{\frakX}{{\mathfrak X}}
\nc{\frakXo}{{\mathfrak X}^0}
\nc{\frakx}{{\mathbf x}}
\nc{\frakTx}{\frakT}      
\nc{\frakTa}{\frakT^a}        
\nc{\frakTxo}{\frakTx^0}   
\nc{\caltao}{\calt^{a,0}}   
\nc{\ox}{\overline{\frakx}} \nc{\fraky}{{\mathfrak y}}
\nc{\frakz}{{\mathfrak z}} \nc{\oX}{\overline{X}} \font\cyr=wncyr10
\nc{\tred}[1]{\textcolor{red}{#1}} \nc{\tblue}[1]{\textcolor{blue}{#1}}
\nc{\tgreen}[1]{\textcolor{green}{#1}} \nc{\tpurple}[1]{\textcolor{purple}{#1}}
\nc{\tviolet}[1]{\textcolor{violet}{#1}}
\nc{\xing}[1]{\tpurple{Xing: #1}}
\nc{\shilong}[1]{\tpurple{Shilong: #1}}
\nc{\li}[1]{\tred{#1}}
\nc{\lir}[1]{\tred{Li: #1}}
\nc{\hu}[1]{\tblue{Huhu: #1}}
\nc{\Hu}[1]{\tblue{ #1}}
\nc{\markus}[1]{\tviolet{Markus: #1}}
\nc{\Markus}[1]{\tviolet{#1}}
\nc{\lint}{\cum} \nc{\vint}{\cum}
\nc{\E}{\cale} \nc{\J}{\calj} \nc{\D}{\partial} \nc{\inv}[1]{{#1}^{-1}} \nc{\bvp}[2]{\boxed{\begin{array}{l}#1 \\#2\end{array}}}
\nc{\spe}{\NN||X||} \nc{\vir}{\ZZ||X||} \nc{\ratvir}{\QQ||X||} \nc{\lspe}{\NN[[X]]} \nc{\lvir}{\ZZ[[X]]}
\nc{\vs}{\calv} \nc{\cvs}{{\rm \hat{\calv}}} \nc{\com}{\,\boxdot\, } \nc{\bcom}{\,\square\, }
\nc{\qd}{\bf} \nc{\conj}{\mathrm{conj}}  \nc{\ord}{\rm ord}
\nc{\nder}[2]{#1^{(#2)}} \nc{\dist}[1]{d(#1)} \nc{\divid}[2]{{#1}^{[#2]}} \nc{\der}[1]{#1'} \nc{\pp}{\Pi} \nc{\dd}{\D} \nc{\rinv}{\cvs^*}
\nc{\osum}{+}
\nc{\cum}{{\textstyle \varint}}
\nc{\jint}{\cum_{\!\!E}}
\nc{\jt}{\cum_{\mkern-5mu T}}
\nc\derk{{\partial}}
\nc\pik{{P_K}}\nc\e{\E_{{\rm DRB}}}
\nc{\ilvir}{\lvir'}
\nc{\aexp}{{e^X}} \nc\dck{{L}}
\nc\dlk{\D K^{-1}} \nc\ilk{K\cum_{\aexp}}
\nc\umdiff[2]{\derk_{K^{-1}}\left(#1\right)#2+ #1\derk_{K^{-1}}\left(#2\right)-\derk_{K^{-1}}(1)#1#2}
\nc{\cancum}{\cum_{\kern-1.5pt 0}^x}
\nc{\comcum}{\cum_{\kern-2.5pt E}}
\nc{\expcum}{\cum_{\!\!\aexp}}
\begin{document}

\title[Integro-differential rings on species and derived structures]
{Integro-differential rings on species and derived structures}

\author{Xing Gao}
\address{School of Mathematics and Statistics, Lanzhou University
Lanzhou, 730000, China;
Gansu Provincial Research Center for Basic Disciplines of Mathematics
and Statistics, Lanzhou, 730070, China
}
\email{gaoxing@lzu.edu.cn}

\author{Li Guo}
\address{
Department of Mathematics and Computer Science,
Rutgers University,
Newark, NJ 07102, USA}
\email{liguo@rutgers.edu}

\author{Markus Rosenkranz}
\address{RISC, Johannes Kepler University, 4040 Linz, Austria}
\email{m.rosenkranz@risc.uni-linz.ac.at}



\author{Huhu Zhang}
\address{School of Mathematics and Statistics
	Yulin University,
	Yulin, Shaanxi 719000, China}
\email{huhuzhang@yulinu.edu.cn}

\author{Shilong Zhang}
\address{College of Science, Northwest A\&F University, Yangling 712100, Shaanxi, China}
\email{shlz@nwafu.edu.cn}

\date{\today}

\begin{abstract}
  In the theory of species, differential as well as integral operators are known to arise in a
  natural way. In this paper, we shall prove that they precisely fit together in the algebraic
  framework of integro-differential rings, which are themselves an abstraction of classical calculus
  (incorporating its Fundamental Theorem). The results comprise (set) species as well as linear
  species. Localization of (set) species leads to the more general structure of modified
  integro-differential rings, previously employed in the algebraic treatment of Volterra integral
  equations. Furthermore, the ring homomorphism from species to power series via taking generating
  series is shown to be a (modified) integro-differential ring homomorphism. As an application, a
  topology and further algebraic operations are imported to virtual species from the general theory
  of integro-differential rings.
\end{abstract}

\makeatletter
\@namedef{subjclassname@2020}{\textup{2020} Mathematics Subject Classification}
\makeatother
\subjclass[2020]{
18M30, 
17B38, 
45J05, 
47G20, 
12H05, 
34M15 
}

\keywords{Species, integro-differential ring, differential Rota-Baxter ring, modified integro-differential ring, generating series}

\maketitle

\vspace{-1cm}

\tableofcontents

\vspace{-1cm}

\setcounter{section}{0}
\allowdisplaybreaks

\section{Introduction}

In this paper, we endow species (as we shall see later, this comprises: virtual set species, virtual
linear species, and localized virtual set species) with the structure of an integro-differential
ring or modified integro-differential ring. The well-known passage to the generating series will be
seen to constitute a homomorphism of the corresponding (modified) integro-differential
rings. This encapsulates a powerful tool for working with species, as it translates combinatorial
relations into differential and/or integral equations, which may subsequently be investigated by
algebraic and (possibly) also analytic means. To this end, new operations on species will also be
introduced from the general theory of integro-differential rings.

\subsection{Species and their operations}

The notion of species was introduced by A. Joyal~\mcite{J0} as a general and powerful tool to
formalize the idea of combinatorial structures, (primarily) with the aim of counting and classifying
them. Species are formulated abstractly as functors from the category $\mathbb{B}$ of finite sets
with bijections to the category $\mathbb{E}$ of finite sets with functions, thus providing a way to
handle combinatorial objects independently of the approach used to define them---such as sets,
algorithms, constructions, recursion, etc. Species theory was developed further as a fundamental
tool in combinatorics~\mcite{BLL} and several other areas of mathematics~\mcite{AM}, providing deep
insights into the nature of combinatorial objects and their relationships. As an important starting
point, the enumerative properties of a combinatorial species are faithfully reflected in its
generating function.

Species come in two main flavors, both of which shall be considered here: Using mere sets as labels,
one has the basic variety mentioned above, i.e. functors $\mathbb{B} \to \mathbb{E}$ that we shall
refer to as \name{set species} (otherwise called $\mathbb{B}$-species or just \name{species}). In
some important combinatorial applications, the label sets carry a linear order and relabellings are
required to respect the order, which leads to the category $\mathbb{L}$ of linear orders with
monotone functions; the corresponding functors $\mathbb{L} \to \mathbb{B}$ are then called
\name{linear species} (or $\mathbb{L}$-species). As detailed below, both varieties of species come
with natural algebraic operations, and these are preserved by their corresponding generating
functions. In particular, the ring structure associated with these species is respected: The map
sending a species to its exponential generating function is a homomorphism from the species ring to
the ring of formal power series.

\subsection{Integro-differential rings and modified integro-differential rings}

For studying combinatorial structures in more depth, a natural derivation as well as a family of
integral operators---the so-called Joyal integrals---have been set up on the ring of set species. We
show in this paper that the derivation together with the Joyal integral with respect to the analytic
exponential endows the ring of virtual species with the structure of an integro-differential ring
(and \emph{a fortiori} with the structure of a differential Rota-Baxter ring). The same is also true
for virtual linear species (which come with a canonical notion of integral), where this result is
less surprising, due to the close proximity of linear species to formal power series. Going back to set species, we obtain the more general structure of a modified integro-differential ring on
localized virtual species.

Before continuing, let us recall and fix some preliminary notions (we shall formally introduce them
in Definitions \mref{de:intdiff} and \mref{de:rrb} below). The central notion of \name{differential
  ring} originated from the algebraic study of differential equations, abstracted from analysis by
Joseph Fels Ritt \mcite{Ri2} and subsequently developed further by Ellis Kolchin \mcite{Ko} in the
last century, blossoming into a vast field of research ever since~\mcite{CH,PS}.  A differential
ring is thus a ring $R$ together with an additive operator $\D \colon R\to R$ that satisfies the
\name{Leibniz rule}
\begin{equation*}
  \D (xy) = \D (x) y + x \D (y) \quad\text{ for all }\, x,y\in R.
  \mlabel{eq:diff0}
\end{equation*}
Such an operator is called a \name{derivation}.

There is also an abstraction of the simple integral operator $f(x)\mapsto \cum_{\!a}^x \, f(t)\,dt$,
commonly called \name{Rota-Baxter operator} in an algebraic setting. Consequently, a
\name{Rota-Baxter ring} (of weight zero) is defined to be a ring $R$ equipped with an additive operator
$\cum\colon R \to R$ satisfying the \name{Rota-Baxter axiom}
\begin{equation}
  \cum (x) \cum (y) = \cum (x \cum (y)) + \cum (y \cum (x)) \quad\text{ for all }\, x,y\in R,
  \mlabel{eq:rba0}
\end{equation}
as a \name{pure integration by parts formula} (pure because it does not appeal to the
derivation). The notion of Rota-Baxter operator was introduced in the 1960 study of G. Baxter in
probability~\mcite{B} and has been studied during recent decades in numerous
areas---see~\mcite{CK,Gub,Ro1,Ro2} and the references therein.

Incorporating differential and integral operator together as in calculus~\mcite{GK}, a
\name{differential Rota-Baxter ring} is a triple $(R,\D,\cum)$ consisting of a differential ring
$(R,\D)$ and Rota-Baxter ring $(R,\cum)$ satisfying the First Fundamental Theorem of Calculus
$\D \cum = \id$.

For some applications in analysis, such as boundary problems~\cite{R,RBE, RR}, a tighter coupling
between derivation and Rota-Baxter operator is needed. To this end, the notion of
\name{integro-differential ring} was introduced: This is again a triple $(R, \D, \cum)$ as above,
but satisfying instead of~\meqref{eq:rba0} the stronger \name{integration by parts formula} or
\name{hybrid Rota-Baxter axiom}~\mcite{RR} in the form
\begin{equation*}
  \cum (x' \cum (y)) = x \cum (y) - \cum (xy) \quad\text{ for all }\,  x,y\in R,
\end{equation*}
where $x':= \D x$. Thus every integro-differential ring is a differential Rota-Baxter ring. There
have been many subsequent studies of their theories and
applications~\mcite{ACPRR2,ACPRR1,BCQ,GGR,GRR,RRTB,ZGK}.

\subsection{Algebraic structures on species}
As mentioned before, one aim of introducing the notions of integro-differential and differential
Rota-Baxter algebra was to provide an algebraic framework for differential and integro-differential
equations, along with their boundary problems (including initial value problems). It is then but
natural to expect that such equations can also be studied in other contexts. Indeed, the species
derivation along with the Joyal integrals have stimulated us to uncover an integro-differential algebraic structure for species.

It is known that the ring of species together with the derivation forms a differential ring, implying all the usual properties of differentiation~\cite[Exc.~2.5.10 and~2.6.19]{BLL}, along with
some powerful applications and expansions~\mcite{Lab,Lab4}. We will subsequently show that, equipped
with the Joyal integral with respect to the analytic exponential, the ring of species is in fact an
integro-differential ring. The integro-differential structure then allows us to obtain additional
topological and algebraic structures on species such as those obtained in~\mcite{GKR}, using the
general theory of integro-differential rings. Furthermore, the ring of localized set species can be
endowed with a modified integral-differential ring structure, which was introduced recently~\mcite{GGL2} extracting from Volterra integral operators with separable kernels. As mentioned earlier, the passage to the generating series turns out to be an
integro-differential homomorphism, which might be helpful by translating questions on
integro-differential species relations into questions on integro-differential equations for power series. In another work~\mcite{FGPXZ}, built on the combinatorial structure of free Rota-Baxter algebras, the species of Rota-Baxter algebra is constructed and is shown to be a twisted bialgebra.

Let us briefly outline the remaining of the paper. In Section~\mref{sec:idrsp}, we introduce the
integro-differential structure on the ring of virtual set and linear species (Theorems~\mref{thm:intd} and ~\mref{thm:idls}). Then we provide a
modified integro-differential ring structure on rings of localized virtual set species (Theorem~\mref{thm:dir1}). The ring
homomorphism from virtual species to power series (via taking the generating series) is shown to be
an integro-differential homomorphism (Proposition~\mref{prop:homo}). Under localization at a species, this homomorphism extends to a homomorphism of modified integro-differential rings (Proposition~\mref{pp:mintdh}).

In Section~\mref{s:app}, we apply general results from the theory of integro-differential rings to
obtain a topology from filtrations on virtual (set/linear) species and on localized virtual set
species, thus making these into a topological integro-differential ring (Theorem~\mref{thm:cont4}). Further applications are facilitated by introducing new operations, such as divided powers (Definition~\mref{defn:divid}) and exponentiation (Definition~\mref{defn:expon}) relative to a
base species. The functorial composition of virtual set species is shown to be equipotent with the composition defined
for integro-differential rings (Theorem~\mref{pp:funcomp}).

\noindent
{\bf Notation. }
We write $\ZZ$ and $\NN$ to denote, respectively, the set of all / nonnegative 
integers. All rings considered in this paper are assumed to be unitary and commutative.

\section{Integro-differential rings on species}\mlabel{sec:idrsp}

In this section, we will impose an integro-differential structure on the species ring. For notation
and terminology on species not defined in this paper, the readers are primarily referred
to~\mcite{BLL} and then further to~\mcite{AM}.

\subsection{Derivation and Rota-Baxter operators on set species}\mlabel{ss:species}
We first recall some basic definitions and results on species~\mcite{BLL}.
Denote by $U+V$ and $U\times V$ the disjoint union (confer~\cite[\S1.3]{BLL} and Remark~2 there) and
direct product of sets $U$ and $V$, respectively.
\begin{defn}
A \name{(set) species of structures} is a rule $F$ which produces
\begin{enumerate}
\item for each finite set $U$, a finite set $F[U]$ and
\item for each bijection $\sigma\colon U\to V$, a function $F[\sigma]\colon F[U] \to F[V]$.
\end{enumerate}
The functions $F[\sigma]$ should further satisfy the following functorial properties:
\begin{enumerate}
\item[(i)] For all bijections $\sigma\colon U\to V$ and $\tau\colon V\to W$,
\begin{align}
F[\sigma\circ \tau] = F[\sigma] \circ F[\tau]. \mlabel{eq:func1}
\end{align}
\item[(ii)] For the identity map $\id_U\colon U\to U$,
\begin{align}
F[\id_U] = \id_{F[U]}. \mlabel{eq:func2}
\end{align}
\end{enumerate}
As mentioned in the Introduction, this amounts to saying that we have a
functor~$F\colon \mathbb{B} \to \mathbb{E}$.
\end{defn}

An element $s\in F[U]$ is called a \name{structure of species} $F$ or briefly an
\name{$F$-structure} on $U$. The function $F[\sigma]$ is called the \name{transport} of
$F$-structures along $\sigma$. It follows immediately from~(\mref{eq:func1}) and
(\mref{eq:func2}) that each transport $F[\sigma]$ is necessarily a bijection.

\begin{exam} Let $U$ be a finite set. In each of the following cases, the transport of structures $F[\sigma]$ is obvious (see~\mcite{BLL} for details).

\begin{enumerate}
\item The species $\calg$ of graphs, defined by
  $\calg[U]:= \{\text{simple graphs with vertext set $U$ }\}$. Similarly, there are species of
  trees and species of rooted trees.

\item The species $E$ of sets, defined by $E[U]:= \{U\}$. For each finite set $U$, there is a unique
  $E$-structure, namely the set $U$ itself.

\item The species $E_n$ with $n\geq 1$, characteristic of sets of cardinality $n$, defined by
  \[ E_n[U] := \left\{ \begin{array}{cc} \{U\}, & \text{ if } |U| = n,\\
                         \emptyset,& \text{ otherwise.}  \end{array}\right. \] Here $\emptyset$ is
  the empty set. In particular, one writes $X:= E_1$, characteristic of singletons, and $1:=E_0$,
  characteristic of the empty set, that is,
  \[ 1[U] := \left\{ \begin{array}{cc} \{U\}, & \text{ if } U =\emptyset,\\
                       \emptyset,& \text{ otherwise.}  \end{array}\right. \]
\item The \name{empty species}, denoted by 0, is defined by $0[U] = \emptyset$ for all $U$.
\end{enumerate}
The species $1$ is the multiplicative identity, as will be made clear later.
\end{exam}

\begin{defn}
\mlabel{defn:iso}
  Let $F$ and $G$ be two species of structures. An \name{isomorphism from $F$ to $G$} is a family of
  bijections $\varphi_U\colon F[U] \to G[U]$ satisfying the following naturality condition: For any
  bijection $\sigma\colon U \to V$ between two finite sets, the following diagram commutes:
\[
\xymatrix{
F[U]\ar^{\varphi_U}[r] \ar_{F[\sigma]}[d]&G[U]\ar_{G[\sigma]}[d]\\
F[V]\ar^{\varphi_V}[r] &G[V]
}
\]
\end{defn}

In categorical terms, we have a natural isomorphism (i.e. an invertible natural
transformation). Since isomorphic species essentially possess the ``same'' combinatorial properties,
as usual they will henceforth be considered equal in the combinatorial algebra to be developed. Thus
we write $F = G$ to indicate that $F$ and $G$ are isomorphic. We next recall some operations on
species of structures.

\begin{defn}
  Let $F$ and $G$ be two species of structures. The species $F + G$, called the \name{sum} of $F$
  and $G$, is defined as follows: an $(F + G)$-structure on a finite set $U$ is an $F$-structure on
  $U$ or (exclusive) a $G$-structure on $U$. In other words, we have the disjoint union
  \begin{equation*}
    (F+G)[U]:= F[U] + G[U].
    \mlabel{eq:usum}
  \end{equation*}
  The transport along $\sigma\colon U\to V$ is defined by setting, for an $(F + G)$-structure $s$ on
  $U$,
  \begin{equation*}
    (F+G)[\sigma](s):= \left\{ \begin{array}{cc} F[\sigma](s) & \text{ if } s\in F[U],\\
    G[\sigma](s)& \text{ if } s\in G[U].  \end{array}\right.
    \mlabel{eq:tsum}
  \end{equation*}
\end{defn}

For a species $F$, a finite set $U$ and $n\geq 0$, denote by $nF$ and $nU$ the species $F+ \cdots +F$ ($n$ terms) and the disjoint union $U+\cdots +U$ ($n$ terms), respectively. Note that
$(nF)[U] = n \,F[U]$.

As expected, addition of species is associative and commutative, up to isomorphism, so that
$F+G = G+F\text{ and } (F+G)+H = F+(G+H)$. Moreover, the empty species $0$ is the neutral element
for addition, $F + 0 = 0+F = F$.

\begin{defn}
  A family $(F_i)_{i\in I}$ of species is said to be \name{summable} if, for any finite set $U$, one
  has $F_i[U] = \emptyset$ except for finitely many indices $i\in I$.  The sum of $(F_i)_{i\in I}$
  is then denoted by $\sum_{i\in I} F_i$.
\end{defn}

For example, the family $(E_n)_{n\geq 0}$ is summable with sum $E = E_0 + E_1 +E_2 +\cdots$. In
general, for any species $F$, we have~\cite[p.~30]{BLL}
\begin{equation}
F = F_0 + F_1 +F_2 + \cdots,
\mlabel{eq:cano}
\end{equation}
where $F_n$ is the species $F$ restricted to cardinality $n\geq 0$, meaning
\[
F_n[U] =\left\{\begin{array}{ll}F[U], & \quad \text{if $|U|=n$},\\ \emptyset, & \quad \text{otherwise. }\end{array} \right.
\]
The expression in ~(\mref{eq:cano}) is called the \name{canonical decomposition} of $F$.

\begin{defn}
Let $F$ and $G$ be two species of structures. The species $FG$, called the \name{product} of $F$ and $G$, is defined as follows:
for any finite set $U$
\begin{equation*}
(FG)[U]:= \sum_{V+W = U} F[V] \times G[W],
\mlabel{eq:prod}
\end{equation*}
where the disjoint sum is taken over all pairs $(V, W)$ forming a composition of $U$. The transport
along a bijection $\sigma\colon U \to V$ is carried out by setting, for each $(FG)$-structure
$s = (f, g)$ on $U$,
\begin{equation*}
  (FG)[\sigma](s):= (F[\varphi](f), G[\gamma](g)), \mlabel{eq:tprod}
\end{equation*}
where $\varphi:= \sigma|_V$ and $\gamma:= \sigma|_W$ are the restrictions of $\sigma$ to $V$ and
$W$, respectively.
\end{defn}

Informally, an $(FG)$-structure is an ordered pair consisting of an $F$-structure and a
$G$-structure over complementary disjoint subsets. The product of species is associative and
commutative up to isomorphism.
The product admits the species 1 as neutral element, and the species 0 as absorbing element, meaning
$1F=F1=F \ \text{ and }\ 0F=F0=0$.
Moreover, multiplication distributes over addition, $F(G+H) = FG + FH$. Under these operations, the
species form a semiring, denoted by $\spe$. The situation is analogous to the case of the semiring
$\NN$ of natural numbers, which lacks also a (global) operation of subtraction. The problem of
establishing a combinatorial form of subtraction has been solved by Joyal \mcite{J1, J2} and Yeh
\mcite{Y1, Y2} by the introduction of the concept of virtual species.

\begin{defn} Let $\spe$ be the semiring of species.
\begin{enumerate}
\item A \name{virtual species} is an element of the quotient set
$\vir := (\spe \times \spe)/ \sim,$
where the equivalence relation $\sim$ is defined by
$$(F, G) \sim (H, K) \Leftrightarrow F+K = G+H.$$
The class of $(F, G)$ according to $\sim$ is denoted by $F- G$.

\item Two species of structures $F$ and $G$ are said to be \name{unrelated} if the only
subspecies of $F$ which is isomorphic to a subspecies of $G$ is the empty species.
A virtual species $\Phi$ is said to be written in \name{reduced form} $\Phi = F - G$ if the species
$F$ and $G$ are unrelated.
\end{enumerate}
\mlabel{defn:virspec}

Every virtual species $\Phi$ can be written uniquely in reduced form
$\Phi= \Phi^+ - \Phi^-$~\cite[p.~122]{BLL}, where the species $\Phi^+$ (resp.~$\Phi^-$) are called
the \name{positive} (resp.~\name{negative}) part of $\Phi$.
\end{defn}

At this point, the ring of virtual species---loosely referred to earlier as the \name{species
  ring}---can be introduced in an explicit manner.

\begin{lemma}{\rm (\cite[p.~122]{BLL}, Yeh~\cite[Theorem~II.2.11]{Y2})}
The set $\vir$ of virtual species constitutes a commutative ring under
the operations of addition and multiplication defined by
\begin{eqnarray*}
 (F - G) + (H - K) &=& (F + H) - (G +K),\\
(F - G) (H - K)   &=& (FH + GK) - (FK + GH),
\end{eqnarray*}
with zero $0 = 0 - 0$ and unity $1 = 1-0$. The additive inverse of $F-G$ is $G-F$.

Moreover, the injection $\spe \hookrightarrow \vir, \ F \mapsto F-0$ is a semiring
homomorphism. Furthermore, $\vir$ is a factorial ring~\cite[Theorem~II.2.11]{Y2} and so, in
particular, an integral domain.  \mlabel{lem:virr}
\end{lemma}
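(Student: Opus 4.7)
My approach is to view $\vir$ as the Grothendieck-style ring completion of the commutative semiring $\spe$, in direct parallel to the construction of $\ZZ$ from $\NN$. The entire argument hinges on a single non-obvious ingredient: the \emph{cancellation law} in $\spe$, namely that $F + H = G + H$ in $\spe$ forces $F = G$. Granted this, all remaining verifications are routine.

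First I would verify that $\sim$ is an equivalence relation. Reflexivity and symmetry are immediate from the commutativity of $+$ in $\spe$, while transitivity reduces to cancellation: if $(F,G)\sim(H,K)$ and $(H,K)\sim(P,Q)$, then $F+K = G+H$ and $H+Q = K+P$, so adding gives $F+K+H+Q = G+H+K+P$ and cancellation of $H+K$ yields $F+Q=G+P$. Next I would check that both operations descend to well-defined operations on $\vir$; this is a direct calculation using only the semiring axioms of $\spe$, most conveniently done by showing that replacing one representative $(F,G)$ by an equivalent one leaves the resulting class unchanged. The ring axioms (associativity, commutativity, distributivity, existence of $0$ and $1$) then lift in one line each from their counterparts in $\spe$. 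The additive inverse identity $(F-G)+(G-F) = (F+G)-(G+F) \sim (0,0)$ is clear from $F+G = G+F$. Finally, the map $F \mapsto F-0$ preserves sums and products by definition, and its injectivity is itself a special case of cancellation.

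The main obstacle is therefore the cancellation law. To handle it, I would exploit the canonical decomposition $F = F_0 + F_1 + F_2 + \cdots$ of~\meqref{eq:cano}: since both $+$ and $\cdot$ respect cardinality in the obvious manner, an identity $F+H = G+H$ splits cardinality-by-cardinality into identities $F_n + H_n = G_n + H_n$ for each $n \geq 0$. For fixed $n$, an $F_n$ is nothing more than a finite $\mathfrak{S}_n$-set (with the symmetric group acting by transport along relabellings of an $n$-element set), and in the category of finite $G$-sets over any finite group $G$ the cancellation $A \sqcup C \cong B \sqcup C \Rightarrow A \cong B$ follows from comparing multiplicities of each orbit type. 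This localizes the only real work of the proof; alternatively one may cite the treatment in~\mcite{BLL} directly.

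The final assertion that $\vir$ is a factorial ring, and hence in particular an integral domain, is the substantive theorem of Yeh~\cite[Theorem~II.2.11]{Y2}, which I would quote rather than reprove here, since its proof depends on the molecular decomposition of species and is orthogonal to the ring-theoretic content of the present lemma.
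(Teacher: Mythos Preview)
Your argument is correct and well-organized: the Grothendieck completion of the commutative semiring $\spe$, with cancellation supplied via the cardinality-wise reduction to finite $\mathfrak{S}_n$-sets and orbit-type counting, is exactly the right route, and citing Yeh for factoriality is appropriate.

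Note, however, that the paper itself does \emph{not} prove this lemma: it is stated with attributions to~\cite[p.~122]{BLL} and~\cite[Theorem~II.2.11]{Y2} and no proof is supplied. So there is nothing to compare against in the paper proper; your write-up is a self-contained justification of a result the authors simply import from the literature, and it matches the standard treatment one finds in those references.
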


The localization of the integral domain $\vir$ can constructed as usual~\cite{AMD}. Indeed, for any
multiplicative subset $S$ of $\vir$, one defines an equivalence relation $\sim$ on $\vir \times S$
by setting
$$(\Phi, s)\sim (\Psi, t) \quad\text{iff}\quad \Psi s - \Phi t = 0$$
for any $(\Phi, s), (\Psi, t)\in \vir \times S$. We write $S^{-1}\vir$ for the set of all
equivalence classes of $\sim$, and $\frac{\Phi}{s}$ for the equivalence class of $(\Phi, s)$. For
any $\frac{\Phi}{s}, \frac{\Psi}{t}\in S^{-1}\vir$, one defines
$$\frac{\Phi}{s}+\frac{\Psi}{t}:=\frac{\Phi t+\Psi s}{st} \qquad\text{and}\qquad \frac{\Phi}{s}\cdot
\frac{\Psi}{t}:=\frac{\Phi}{s} \frac{\Psi}{t}:=\frac{\Phi\Psi}{st} \, .$$
Then $( S^{-1}\vir, +, \cdot)$ is a commutative ring with identity $1:=\frac{1}{1}$, called the
\textbf{localization} of $\vir$ by $S$. We write $s^{-n}=\frac{1}{s^n}$ for any $n\in \mathbb{N}$.

\begin{remark}
  Regarding each element in $\mathbb{Z}$ as a species and taking $S = \mathbb{Z}\backslash \{0\}$,
  we write $\ratvir$ for the localization of $\vir$ by $S$; its elements are called \name{rational
    species} in \cite{Lab6}.  \mlabel{rem:rats}
\end{remark}

\begin{defn}
  Let $F$ be a species of structures. The species $\D F = F'$, called the \name{derivative} of $F$,
  is defined as follows: An $F'$-structure on $U$ is an $F$-structure on $U^+ := U \cup \{*\}$,
  where $*$ is a element chosen outside of $U$. In other words, one sets $F'[U]:= F[U^+]$.
The transport along a bijection $\sigma\colon U \to V$ is carried out by setting
$F'[\sigma] := F[\sigma^+]$ for any $F'$-structure $s$ on $U$,
where $\sigma^+\colon U + \{*\} \to V + \{*\}$ is the canonical extension with
$ \sigma^+(u):= \sigma(u) \text{ if } u\in U \text{ and } \sigma^+(*):= *$.
We refer to the operator $\D$ as the \name{combinatorial derivation}.\footnote{We prefer to use the
  term \name{combinatorial differential operators} for polynomials in $\D$, with coefficients coming
  from various domains such as $\NN$, $\ZZ$, $\spe$, $\vir$ and localizations.}
\smallskip

\noindent Figure~\mref{fig4} illustrates an $F'$-structure in a graphical way.
\end{defn}

\begin{figure}[h]
\begin{center}
\includegraphics[scale=0.6]{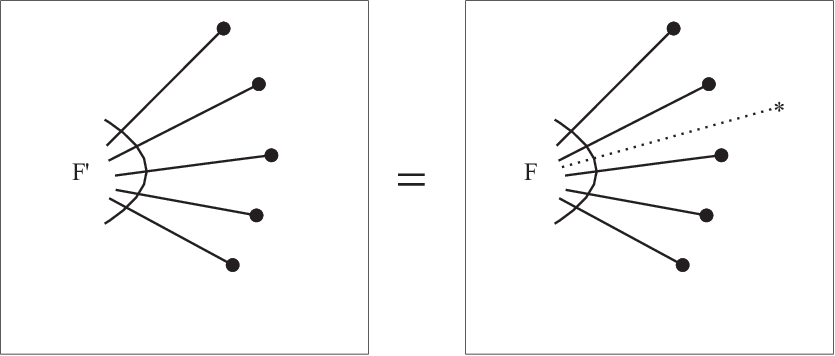}\caption{An $F'$-structure}\mlabel{fig4}
\end{center}
\end{figure}

Up to isomorphism, the derivation on species is additive and satisfies moreover the crucial Leibniz
rule~\cite[p.~52]{BLL}:
\begin{equation}
\mlabel{eq:spder}
(F+G)' = F'+G'\ \text{ and }\ (FG)' = F'G + FG'.
\end{equation}
Recall that~\cite[p.~125]{BLL} the derivation operator $\D$ defined on $\spe$ can be extended to
$\vir$ by additivity: For any virtual species $\Phi=\Phi^+ - \Phi^-$ in reduced form, one sets
$\Phi' = (\Phi^+)' - (\Phi^-)'$.  The resulting derivation $\D$ can then be extended further to
$\ratvir$. Since the properties~\meqref{eq:spder} are preserved, we have differential rings~$(\vir,
\D)$ and~$(\ratvir, \D)$ extending the differential semi\-ring~$(\spe, \D)$.

  We call a species $G$ \name{initialized} if $G[\emptyset] = \emptyset$ or,
equivalently, if $G_0 = 0$. If $F$ is another species, one may define~\cite[p.~41]{BLL} the
(partitional) \name{composition} or \name{substitution} $F\circ G$, also denoted by $F(G)$:
$$ (F\circ G)[X]:=\sum_{\pi \in \mathrm{Par}(X)} F[\pi]\times \prod_{p\in \pi} G[p],$$
where $\mathrm{Par}(X)$ denotes the set of partitions of $X$. 
The
singleton species $X$ is the neutral element for substitution so that $F = F\circ X = F(X)$ for all
species $F$. Moreover, the initialized species~$G$ are characterized by $G(0) = 0$, via substitution
of the empty species.

\begin{defn}
\begin{enumerate}
\item A \name{combinatorial differential constant} or briefly \name{differential constant} is a virtual species $K$ such that $K'= 0$. Elements in $\QQ$ are called \name{constants}.

\item  A \name{combinatorial differential tower} or briefly \name{differential tower} is a virtual species $T$ such that $T' = T$ and $T_0 = 1$.
\end{enumerate}\mlabel{defn:diffconst}
\end{defn}

Writing $\ker \dd$ for the set of all differential constants, we have $K\in \ker \dd$ if and only if
$K_n\in \ker \dd$ for all $n\geq 0$, where $K_n$ is the restriction of $K$ to cardinality $n$. It is
clear that we have $\QQ\subseteq \ker \dd$ in~$\ratvir$, with similar statements
for~$\NN \subseteq \spe$ and~$\ZZ \subseteq \vir$. There are plenty of \name{nonconstant
  differential constants}.
Indeed, one can show~\cite[p.125]{BLL} that for any $n \geq 0$, the species $X^n-n C_n$ is a
differential constant in~$\vir$, where $C_n$ is the $n$-cycle species. More generally, for any
nonnegative integers $k_i, m, n \geq 0$, we thus obtain
$$m+\sum_{i=1}^{n} k_i(X^i-i C_i)\in \ker \dd \subset \vir.$$

Differential towers are important tools for solving combinatorial differential equations.  There are
plenty of differential towers, the {most elementary} one being the \name{combinatorial exponential}
\begin{equation}
  \label{eq:comb-exp}
  E  = \sum_{n \ge 0} E_n = \sum_{n \ge 0} X^n/S_n \in \spe%
\end{equation}
and the most important for our subsequent development being the (canonical) \name{analytic   exponential}
\begin{equation}
  \label{eq:can-exp}
  e^X = \sum_{n \ge 0} X^n/n! \in \ratvir .
\end{equation}
Note that their generating series both yield the exponential function $E(x) = e^X(x) = e^x$.

\begin{remark}{\rm \mcite{Lab4}}
\begin{enumerate}
\item Every differential tower $T = T_0 + T_1 + \cdots+ T_n +\cdots$
is characterized by $T_0 = 1$ and $T_n'= T_{n-1}$ for all $n\geq 1$. \mlabel{it:d-1}

\item Every differential tower is of the form
$$T  = K E =(K_0  + K_1  + \cdots )(E_0  + E_1  +\cdots ),$$
where $K $ is a differential constant with $K_0=1$.  It follows that
\begin{equation}
T_{n}  = K_0  E_{n}  +  K_1  E_{n-1}  + \cdots +  K_n  E_{0}.
\mlabel{eq:decomt}
\end{equation}
Note that there is a bijective correspondence $\iota$ between differential towers $T$ and
differential constants $K$ with $K_0=1$. The map $\iota$ assigns to each $K$ the differential tower
$T = KE$. Its inverse is obtained by recursively solving ~(\mref{eq:decomt}) for $K_n E_0 =
K_n$. In the important special case $T = E$, the associated constant is of course $K = 1$.
\end{enumerate}
\mlabel{rem:cdtow}
\end{remark}

A virtual species $\Phi$ is called an \name{antiderivative} of a virtual species $\Psi$ if
$\Phi' = \Psi$.  There may be many antiderivatives of a virtual species~\cite[Theorem A]{Lab}.

\begin{defn}
  Let $T = T_0 + T_1 + \cdots+ T_n +\cdots $ be a differential tower. The virtual species
  \begin{equation}
    \jt \Phi:= T_1  \Phi  - T_2  \Phi'  + \cdots + (-1)^{n-1} T_n  \Phi^{(n-1)}  +\cdots = \sum_{i\geq 1} (-1)^{i-1}T_i \Phi^{(i-1)}
    \mlabel{eq:idt}
  \end{equation}
  is called the \name{Joyal integral} (with respect to $T$) of a virtual species $\Phi$. In
  particular, $\jint\Phi$ is called the \name{canonical Joyal integral} of $\Phi$.
\end{defn}

The following result from~\mcite{Lab4} shows that every antiderivative can be written as an
initialized Joyal integral plus a differential constant. (The case $T = E$ is treated in Proposition 13 of \cite[\S2.5]{BLL}.)

\begin{lemma}
  Let $T = T_0 + T_1 + \cdots+ T_n +\cdots $ be a differential tower. Then every antiderivative of a
  virtual species $\Phi$ can be written in the form
\begin{equation}
\Pi(\Phi) := K  + \jt \Phi,
\mlabel{eq:pi}
\end{equation}
where $K $ is a differential constant and $\jt \Phi$ is the Joyal integral~\meqref{eq:idt}.
Moreover, the latter is initialized so that $K(0) = \Pi(\Phi(0))$.  \mlabel{lem:allint}
\end{lemma}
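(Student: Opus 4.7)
The plan is to show directly that the Joyal integral $\jt\Phi$ is itself an antiderivative of $\Phi$, so that every other antiderivative differs from it only by a differential constant, and then to verify that $\jt\Phi$ is initialized so that the constant's initial value matches $\Pi(\Phi)(0)$.

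First I would check well-definedness: the formal series $\jt\Phi = \sum_{i\ge 1}(-1)^{i-1}T_i\Phi^{(i-1)}$ defines a virtual species because the family is summable---the cardinality-$i$ concentration of $T_i$ forces $(T_i\Phi^{(i-1)})[U] = \emptyset$ whenever $i > |U|$, so only finitely many summands contribute on any finite set $U$. Then I would differentiate term by term using the Leibniz rule~\meqref{eq:spder} together with the defining relation $T_i' = T_{i-1}$ from Remark~\mref{rem:cdtow}(a). The resulting expression splits into two series that telescope against each other after a single index shift, leaving only the $i=1$ contribution $T_0\Phi$, which equals $\Phi$ since $T_0 = 1$. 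This establishes $(\jt\Phi)' = \Phi$.

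Given any antiderivative $\Psi$ of $\Phi$, the difference $K := \Psi - \jt\Phi$ satisfies $K' = \Phi - \Phi = 0$ and is therefore a differential constant, yielding the claimed decomposition $\Psi = K + \jt\Phi = \Pi(\Phi)$. To verify that $\jt\Phi$ is initialized, I would examine the cardinality-$0$ part of each summand: since $T_i$ is concentrated in cardinality $i\ge 1$, one has $(T_i)_0 = 0$, and hence $(T_i\Phi^{(i-1)})_0 = (T_i)_0\,(\Phi^{(i-1)})_0 = 0$. Summing over $i$ gives $(\jt\Phi)_0 = 0$, equivalently $(\jt\Phi)(0) = 0$. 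Applying the evaluation-at-zero map $G\mapsto G(0)$ to $\Pi(\Phi) = K + \jt\Phi$ then yields $K(0) = \Pi(\Phi)(0)$.

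The only real obstacle is bookkeeping in the telescoping derivative computation: one has to reindex one of the two resulting series carefully to see that consecutive terms cancel with opposite signs. Once this is laid out and summability is confirmed, everything reduces to the single substitutions $T_0 = 1$ (for the surviving term) and $(T_i)_0 = 0$ for $i\ge 1$ (for the initialization).
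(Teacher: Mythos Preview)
Your argument is correct. The paper itself does not supply a proof of this lemma; it quotes the result from Labelle~\mcite{Lab4} (with the special case $T=E$ attributed to~\cite[\S2.5]{BLL}) and uses it as a black box. Your direct verification---checking summability, differentiating termwise via the Leibniz rule and $T_i'=T_{i-1}$, and observing the telescoping cancellation that leaves $T_0\Phi=\Phi$---is exactly the natural argument one would expect, and your treatment of the initialization via $(T_i)_0=0$ for $i\ge1$ is clean. There is nothing to compare against in the paper, but what you have written is a complete and standard proof of the cited statement.
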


\begin{defn}
  The {\bf generating series} of a species of structures $F$ is the formal power series
  $$F(x)=\sum_{n=0}^\infty f_n \frac{x^n}{n!} \in \QQ[[x]],$$
  where $f_n = |F[n]|$ is the number of $F$-structures on a set of $n$ elements.
\end{defn}

\begin{exam}\mlabel{ex:gs}
  Referring to the species operations described earlier, we have the generating series
  \begin{eqnarray*}
    X(x)&=&x\\
    (F+G)(x)&=&F(x)+G(x),\\
    (FG)(x)&=&F(x) \, G(x),\\
    (F\circ G)(x)&=&F(G(x)),\\
    F'(x)&=&\frac{d}{dx} \, F(x),
  \end{eqnarray*}
  to be found in Ex. 2(g) and Propositions~1.3.3(a), 1.3.8(a), 1.4.2(a), 1.4.8(a) of~\cite{BLL}.
\end{exam}

\begin{prop}\mlabel{prop:gsj}
  Let $T = T_0 + T_1 + \cdots+ T_n +\cdots $ be a differential tower and $\Phi$ a virtual
  species. Then the generating series of the Joyal integral is given by
  \begin{equation}
    (\jt \Phi)(x)= \sum_{i\geq 1} (-1)^{i-1}T_i(x) \, \Phi(x)^{(i-1)},
    \mlabel{eq:gsj}
  \end{equation}
  and we have $\frac{d}{dx}(\jt \Phi)(x)=\Phi(x)$.
\end{prop}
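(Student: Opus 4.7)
The plan is to apply the generating series map term by term to the definition~\meqref{eq:idt} of the Joyal integral, using the ring-homomorphism properties collected in Example~\mref{ex:gs}, and then to differentiate the resulting power series by invoking the defining property $T_i'=T_{i-1}$ of a differential tower from Remark~\mref{rem:cdtow}(a).

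First I would verify that the family $\{T_i\,\Phi^{(i-1)}\}_{i\ge 1}$ is summable on the species side: since $T_i$ is the cardinality-$i$ restriction of $T$, on any finite set $U$ only the indices $i\le |U|$ can contribute nonempty $(T_i\Phi^{(i-1)})$-structures. Correspondingly, the power series $T_i(x)$ has vanishing coefficients through degree $i-1$, so $\sum_{i\ge1}(-1)^{i-1}T_i(x)\,\Phi(x)^{(i-1)}$ is a well-defined element of~$\QQ[[x]]$. Applying the identities $(F+G)(x)=F(x)+G(x)$, $(FG)(x)=F(x)G(x)$ and $F'(x)=\frac{d}{dx}F(x)$ from Example~\mref{ex:gs} (extended $\ZZ$-linearly to virtual species) then yields~\meqref{eq:gsj} by a term-by-term comparison of the two summable families.

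Next, differentiating~\meqref{eq:gsj} using the Leibniz rule produces
\begin{equation*}
\frac{d}{dx}(\jt\Phi)(x)=\sum_{i\ge1}(-1)^{i-1}T_i'(x)\,\Phi(x)^{(i-1)}+\sum_{i\ge1}(-1)^{i-1}T_i(x)\,\Phi(x)^{(i)}.
\end{equation*}
The differential-tower property $T_i'=T_{i-1}$ translates, via the generating series homomorphism, to $T_i'(x)=T_{i-1}(x)$. Reindexing the first sum by $j=i-1$ and the second by $j=i$, the two series telescope, leaving only the $j=0$ contribution of the first sum, namely $T_0(x)\,\Phi(x)$. Since $T_0=1$ (again by Remark~\mref{rem:cdtow}(a)), we obtain $T_0(x)=1$, yielding the claimed identity $\frac{d}{dx}(\jt\Phi)(x)=\Phi(x)$.

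The only potentially delicate point is the justification of term-by-term differentiation of an infinite series in $\QQ[[x]]$, but this is again guaranteed by the degree bound $\operatorname{ord}T_i(x)\ge i$, which forces each coefficient of $\frac{d}{dx}(\jt\Phi)(x)$ to be determined by a finite partial sum. I expect no genuine obstacle beyond careful bookkeeping of the reindexing.
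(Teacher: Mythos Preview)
Your proof is correct and follows essentially the same route as the paper: apply the generating-series identities of Example~\mref{ex:gs} term by term to the defining sum~\meqref{eq:idt}. The paper's proof is a single sentence citing precisely those two ingredients; your version simply spells out the summability check and, for the second claim, carries out explicitly the telescoping computation (via $T_i'(x)=T_{i-1}(x)$ and $T_0(x)=1$) that the paper leaves implicit.
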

\begin{proof}
  This follows from ~\meqref{eq:idt} and Example~\mref{ex:gs}.
\end{proof}

\subsection{The integro-differential ring on set species}
\mlabel{ss:virtsp}

In this subsection, we establish an integro-differential ring structure on virtual species. Generalizing the algebraic abstraction of the derivation to the difference quotient whose
limit value is the derivation, a differential ring with weight was introduced in~\mcite{GK}. Rota-Baxter operators with weight originated from the probability study of G. Baxter
in 1960~\mcite{B}.

\begin{defn} \mlabel{de:intdiff} Let $\lambda\in \QQ$ be fixed.
  \begin{enumerate}
  \item A \name{differential ring of weight $\lambda$} is a ring $R$ together with an additive
    operator $\D \colon R\to R$ that satisfies $\D(1)=0$ and the \name{Leibniz rule}
    \begin{equation}
      \D xy = (\D x) y + x \D y + \lambda (\D x) (\D y) \quad\text{ for all }\, x,y\in R.
      \mlabel{eq:diffl}
    \end{equation}
    Such an operator is called a \name{derivation of weight $\lambda$} or a
    \name{$\lambda$-derivation}. We also use the common abbreviation $x' := \D x$.
  \item A \name{Rota-Baxter ring of weight $\lambda$} is a ring $R$ with an additive operator $\cum$
    on $R$ that satisfies the \name{Rota-Baxter axiom}
    \begin{equation}
      (\cum x) (\cum y) = \cum  x \cum y + \cum y \cum x   +\lambda \cum xy \quad\text{ for all }\, x,y\in R.
      \mlabel{eq:rba}
    \end{equation}
    Such an operator is called a \name{Rota-Baxter operator of weight $\lambda$} or a
    \name{$\lambda$-Rota-Baxter operator}.
  \item An \name{integro-differential ring of weight $\lambda$} is a differential ring $(R,\D)$ with
    an additive operator $\cum \colon R \to R$ that satisfies the \name{section axiom}
    $ \partial \cum = \id_R$ and the \name{hybrid Rota-Baxter axiom}
    \begin{equation}
      \cum x' \cum y = x \cum y - \cum xy - \lambda \cum x' y \quad\text{ for all }\,  x,y\in R.
      \mlabel{eq:ibpl}
    \end{equation}
    We call the two projections
    \begin{equation}
      \J:=\cum   \D \quad\text{and}\quad \E:=\id_{R}- \J
      \mlabel{eq:jd}
    \end{equation}
    the \name{initialization} and \name{evaluation}, respectively.
  \item A \name{differential Rota-Baxter ring of weight $\lambda$} is a differential ring $(R,\D)$
    of weight $\lambda$ with a Rota-Baxter operator $\cum$ of weight $\lambda$ such that
    $\D \cum=\id_R$.
  \end{enumerate}
\end{defn}

Here we have used \name{operator notation} for the derivation and Rota-Baxter operator, meaning
$\D x$ and $\cum x$ are shorthand for $\D(x)$ and $\cum(x)$, respectively.  We have moreover applied
the following convention for saving parentheses: Multiplication has precedence over integration, so
$\D xy$ and $\cum x\cum y$ are to be parsed as $\D(xy)$ and $\cum(x \cum y)$ , respectively. For
notational clarity, we shall also abbreviate the $i$-th derivative $\D^i x$ as $x^{(i)}$, for any
$i\geq 0$.

Note that we require that a derivation $\D$ satisfies $\D(1) = 0$, which automatically follows from
~(\mref{eq:diffl}) when $\lambda = 0$, but is a non-trivial restriction when $\lambda\neq
0$. Note that whenever the weight is not mentioned for the notions in Definition~\mref{de:intdiff}, it is
assumed to be zero.

Both the Rota-Baxter axiom~(\mref{eq:rba}) and the hybrid Rota-Baxter axiom~(\mref{eq:ibpl}) are
abstracted from integration by parts, but the latter is stronger than the former in the following
sense\cite[Proposition~9]{RRTB}.

\begin{lemma}
  Let $(R, \D, \cum)$ be an integro-differential ring of weight $\lambda$. Then $(R, \cum)$ is a
  Rota-Baxter ring of weight $\lambda$ and thus $(R,\D,\cum)$ a differential Rota-Baxter ring of
  weight $\lambda$.
  \mlabel{lem:intrb}
\end{lemma}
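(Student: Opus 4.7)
The plan is to derive the Rota-Baxter axiom of weight $\lambda$ directly from the hybrid Rota-Baxter axiom by exploiting the section axiom $\D \cum = \id_R$. The key observation is that every element of the form $\cum a$ has derivative equal to $a$, so substituting such elements into the hybrid axiom eliminates the explicit derivatives and yields a pure integral identity.

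Concretely, I would proceed as follows. Fix arbitrary $a, y \in R$ and apply the hybrid Rota-Baxter axiom
\[
\cum x' \cum y \;=\; x \cum y - \cum xy - \lambda \cum x' y
\]
with the substitution $x := \cum a$. Since $\D \cum = \id_R$, we have $x' = \D \cum a = a$, so the identity reduces to
\[
\cum a \cum y \;=\; (\cum a)\cum y - \cum\bigl((\cum a)\, y\bigr) - \lambda \cum(ay).
\]
Rearranging and invoking the commutativity of $R$ (so that $(\cum a)\,y = y\,(\cum a)$) gives
\[
(\cum a)(\cum y) \;=\; \cum\bigl(a \cum y\bigr) + \cum\bigl(y \cum a\bigr) + \lambda \cum(ay),
\]
which is exactly the Rota-Baxter axiom~\meqref{eq:rba} of weight $\lambda$ for the pair $(a,y)$. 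Hence $(R,\cum)$ is a Rota-Baxter ring of weight $\lambda$.

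For the second assertion, I would simply note that the required pieces of a differential Rota-Baxter ring of weight $\lambda$ are already in place: the derivation $\D$ is of weight $\lambda$ by hypothesis, the operator $\cum$ has just been shown to satisfy the weight-$\lambda$ Rota-Baxter axiom, and the compatibility $\D \cum = \id_R$ is part of the definition of an integro-differential ring. I do not anticipate any real obstacle here; the only subtlety is recognizing that the hybrid axiom is genuinely stronger than the Rota-Baxter axiom precisely because the section axiom lets one ``integrate away'' the $x'$ in it by specializing to $x = \cum a$.
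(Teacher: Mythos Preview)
Your proof is correct and is exactly the standard argument: substitute $x = \cum a$ into the hybrid Rota-Baxter axiom, use the section axiom to replace $x'$ by $a$, and rearrange (using commutativity) to obtain the weight-$\lambda$ Rota-Baxter identity. The paper itself does not supply a proof of this lemma but merely cites \cite[Proposition~9]{RRTB}; your argument is essentially what one finds there.
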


The following result gives equivalent characterizations of the hybrid Rota-Baxter axiom in
integro-differential rings~\cite[Theorem~2.5]{GRR}.

\begin{lemma}
  Let $(R,\D)$ be a differential ring of weight $\lambda$ with an additive operator $\cum: R \to R$
  such that $\D \cum = \id_R$.  Then with the notions in ~\meqref{eq:jd}, the following
  statements are equivalent (for $x, y \in R$):
  \begin{enumerate}
  \item We have an integro-differential ring $(R,\D,\cum)$.
  \item Evaluation is multiplicative, $\E(xy)= \E(x) \E(y)$. \mlabel{it:exy}
  \item Initialization is a weight $-1$ differential operator,
    $\J(x) \J(y) + \J(xy)= \J(x) y + x \J(y)$.
  \end{enumerate}
  \mlabel{lem:eqid}
\end{lemma}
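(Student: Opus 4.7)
\noindent\textbf{Proof plan for Lemma \ref{lem:eqid}.} My plan is to prove the cycle (a)~$\Rightarrow$~(b)~$\Leftrightarrow$~(c)~$\Rightarrow$~(a), with the central step being a purely algebraic identity that handles (b)~$\Leftrightarrow$~(c) in one stroke.

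\medskip

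\noindent\textbf{Step 1 (the core identity).} I would first observe that by definition $\J = \id_R - \E$, so $\J(z) = z - \E(z)$ for every $z \in R$. Substituting this into $\J(x)\J(y) + \J(xy) - \J(x)y - x\J(y)$ and expanding, all terms not involving $\E$ cancel, yielding the purely algebraic identity
\begin{equation*}
  \J(x)\J(y) + \J(xy) - \J(x)\,y - x\,\J(y) \;=\; \E(x)\E(y) - \E(xy).
\end{equation*}
No axioms beyond the definitions $\J := \cum\D$ and $\E := \id_R - \J$ are used; in particular $\D\cum = \id_R$ is not needed here. This identity immediately establishes (b)~$\Leftrightarrow$~(c).

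\medskip

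\noindent\textbf{Step 2 ((c)~$\Rightarrow$~(a)).} I would substitute $y \mapsto \cum w$ (for an arbitrary $w \in R$) in the identity of (c). Since $\D\cum = \id_R$ gives $\J(\cum w) = \cum(\cum w)' = \cum w$, and the weight-$\lambda$ Leibniz rule yields $(x\cum w)' = x'\cum w + xw + \lambda x'w$, the substitution produces
\begin{equation*}
  \cum(x')\cum(w) + \cum\bigl(x'\cum w\bigr) + \cum(xw) + \lambda\cum(x'w)
  \;=\; \cum(x')\cum(w) + x\cum(w).
\end{equation*}
Cancelling $\cum(x')\cum(w)$ and rearranging yields exactly the hybrid Rota-Baxter axiom~\meqref{eq:ibpl} (with $w$ relabelled as $y$), so (a) holds.

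\medskip

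\noindent\textbf{Step 3 ((a)~$\Rightarrow$~(b)).} Assuming the hybrid axiom, I would first derive the Rota-Baxter axiom of weight~$\lambda$ by substituting $x \mapsto \cum z$ in the hybrid axiom; then $x' = z$ by $\D\cum = \id_R$, and the identity rearranges to the Rota-Baxter axiom for $\cum z, \cum y$ (this is essentially Lemma~\mref{lem:intrb}). Next I compute $\E(x)\E(y) = xy - x\cum(y') - y\cum(x') + \cum(x')\cum(y')$ and use the Rota-Baxter axiom to expand
\begin{equation*}
  \cum(x')\cum(y') \;=\; \cum\bigl(x'\cum y'\bigr) + \cum\bigl(y'\cum x'\bigr) + \lambda\cum(x'y').
\end{equation*}
Applying the hybrid axiom to each of the two triple-integrals (once directly, once after swapping $x$ and $y$) and using commutativity of $R$, everything collapses to $\E(x)\E(y) = xy - \cum(x'y) - \cum(xy') - \lambda\cum(x'y')$, which is precisely $\E(xy)$ via the Leibniz rule for $(xy)'$.

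\medskip

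\noindent\textbf{Main obstacle.} The only real care needed is notational: the convention that multiplication binds tighter than $\cum$ means that $\cum x'\cum y$ in~\meqref{eq:ibpl} denotes $\cum(x'\cdot\cum y)$, not a product of two integrals. Keeping this bracketing correct throughout Step~3, and cleanly separating the application of the (derived) Rota-Baxter axiom from the two applications of the hybrid axiom, is the place where an error is most likely to creep in. Beyond that, all three implications are direct computations.
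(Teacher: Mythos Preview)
Your proof is correct in every step; the identity in Step~1 is immediate, the substitution in Step~2 cleanly recovers the hybrid axiom, and the computation in Step~3 closes the cycle (the $\lambda$-terms cancel exactly as you indicate once commutativity is used). There is nothing to compare against, however: the paper does not prove this lemma but simply quotes it as \cite[Theorem~2.5]{GRR}. Your argument is essentially the one found there, so you have reconstructed the cited proof.
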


Recall that in analysis, the classical antiderivative operators
$\cum_{\kern-2.5pt a}\colon C^\infty(\RR) \to C^\infty(\RR)$ are given by
$(\cum_{\kern-2.5pt a} \, f)(x) = \cum_{\kern-2pt a}^x \, f(\xi) \, d\xi$, parametrized by the
initialization point $a \in \RR$. In a similar way, the family of Joyal integrals ~\eqref{eq:idt}
is parametrized by the differential tower~$T$. For obtaining an integro-differential structure,
however, these differential towers turn out to be highly constrained because of the multiplicativity
requirement for the associated evaluation (see Theorem~\mref{thm:intd} below).

\begin{defn}
A species  $F=\sum_{n\geq 0} F_n$ is called an {\bf analytic exponential} if
\begin{equation}
	F_p F_q = {p+q \choose p} F_{p+q}
\mlabel{eq:binom}
\end{equation}
for all $p, q \geq 0$.
\mlabel{defn:binom}
\end{defn}

The term analytic exponential is justified because the following characterization shows that it
comprises only minor variations---essentially just a change of base---of the canonical
exponential~\meqref{eq:can-exp}. It is important to distinguish these analytic exponentials from the
combinatorial exponential~\meqref{eq:comb-exp}, as these species are \emph{not}
isomorphic~\cite[(1.32)]{Lab5}.

\begin{lemma}\mlabel{lem:exps}
  A species~$F$ is an analytic exponential if and only if $F = e^{F_1}$. Hence the only analytic exponential species are
  $e^{\lambda X}$ for $\lambda \in \ZZ \setminus \{ 0 \}$.
\end{lemma}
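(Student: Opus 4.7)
The plan is to extract the structure component by component from the binomial condition~\meqref{eq:binom} and then identify $F_1$ with an integer multiple of the singleton species $X$.

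First I would set $p = q = 0$ in~\meqref{eq:binom} to obtain the idempotent relation $F_0^2 = F_0$. Since $\vir$ is an integral domain (Lemma~\mref{lem:virr}), its only idempotents are $0$ and $1$. Taking $p=0$ in~\meqref{eq:binom} gives $F_0 F_n = F_n$, so the branch $F_0 = 0$ forces $F = 0$, which is degenerate; hence $F_0 = 1$. I would then prove by induction on $n$ that $F_n = F_1^n/n!$ in $\ratvir$. The base $n=0$ is $F_0 = 1$, and the step follows by setting $p=1, q=n$ in~\meqref{eq:binom} to get $F_1 F_n = (n+1) F_{n+1}$, whence $F_{n+1} = F_1 F_n/(n+1) = F_1^{n+1}/(n+1)!$ by the induction hypothesis. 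Summing the canonical decomposition~\meqref{eq:cano} yields
\[
F = \sum_{n \geq 0} F_n = \sum_{n \geq 0} \frac{F_1^n}{n!} = e^{F_1},
\]
with the exponential interpreted via the series in~\meqref{eq:can-exp}, substituting $F_1$ for $X$.

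To conclude, I would observe that $F_1$ is concentrated on singletons and is therefore determined by the finite set $F_1[\{\ast\}]$ with its trivial $S_1$-action, so $F_1 \cong cX$ with $c = |F_1[\{\ast\}]| \in \NN$; passing to $\vir$ permits any $c \in \ZZ$. Setting $\lambda := c$ then gives $F = e^{\lambda X}$, with $\lambda = 0$ collapsing to $F = 1$ (excluded from the statement as trivial). The converse is a direct computation: for $F = e^{\lambda X}$ the components $F_n = \lambda^n X^n/n!$ satisfy
\[
F_p F_q = \frac{\lambda^{p+q} X^{p+q}}{p!\, q!} = \binom{p+q}{p} \frac{\lambda^{p+q} X^{p+q}}{(p+q)!} = \binom{p+q}{p} F_{p+q},
\]
as required.

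The main obstacle is essentially bookkeeping rather than conceptual: one must work in $\ratvir$ so that divisions by $n!$ are legitimate, and recognize that a species concentrated on singletons is necessarily an integer multiple of $X$. Neither step is deep, but they must both be handled carefully within the framework of (rational or virtual) species.
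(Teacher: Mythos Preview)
Your proposal is correct and follows essentially the same route as the paper: derive $F_n = F_1^n/n!$ from the recursion $F_1 F_{n-1} = n F_n$ implied by~\meqref{eq:binom}, verify the converse by direct computation, and then identify $F_1$ as an integer multiple of $X$ using that $X$ is the only molecular species in cardinality~$1$. Your explicit treatment of $F_0 = 1$ via the idempotent argument in the integral domain $\vir$ is a small improvement---the paper simply asserts $F_n = F_1^n/n!$ for $n \ge 1$ without isolating the $F_0$ step---but the overall strategy is the same.
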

\begin{proof}
  We start by proving the equivalence, so assume first that $F=\sum_{n\geq 0} F_n$ is an analytic
  exponential. Then~\meqref{eq:binom} implies $nF_n=F_1F_{n-1}$ and thus $F_n=\tfrac{F_1^n}{n!}$ for
  any $n\geq 1$, which is equivalent to $F = e^{F_1}$. Conversely, $F_n=\tfrac{F_1^n}{n!}$ gives
  $$F_pF_q=\frac{F_1^p}{p!}\frac{F_1^q}{q!}=\frac{F_1^{p+q}}{p!q!}=\frac{(p+q)!}{p!q!}\frac{F_1^{p+q}}{(p+q)!}={p+q
    \choose p} F_{p+q}, \quad p, q\geq 0,$$
  showing that~$F$ is an analytic exponential.

  But $F_1$ is molecular in cardinality~$1$, so it must be a $\ZZ$-linear combination of molecular
  species in cardinality~$1$ by~\cite[Proposition~2.6.6]{BLL}. But from~\cite[\S2.6, (16)]{BLL} we know
  that the only molecular species in cardinality~$1$ is~$E_1 = X$ so that~$F_1 = \lambda X$ for some
  nonzero $\lambda \in \ZZ$.
\end{proof}

We note the following intuitive characterization of analytic exponentials, which is reminiscent but
distinct from the well-known exponential functional equation
\begin{equation*}
  \label{eq:1}
  F(X+Y) = F(X) \, F(Y),
\end{equation*}
which holds \emph{both} for the combinatorial exponential~\meqref{eq:comb-exp} and for the canonical
analytic exponential~\meqref{eq:can-exp}, as noted in~\cite[(2.16)]{Lab5}.

\begin{prop}
  A species $F$ is an analytic exponential  if and only if 
  \begin{equation*}
    F[U+V] = F[U] \times F[V],
    \mlabel{eq:sumprset}
  \end{equation*}
  for all finite sets $U$ and $V$.
  \mlabel{prop:eqbin}
\end{prop}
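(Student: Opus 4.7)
The plan is to handle the two directions separately, invoking Lemma~\mref{lem:exps} for the forward direction.

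For the direction $(\Leftarrow)$, I would unwind the definition of species multiplication. Given the hypothesis $F[U+V] = F[U] \times F[V]$, for any finite set $W$ with $|W| = p+q$,
\[
(F_p F_q)[W] = \bigsqcup_{\substack{A+B = W \\ |A| = p,\, |B| = q}} F[A] \times F[B] = \bigsqcup_{\substack{A+B = W \\ |A| = p,\, |B| = q}} F[W] = \binom{p+q}{p} F[W],
\]
where the middle equality applies the hypothesis to each of the $\binom{p+q}{p}$ summands. Both sides vanish when $|W| \neq p+q$, so this yields $F_p F_q = \binom{p+q}{p} F_{p+q}$, with naturality inherited from that of the hypothesized family of bijections.

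For the direction $(\Rightarrow)$, I would first apply Lemma~\mref{lem:exps} to reduce to the case $F = e^{\lambda X}$ for some $\lambda \in \ZZ$. The required identity is then verified directly: for $\lambda \geq 0$, one may realize $F$ concretely as the set species $U \mapsto [\lambda]^U$ of functions from $U$ into a fixed $\lambda$-element set, and the evident factorization $[\lambda]^{U+V} = [\lambda]^U \times [\lambda]^V$ produces a natural bijection $F[U+V] \cong F[U] \times F[V]$. For general $\lambda \in \ZZ$, the same factorization is performed at the formal level in the rational species ring $\ratvir$, reflecting the ring-theoretic identity $e^{\lambda(X+Y)} = e^{\lambda X} \cdot e^{\lambda Y}$.

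The main technical obstacle I anticipate is the $\lambda < 0$ case, where $F[U]$ is not a concrete finite set but a formal element of $\ratvir$, so the factorization must be verified at the level of equivalence classes. More broadly, one must be careful to interpret naturality in $F[U+V] = F[U] \times F[V]$ with respect to bijections in each factor, $\sigma\colon U \to U'$ and $\tau\colon V \to V'$, rather than arbitrary bijections of $U+V$; the latter would impose strictly stronger constraints and would already pin down, via group-action arguments on the summands versus copies of $F[U+V]$, the compatibility encoded in the exponential structure.
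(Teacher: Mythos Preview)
Your proposal is correct, and the $(\Leftarrow)$ direction is actually cleaner than the paper's. You verify the defining relation $F_pF_q = \binom{p+q}{p}F_{p+q}$ directly by unwinding the product: each of the $\binom{p+q}{p}$ summands $F[A]\times F[B]$ in $(F_pF_q)[W]$ collapses to $F[W]$ via the hypothesis. The paper instead goes through Lemma~\mref{lem:exps}, proving $F_n = F_1^n/n!$ by induction on~$n$ (base case $F_0=1$ from $F[\emptyset+\emptyset]=F[\emptyset]\times F[\emptyset]$, inductive step by expanding $F[U+V]$ with $|U|=p$, $|V|=q$). Your route avoids the induction and the detour through Lemma~\mref{lem:exps} entirely for this direction.

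For $(\Rightarrow)$, both you and the paper invoke Lemma~\mref{lem:exps} to reduce to $F=e^{\lambda X}$. Your realization of $e^{\lambda X}$ as $U\mapsto [\lambda]^U$ for $\lambda\ge 0$ is a nice concrete model and makes the factorization transparent. The paper does not split into cases; it computes $e^{\lambda X}[U+V]=\lambda^{|U+V|}[\ast_1]^{|U+V|}$ uniformly by expanding $\tfrac{\lambda^n X^n}{n!}[U+V]$, treating the rational scalar formally. Your concern about $\lambda<0$ is legitimate---one must interpret $F[U]$ as an element of $\ratvir$ rather than a genuine finite set---but the paper's computation is equally formal on this point, and your suggestion to carry out the factorization at the level of $\ratvir$ is exactly what the paper does implicitly. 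Your closing remark about naturality in each factor separately is a worthwhile caveat that the paper leaves tacit.
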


\begin{proof}
Denote $n=|U+V|$, $p=|U|$ and $q=|V|$. Notice that  for some nonzero~$\lambda \in \ZZ$, we have
$$e^{\lambda X}[U+V]= \frac{\lambda^n X^n}{n!}[U+V]
            = \frac{\lambda^n}{n!}\Big(\sum_{\ast_1+\cdots+\ast_n=U+V\atop \ast_i\neq\emptyset}X[\ast_1]\times \cdots\times X[\ast_n]\Big)
            = \frac{\lambda^n}{n!}\Big(n![\ast_1]^{n}\Big)=\lambda^n [\ast_1]^{n},$$
and so
$$e^{\lambda X}[U]=\lambda^p [\ast_1]^{p}, \quad e^{\lambda X}[V]=\lambda^q [\ast_1]^{q}.$$

Suppose $F$ is an analytic exponential.   It follows from   Lemma~\mref{lem:exps} that
$$F[U+V]=e^{\lambda X}[U+V]=\lambda^n [\ast_1]^{n}=\lambda^p [\ast_1]^{p}\times\lambda^q [\ast_1]^{q}=e^{\lambda X}[U]\times e^{\lambda X}[V]=F[U]\times F[V].$$

Conversely, by Lemma~\mref{lem:exps}, it suffices to prove the analytic exponential relation $F_n = F_1^n/n!$
for all $n \ge 0$. We work by induction on $n$.  For $U=V=\emptyset$, we have
$$F_0[\emptyset]=F[U+V]=F[U]\times F[V]=F_0[\emptyset]\times F_0[\emptyset]$$ and so
$F_0[\emptyset]=\{\emptyset\}.$ Thus $F_0=1$, that is, the result holds for $n=0$. We next show that
the analytic exponential relation for $n-1$ implies the relation for $n$.  Let $U+V$ be a set with $|U|=p$ and $|V|=q$.
By the induction hypothesis, we have
{\small\begin{eqnarray*}
&&F_{n}[U+V] =F[U+V]=F[U]\times F[V] =F_{p}[U]\times F_{q}[V]\\
&=&\frac{F_1^{{p}}}{{p}!}[U]\times \frac{F_1^{{q}}}{{q}!}[V]\\
&=&\Big(\sum_{\ast_1+\ast_2+\cdots+\ast_{{p}}=U\atop
\ast_i\neq\emptyset}\frac{1}{{p}!}F_1[\ast_1]\times
F_1[\ast_2]\times\cdots\times
F_1[\ast_{{p}}]\Big)
\times\Big(\sum_{\ast_1+\ast_2+\cdots+\ast_{{q}}=V\atop \ast_i\neq\emptyset}\frac{1}{{q}!}F_1[\ast_1]\times F_1[\ast_2]\times\cdots\times F_1[\ast_{{q}}]\Big)\\
&=& F_1[\ast_1]^{{n}}\\
&=& \frac{1}{{n}!}\Big(\sum_{\ast_1+\ast_2+\cdots+\ast_{{n}}=U+V\atop \ast_i\neq\emptyset}F_1[\ast_1]\times F_1[\ast_2]\times\cdots\times F_1[\ast_{{n}}]\Big)\\
&=& \frac{F_1^{{n}}}{{n}!}[U+V],
\end{eqnarray*}}
as required.
\end{proof}

We shall now show that the {\em only} differential tower $T$ yielding an integro-differential
structure on~$(\ratvir, \D)$ is the canonical analytic exponential $T = e^X$, whereas the
combinatorial exponential $T = E$ associated to the canonical Joyal integral does \emph{not} lead to
an integro-differential ring. As a consequence, the differential constant associated to the
``analytic integral operator'' is not $K=1$ as for the combinatorial exponential, but
$K = e^{X-\hat{X}}$ with $\hat{X}$ being the \name{species of pseudo-singletons} whose
characteristic property~\cite[(2.7)]{Lab5} is the combinatorial-analytic linking relation
$E = e^{\hat{X}}$.

\begin{theorem}
Let $T$ be a differential tower. The triple $(\ratvir, \D, \jt)$ is an integro-differential ring if and only if $T=e^X$.
The corresponding evaluation is given by
  \begin{equation}
    \E(\Phi) = \sum_{n\geq 0} \frac{(-X)^{n}}{n!} \, \Phi^{(n)}
    \mlabel{eq:eval}
  \end{equation}
  for all $\Phi \in \ratvir$.
  \mlabel{thm:intd}
\end{theorem}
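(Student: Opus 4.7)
The plan is to invoke Lemma~\mref{lem:eqid}, which reduces the integro-differential property of~$(\ratvir, \D, \jt)$ to verifying two things: the section axiom $\D \jt = \id_{\ratvir}$ and the multiplicativity of the evaluation $\E = \id_{\ratvir} - \jt\D$. The first holds for every differential tower~$T$ by Lemma~\mref{lem:allint}, so the theorem reduces to characterizing those~$T$ for which $\E$ is multiplicative. A direct expansion of $\E(\Phi) = \Phi - \jt \Phi'$ via~\meqref{eq:idt} gives
\[
\E(\Phi) \;=\; \sum_{i\geq 0} (-1)^i \, T_i \, \Phi^{(i)} \qquad (\text{with the convention } T_0 = 1).
\]

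For the sufficiency direction, substituting $T_i = X^i/i!$ coming from $T = e^X$ yields the formula~\meqref{eq:eval} at once. Multiplicativity then follows from the Leibniz rule for higher derivatives: using $\binom{k+m}{k}/(k+m)! = 1/(k!\, m!)$, the Cauchy product $\E(\Phi)\E(\Psi) = \sum_{k,m} (-X)^{k+m}/(k!\, m!) \cdot \Phi^{(k)}\Psi^{(m)}$ regroups into $\sum_n (-X)^n/n! \cdot (\Phi\Psi)^{(n)} = \E(\Phi\Psi)$.

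For the necessity direction, assume $(\ratvir, \D, \jt)$ is integro-differential. Writing the defect as
\[
\E(\Phi\Psi) - \E(\Phi)\E(\Psi) \;=\; \sum_{k,m \geq 0} (-1)^{k+m} \, D_{k,m} \, \Phi^{(k)} \Psi^{(m)}, \qquad D_{k,m} := \tbinom{k+m}{k} T_{k+m} - T_k T_m,
\]
the cases $k = 0$ or $m = 0$ force $D_{k,m} = 0$ automatically via $T_0 = 1$. The remaining cases follow by induction on $k + m$, using the test pair $\Phi = X^k/k!$ and $\Psi = X^m/m!$: among indices $(i,j)$ with $i \leq k$ and $j \leq m$ (those producing nonvanishing derivatives), only $(k,m)$ attains $i + j = k + m$, so once the lower-order $D_{i,j}$ have vanished by hypothesis, only the surviving term $(-1)^{k+m} D_{k,m} \cdot 1 \cdot 1 = 0$ remains, forcing $D_{k,m} = 0$. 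Hence~$T$ satisfies the analytic exponential relation of Definition~\mref{defn:binom}, so Lemma~\mref{lem:exps} gives $T = e^{\lambda X}$ for some scalar~$\lambda$, and the differential tower requirement $T' = T$ combined with $(e^{\lambda X})' = \lambda\, e^{\lambda X}$ then pins down $\lambda = 1$.

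The main obstacle I anticipate is justifying the coefficient extraction in the inductive step above: one needs to argue that the relation $D_{k,m} = 0$ can genuinely be isolated from a seemingly entangled double sum of derivative-pair monomials. The choice of test species $X^k/k!$ and $X^m/m!$ is what makes the argument go through cleanly, because their derivatives of total order at least~$k+m$ contribute exactly one nonzero product $\Phi^{(k)}\Psi^{(m)} = 1$; verifying this isolation carefully is the key technical point of the proof.
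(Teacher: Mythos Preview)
Your proof is correct and follows essentially the same route as the paper's: both reduce via Lemma~\mref{lem:eqid} to multiplicativity of $\E$, expand $\E(\Phi) = \sum_{i\geq 0} (-1)^i T_i \Phi^{(i)}$, verify multiplicativity for $T = e^X$ via the higher Leibniz rule, and for the converse test on monomial species $X^p, X^q$ to force the analytic exponential relation $T_p T_q = \binom{p+q}{p} T_{p+q}$ and hence (by Lemma~\mref{lem:exps} plus the tower condition) $T = e^X$. The only organizational difference is in the necessity direction: the paper isolates the relation in one stroke as a ``leading-term equality'' via uniqueness of molecular decomposition, whereas your explicit induction on $k+m$ makes transparent why the lower-order contributions $D_{i,j}$ with $i+j < k+m$ drop out---this is arguably cleaner, since it avoids appealing to the molecular basis and answers precisely the isolation concern you flagged at the end.
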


\begin{proof}
First by ~\meqref{eq:spder}, the pair $(\ratvir,\D)$ is a differential ring.

We next show that for arbitrary differential
  towers~$T$, we obtain an integro-differential ring $(\ratvir, \D, \jt)$
  if and only if ~$T$ is exponential. So assume first that~$T$ is an analytic exponential so that~$T(X) = e^{\lambda
    X}$ for some nonzero~$\lambda \in \QQ$. Since~$T$ is a differential tower we must have~$T_1' =
  (\lambda X)' = \lambda = 1$, we have indeed the analytic exponential~$T(X) =
  e^X$. The section axiom $\D  \jt =
  \id_{\ratvir}$ follows from Lemma~\mref{lem:allint}.
 Further, by Lemma~\mref{lem:eqid}, we only need to show that
\begin{equation*}\mlabel{eq:hom}
 \E(\Phi\Psi)  = \E(\Phi) \E(\Psi) \quad \text{ for all }\, \Phi, \Psi\in \ratvir.
\end{equation*}
Note from ~(\mref{eq:idt}) that
\begin{equation*}
\E(\Phi) = \Phi   - \jt \Phi' = \Phi   - \sum_{n\geq 1} (-1)^{n-1}T_n \Phi^{(n)} = \Phi   + \sum_{n\geq 1} (-1)^{n}T_n \Phi^{(n)},
\end{equation*}
which immediately yields~\meqref{eq:eval}. Hence
{\small
\begin{align*}
  \E(\Phi\Psi) =&~\sum_{n\geq 0} (-1)^{n}T_n (\Phi\Psi)^{(n)} = \sum_{n\geq 0} (-1)^{n}T_n \sum_{p+q=n\atop p, q\geq 0} {n\choose p} \, \Phi^{(p)} \Psi^{(q)} \\
=&~\sum_{n\geq 0} (-1)^{n} \sum_{p+q=n\atop p, q\geq 0} {p+q \choose p} T_n\Phi^{(p)} \Psi^{(q)}
\overset{\meqref{eq:binom}}= \sum_{n\geq 0} (-1)^{n} \sum_{p+q=n\atop p, q\geq 0}T_{p}T_{q}\Phi^{(p)} \Psi^{(q)}\\
=&~ \sum_{p,q\geq 0}  (-1)^{p+q} T_{p}T_{q}\Phi^{(p)} \Psi^{(q)}
= \bigg(\sum_{p\geq 0}(-1)^{p} T_{p} \Phi^{(p)} \bigg) \bigg( \sum_{q\geq 0}  (-1)^{q}T_{q}\Psi^{(q)} \bigg) = \E(\Phi) \E(\Psi).
\end{align*}
}
For the converse, assume now~$(\ratvir, \D,
\jt)$ is an integro-differential ring. Then its evaluation must be multiplicative, so for any $p,
q\geq 0$ the expression
{\small
\begin{eqnarray*}
\E(X^{p}X^{q})&=&\E(X^{p+q}) = \sum_{k\geq 0} (-1)^{k}T_k (X^{p+q})^{(k)}
= \sum_{0\leq k < p+q} (-1)^{k}T_k (X^{p+q})^{(k)} +(-1)^{p+q}(p+q)! \, T_{p+q}\\
&=&\sum_{0\leq k < p+q} (-1)^{k}T_k \sum_{m+n=k\atop m,n\geq0}{m+n\choose
    m}(X^{p})^{(m)}(X^{q})^{(n)} +(-1)^{p+q} (p+q)! \, T_{p+q}\\
&=&\sum_{0\leq m \leq p, 0\leq n\leq q\atop m+n < p+q} {m+n\choose m} \, (-1)^{m+n} \, T_{m+n} \, (X^{p})^{(m)}(X^{q})^{(n)} +(-1)^{p+q} (p+q)! \, T_{p+q}
\end{eqnarray*}
}
must coincide with
\begin{eqnarray*}
  \E(X^{p})\E(X^{q}) &=& \sum_{0\leq m \leq p\atop 0\leq n \leq q}
                         (-1)^{m+n} \, T_mT_n \, (X^{p})^{(m)} \, (X^{q})^{(n)}\\
                     &=& \sum_{0\leq m \leq p, 0\leq n\leq q\atop m+n < p+q}
                         (-1)^{m+n} \, T_mT_n \, (X^{p})^{(m)} \, (X^{q})^{(n)}+(-1)^{p+q} p! q! \, T_{p}T_{q},
\end{eqnarray*}
and this yields---by the uniqueness of molecular decomposition---the ``leading-term'' equality
$$T_{p}T_{q} =T_{p+q} \frac{(p+q)!}{p! q!} ={p + q \choose p} \, T_{p+q},$$
which shows that the differential tower~$T$ is an analytic exponential and thus $T(X) = e^X$.
\end{proof}

Recall that $(\QQ[[x]], \frac{d}{dx}, \cancum)$ is an integro-differential ring, where
$\frac{d}{dx}$ and $\cancum$ is the usual derivation and Rota-Baxter operator,
respectively~\cite{GRR}. These are the operations on formal power series that correspond to the
analogous species operations, under the map sending each species to its generating series.

\begin{prop}\mlabel{prop:homo}
  The map
  $$(\ratvir, \D, \comcum) \rightarrow \Big(\QQ[[x]], \frac{d}{dx}, \cancum\Big), \quad \Phi\mapsto
  \Phi(x)$$ is a homomorphism of integro-differential rings.
\end{prop}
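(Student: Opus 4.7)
The plan is to verify separately that the map $\Phi \mapsto \Phi(x)$ preserves the three pieces of structure: the ring operations, the derivation, and the integral. The ring-homomorphism and derivation-compatibility statements are essentially collected in Example~\ref{ex:gs}: the formulas $(F+G)(x) = F(x)+G(x)$, $(FG)(x) = F(x)G(x)$, $X(x)=x$ (hence $1 \mapsto 1$), and $F'(x) = \tfrac{d}{dx} F(x)$ are already stated there for species. Since the definition extends by $\ZZ$-linearity (and then by localization at $\ZZ \setminus\{0\}$) to $\ratvir$, these properties persist verbatim on virtual species.

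The content of the proposition is therefore the compatibility with the integral operator, namely
\[
(\comcum \Phi)(x) = \cancum \Phi(x) \qquad \text{for every } \Phi \in \ratvir.
\]
To establish this, I would apply Proposition~\ref{prop:gsj} with $T = e^X$, so that $T_i = X^i/i!$ and hence $T_i(x) = x^i/i!$. This yields the explicit formula
\[
(\comcum \Phi)(x) \;=\; \sum_{i\geq 1}(-1)^{i-1}\,\frac{x^i}{i!}\,\Phi(x)^{(i-1)},
\]
which is a well-defined element of $\QQ[[x]]$ since the $i$-th summand has order at least $x^i$, so the sum converges in the $x$-adic topology.

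The remaining step is to identify the right-hand side with $\cancum \Phi(x) = \int_0^x \Phi(t)\,dt$. The quickest way is to note that $\cancum$ is characterized among $\QQ$-linear endomorphisms of $\QQ[[x]]$ as the unique antiderivative vanishing at $x = 0$. Set $f(x) := \Phi(x)$ and $G(x) := \sum_{i \geq 1} (-1)^{i-1}\,\tfrac{x^i}{i!}\,f^{(i-1)}(x)$. Then $G(0) = 0$ is immediate, and a direct termwise differentiation combined with an index shift $j = i-1$ gives
\[
G'(x) = \sum_{i\geq 1}(-1)^{i-1}\Big(\tfrac{x^{i-1}}{(i-1)!} f^{(i-1)}(x) + \tfrac{x^i}{i!} f^{(i)}(x)\Big) = f(x),
\]
since the telescoping between the two series leaves only the $i=1$ contribution from the first sum. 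Therefore $G = \cancum f$, completing the argument.

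No step presents a serious obstacle: the main work is the telescoping identity in the last paragraph, which is a short manipulation. Alternatively, one could cite Lemma~\ref{lem:eqid} and check that $\Phi \mapsto \Phi(x)$ intertwines the respective evaluations $\E$ and $f \mapsto f(0)$; in view of~\eqref{eq:eval}, this is equivalent to the same Taylor-expansion identity, so the two routes are essentially the same.
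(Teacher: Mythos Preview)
Your argument is correct. Both your proof and the paper's proof begin identically: they invoke Example~\ref{ex:gs} for the ring and derivation compatibility, and then apply Proposition~\ref{prop:gsj} with $T=e^X$ to obtain the series $G(x)=\sum_{i\ge 1}(-1)^{i-1}\tfrac{x^i}{i!}\,\Phi(x)^{(i-1)}$. The divergence is only in how this series is identified with $\cancum\Phi(x)$. The paper substitutes the Taylor expansion $\Phi^{(i)}(x)=\sum_{j\ge i}\tfrac{x^{j-i}}{(j-i)!}\Phi^{(j)}(0)$, interchanges the double sum, and then evaluates the inner sum via the binomial identity $\sum_{i=0}^{j}\binom{j+1}{i+1}(-1)^i=1$ to recover the Taylor series of $\cancum\Phi(x)$ directly. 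You instead characterize $\cancum$ as the unique antiderivative vanishing at $0$ and check $G(0)=0$, $G'=\Phi(x)$ by a one-line telescope. Your route is shorter and avoids the binomial computation; the paper's route is more explicit at the level of coefficients. Note also that the second clause of Proposition~\ref{prop:gsj} already asserts $\tfrac{d}{dx}(\jt\Phi)(x)=\Phi(x)$, so you could shorten further by citing that instead of redoing the telescope.
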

\begin{proof}
  We know already from Example~\mref{ex:gs} that the map is a differential ring homomorphism, so it
  remains to prove that it also preserves the Rota-Baxter structure. We have
  \begin{eqnarray*}
    (\comcum \Phi)(x)
    &\overset{\meqref{eq:gsj}}=& \sum_{i\geq 1} (-1)^{i-1} \big(e^X\big)_i(x) \, \Phi^{(i-1)}(x)
    = \sum_{i\geq 0} (-1)^i \frac{x^{i+1}}{(i+1)!} \, \Phi^{(i)}(x),\\
    &=& \sum_{i \ge 0} (-1)^i \frac{x^{i+1}}{(i+1)!} \sum_{j \ge i} \frac{x^{j-i}}{(j-i)!} \, \Phi^{(j)}(0),
  \end{eqnarray*}
  using in the last step the expansion
  $\Phi^{(i)}(x) = \sum_{j \ge i} \tfrac{x^{j-i}}{i! (j-i)!} \, \Phi^{(j)}(0)$ obtained from
  differentiating the Taylor development
  $\Phi(x) = \sum_{j \ge 0} \tfrac{x^j}{j!} \, \Phi^{(j)}(0)$. Interchanging the summations yields
  \begin{eqnarray*}
    (\comcum \Phi)(x)
    &=& \sum_{j \ge 0} \frac{x^{j+1}}{(j+1)!} \, \Phi^{(j)}(0) \sum_{i=0}^j (-1)^i \frac{(j+1)!}{(i+1)!
        \, (j-i)!} = \sum_{j \ge 0} \frac{x^{j+1}}{(j+1)!} \, \Phi^{(j)}(0) = \cancum \Phi(x),
  \end{eqnarray*}
  where the inner sum in the last but one step is computed by the binomial evaluation
  $$\sum_{i=0}^j \binom{j+1}{i+1} \, (-1)^i = 1 - \sum_{i=0}^{j+1} \binom{j+1}{i} \, (-1)^i =
  1 - (1-1)^{j+1} = 1,$$
  while the last step is plain Taylor development.
\end{proof}

\begin{coro} \mlabel{coro:RBV} The pair $(\ratvir,\D, \expcum)$ is a differential Rota-Baxter ring.
\end{coro}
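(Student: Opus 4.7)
The corollary is an immediate consequence of the two results already established. The plan is simply to chain them together.

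First I would invoke Theorem \ref{thm:intd} with the differential tower $T = e^X$, the canonical analytic exponential. Since $\jt = \expcum$ in this case (which is exactly the notation set up by the macro definition $\expcum = \cum_{\aexp}$ with $\aexp = e^X$), the theorem directly yields that $(\ratvir, \D, \expcum)$ is an integro-differential ring of weight zero.

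Next I would apply Lemma \ref{lem:intrb}, which states that every integro-differential ring of weight $\lambda$ is automatically a differential Rota-Baxter ring of the same weight. Applying this to the triple $(\ratvir, \D, \expcum)$ (with $\lambda = 0$) gives the desired conclusion.

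There is no real obstacle here: once Theorem \ref{thm:intd} is in place, the corollary is a one-line deduction, serving only to highlight the weaker but historically important Rota-Baxter structure that comes along for free. The only thing to check is the notational identification $\jt|_{T = e^X} = \expcum$, which is a definitional matter.
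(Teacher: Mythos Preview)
Your proposal is correct and matches the paper's own proof exactly: the paper simply states that the result follows from Lemma~\ref{lem:intrb} and Theorem~\ref{thm:intd}. The notational identification $\expcum = \jt|_{T=e^X}$ is indeed just a matter of unwinding definitions, so there is nothing further to add.
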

\begin{proof}
  This follows from Lemma~\mref{lem:intrb} and Theorem~\mref{thm:intd}.
\end{proof}

Before concluding this subsection by equipping species with a matching Rota-Baxter structure, let us
briefly investigate the analytic integral operator~$\cancum$ and compare it with its combinatorial
counterpart~$\comcum$. From a combinatorial point of view, the latter is clearly advantageous since
it remains within the confines of (virtual) species, avoiding any extraneous rational
coefficients. But it would yield some unnatural integrals, at least when judged from the
perspective of calculus.

\begin{exam}\quad
  \begin{enumerate}
  \item\label{it:powint} For the \name{powers} one
    has~$\expcum \tfrac{X^k}{k!} = \tfrac{X^{k+1}}{(k+1)!}$ for all~$k \ge 0$. This follows
    from~$\expcum X^k = \tfrac{X^{k+1}}{k+1}$, which one shows by computing
    \begin{equation*}
      \jt\, X^k = \sum_{n \ge 0} (-1)^n \frac{X^{n+1}}{(n+1)!} \, k^{\underline{n}} \, X^{k-n}
      = \left( \sum_{n=0}^k \binom{k}{n} \, \frac{(-1)^n}{n+1} \right) \, X^{k+1},
    \end{equation*}
    where the parenthesized coefficient comes out as $\tfrac{1}{k+1}$ as follows: Integrating the
    binomial formula $$(x+1)^k = \sum_n \binom{k}{n} \, x^n$$ with $\cancum$ yields
    $$\tfrac{(x+1)^{k+1}}{k+1} - \tfrac{1}{k+1} = \sum_n \binom{k}{n} \, \tfrac{x^{n+1}}{n+1},$$ and
    evaluating this at $x=-1$ leads to $$-\tfrac{1}{k+1} = \sum_n \binom{k}{n} \,
    \tfrac{(-1)^{n+1}}{n+1},$$
    as required. In contrast, the combinatorial integral~$\comcum X^k$
    yields a different result (which seems hard to interpret).
Note that this implies $$\expcum L = \sum_{k \ge 1} \tfrac{X^k}{k} \neq \sum_{k \ge 1}
    \tfrac{X^k}{C_k} = \mathcal{C}$$
    even though it is known~\cite[(1.4.28)]{BLL} that the cycle species~$\mathcal{C}$ has derivative
    $\mathcal{C}' = L$. Obviously, the integration constant does not match up.

\item For the \name{analytic exponentials}, this
    implies~$$\expcum e^{\lambda X} = \lambda^{-1} \, (e^{\lambda X} - 1) = \tfrac{e^{\lambda
        X}}{\lambda} \, \big|_0^X,$$ as to be expected from the corresponding relation in
    calculus. In contrast, the \name{combinatorial exponential} yields $\expcum E = (e^{-X} - 1) E$
    as one checks immediately. On the other hand, the combinatorial integral operator gives
    $\comcum E = E_{\text{alt}} E$ and~$\comcum e^X = E_{\text{alt}} \, e^X$, where we use the
    alternating series
    $$E_{\text{alt}} := E_1 - E_2 + E_3 - E_4 +- \cdots = E_{\text{odd}} - E_{\text{even}} + 1.$$

  \item\label{it:nozeroin} The computation in Item~\ref{it:powint} instills some hope that the
    evaluation~\meqref{eq:eval} associated to the \name{analytic integral operator}~$\expcum$ might
    just be \name{initialization at the zero species?} In other words, we would have the
    evaluation~$\E_0(\Phi) := \Phi(0)$, tantamount to restricting a species to the empty label
    set. However, this is not the case as one may see by choosing a simple example such as the
    $3$-cycle species~$\Phi = C_3$ from Table~3 of \S2.6 in~\cite{BLL}. Since we have
    $\D(C_3) = X^2$ by Table 5 in Appendix 2 of~\cite{BLL}, we obtain $\E(\Phi) = C_3 - X^3/3$, and
    this is clearly nonzero because $\vir$ is a polynomial ring in the indeterminates
    $X, E_2, E_3, C_3, \dots$ according~\cite[Proposition~2.6.15]{BLL}.
Of course, we are ``not far away'' since the difference
    $$\E(\Phi) - \E_0(\Phi) = \E(\Phi) = C_3 - X^3/3$$ is clearly a differential constant (all
    evaluations map to the ring of differential constants).
  \item Perhaps the \name{combinatorial integral operator}~$\comcum$ has for its evaluation the
    \name{initialization at the zero species?} Again, the answer is no: Taking for
    example~$\Phi = X^2$, we obtain the ``evaluation''~$\Phi - \comcum \, \D \Phi = 2 E_2 - X^2$,
    which is of course different from~$\E_0(\Phi) = 0$. Note that we have used scare quotes since we
    know from the proof of Theorem~\mref{thm:intd} that such an ``evaluation'' would fail to be
    multiplicative and thus hardly deserves that name. (Incidentally, this is another proof that the
    ``evaluation'' cannot be initialization at the zero species, which is of course multiplicative.)
  \end{enumerate}
  \mlabel{ex:expcum}
\end{exam}

While this clarifies some important issues, some crucial questions do remain open at this
point. Among these, let us just mention the following two.

\begin{questions}\quad
  \begin{enumerate}
  \item We know already that any Joyal integral operator different from~$\expcum$ fails to yield an
    integro-differential structure on~$\vir$. But do we at least get a differential Rota-Baxter
    structure for (some) other differential towers?
  \item Can we characterize the evaluation~\meqref{eq:eval} associated to~$\expcum$ in some more
    natural way, like initialization at some quasi-zero species?
  \end{enumerate}
\end{questions}

We will now see how the analytic integral operator can be modified into a whole family of
Rota-Baxter operators that are matched up amongst themselves in the following precise
way~\cite{ZGG}. This result will shed some light on the unit-modified differential Reynolds ring
structure to be established in Section~\ref{ss:locsp}.

\begin{defn}
  A \name{matching Rota-Baxter ring} $\big(A, (P_\omega)_{\omega\in\Omega}\big)$ of weight zero is a
  ring $A$ equipped with a family of additive operators $ (P_\omega)_{\omega\in\Omega}$ on $A$
  satisfying
  $$P_\alpha(a) \, P_\beta(b)=P_\alpha(aP_\beta(b))+P_\beta(P_\alpha(a)b)$$
  for all~$\alpha, \beta \in \Omega$.
\end{defn}

\begin{theorem}
Let $\Omega$ be a nonempty  set of differential constants.
For any $\omega\in\Omega$, define additive operators
\begin{equation}
\begin{split}
\derk_{\omega}\colon \ratvir \to& \ratvir,\quad  \Phi \mapsto ({\omega}\Phi)'={\omega}\Phi',\\
P_{\omega}\colon\ratvir \to& \ratvir,\quad  \Phi \mapsto \expcum {\omega}\Phi ={\omega}\expcum \Phi.
\end{split}
\mlabel{eq:sder}
\end{equation}
Then
\begin{enumerate}
\item For any $\omega\in\Omega$, the operator $\derk_{\omega}$ is a differential operator on $\ratvir$$:$
$$\derk_{\omega}(\Phi \Psi)=\derk_{\omega}(\Phi)\Psi+ \Phi \derk_{\omega}(\Psi)\,\text{ for all }\, \Phi, \Psi\in\ratvir,$$
and $\derk_{\omega} \expcum={\omega} \, \id_{\ratvir}$.
\mlabel{it:dpk1}

\item The pair $\big(\ratvir, (P_{\omega})_{\omega\in\Omega}\big)$ is a matching Rota-Baxter ring.
\mlabel{it:dpk2}
\end{enumerate}
\mlabel{thm:dir0}
\end{theorem}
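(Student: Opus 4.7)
My plan is to reduce both parts of the theorem to three facts that are already available: the Leibniz rule and the section axiom $\D \expcum = \id_{\ratvir}$ from Theorem~\mref{thm:intd}, the Rota-Baxter axiom for $\expcum$ from Corollary~\mref{coro:RBV}, and one auxiliary observation---that every differential constant commutes with $\expcum$. The last one is the only step that genuinely uses the integro-differential (and not merely differential Rota-Baxter) structure, and I would establish it first by specializing the hybrid Rota-Baxter axiom~\meqref{eq:ibpl} with weight $\lambda=0$: taking $x=\omega$ with $\omega'=0$ gives $0 = \omega \expcum \Psi - \expcum(\omega \Psi)$, so $\expcum(\omega \Psi) = \omega \expcum \Psi$ for all $\omega\in\ker\D$ and $\Psi\in\ratvir$. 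This simultaneously justifies the second equality in the definition of $P_\omega$ in~\meqref{eq:sder}; the first equality in the definition of $\partial_\omega$ is immediate from $\omega'=0$ and the ordinary Leibniz rule on $(\ratvir,\D)$.

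For part~\meqref{it:dpk1}, I would then verify the Leibniz rule for $\partial_\omega$ by the one-line computation
\[
\partial_\omega(\Phi\Psi) = \omega(\Phi\Psi)' = \omega\Phi'\Psi + \omega\Phi\Psi' = (\omega\Phi')\Psi + \Phi(\omega\Psi') = \partial_\omega(\Phi)\,\Psi + \Phi\,\partial_\omega(\Psi),
\]
and the identity $\partial_\omega \expcum = \omega\,\id_{\ratvir}$ from
$
\partial_\omega(\expcum \Phi) = \omega(\expcum\Phi)' = \omega \Phi,
$
invoking the section axiom in the last step.

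For part~\meqref{it:dpk2}, I would apply the Rota-Baxter axiom for $\expcum$ (from Corollary~\mref{coro:RBV}) in the form $(\expcum \Phi)(\expcum\Psi) = \expcum(\Phi\expcum\Psi) + \expcum(\Psi\expcum\Phi)$. Using commutativity of $\ratvir$ to pull the differential constants to the front, together with the commutation $\expcum(\omega \,\cdot\,) = \omega\expcum(\,\cdot\,)$ established above, both sides of the matching Rota-Baxter relation expand to the common expression $\alpha\beta\,\expcum(\Phi\expcum\Psi) + \alpha\beta\,\expcum(\expcum(\Phi)\Psi)$, proving the identity.

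The only conceptually nontrivial step is the commutation of differential constants with $\expcum$; once this is in hand, the rest is bookkeeping that follows the same pattern as the classical scalar-rescaling of an integro-differential ring, and no new input from the combinatorics of species is required.
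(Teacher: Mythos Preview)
Your proof is correct and follows essentially the same route as the paper's: both parts reduce to the ordinary Leibniz rule, the section axiom, and the Rota-Baxter identity for $\expcum$ from Corollary~\mref{coro:RBV}. You are in fact slightly more careful than the paper, which silently uses the commutation $\expcum(\omega\,\cdot\,) = \omega\,\expcum(\,\cdot\,)$ for differential constants in its proof of~\mref{it:dpk2} (when pulling $\alpha,\beta$ inside and outside the integral) without isolating it; your derivation of this commutation from the hybrid Rota-Baxter axiom~\meqref{eq:ibpl} with $x=\omega$ makes that step explicit.
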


\begin{proof}
Let $\Phi, \Psi\in\ratvir$.

\mref{it:dpk1}
By ~\eqref{eq:sder}, for any $\omega\in\Omega$, we have
\begin{eqnarray*}
\derk_{\omega}(\Phi \Psi)={\omega} \Phi'\Psi+{\omega}\Phi\Psi'
=\derk_{\omega}(\Phi)\Psi+\Phi \derk_{\omega}(\Psi).
\end{eqnarray*}
By Lemma~\mref{lem:allint}, we have
\begin{eqnarray*}
\derk_{\omega} (\expcum \Phi) ={\omega} (\expcum \Phi)'
={\omega} \Phi,
\end{eqnarray*}
and so $\derk_{\omega}  \jint={\omega} \, \id_{\ratvir}$.

\mref{it:dpk2}
By Corollary~\mref{coro:RBV},
\begin{equation*}\mlabel{eq:rbw0}
(\expcum \Phi)(\expcum \Psi)=\expcum\Big(\Phi \expcum\Psi+ (\expcum \Phi)\Psi\Big).
\end{equation*}
Thus we have
\begin{eqnarray*}
P_{\alpha}(\Phi)P_{\beta}(\Psi)&=&\alpha \beta(\expcum \Phi)(\expcum \Psi)\\
&=&\alpha \expcum\Big(\Phi \beta\expcum\Psi\Big)+ \beta\Big((\alpha\expcum \Phi)\Psi\Big)\\
&=& P_{\alpha}(\Phi P_{\beta}(\Psi))+ P_{\beta}(P_{\alpha}(\Phi) \Psi),
\end{eqnarray*}
which completes the proof.
\end{proof}

\begin{remark}
In the setting of Theorem~\mref{thm:dir0}, the matching Rota-Baxter operator $P_{\omega}$ is not a
right inverse of the differential operator $\derk_{\omega}$ since we have
 $$\derk_{\omega}(P_{\omega}(\Phi)) = \derk_{\omega}({\omega}) \expcum \Phi +
 \omega \, \derk_{\omega}(\expcum \Phi) = {\omega}^2 \, \Phi\neq \Phi.$$
 Hence, for any $\omega\in\Omega$, the triple $(\ratvir, \derk_{\omega}, P_{\omega})$ is neither a
 differential Rota-Baxter ring nor an integro-differential ring for $K_\omega\neq\pm1$. On the other
 hand, $(\ratvir, \derk_{\pm1}, P_{\pm1})$ is indeed an integro-differential ring.
\end{remark}

\subsection{The integro-differential ring on linear species}
\mlabel{ss:linsp}

In this subsection, we will equip virtual linear species with an integro-differential ring
structure. Let us first recall some basic concepts of linear species~\mcite{BLL,J0}.

A \name{totally ordered set} is a pair $\ell = (U,\leq)$, where $U$ is a finite set and $\leq$ is a total order relation on $U$. Define $|\ell|:= |U|$ to be the cardinality of $\ell$.
For $u,v\in  \ell$, we write $u\leq v$ or $u\leq_\ell v$ when $u$ is less than or equal to $v$ with respect to
the order of $\ell$ and $u<v$ when $u\leq v$ and $u\neq v$. The smallest element of a nonempty totally ordered
set $ \ell$ is denoted by $m_\ell$, that is,
$$m_\ell:= \min\{x \mid x\in \ell\}.$$
Let $\ell =(U, \leq)$ be a totally ordered set. For subsets $U_1, \dots, U_k$ of $U$ such that $U_1+\cdots+U_k = U$, we write
$$ \ell_1+\cdots +\ell_k = \ell$$
if $\ell_i$ is the restriction $\ell|_{U_i}$ of $\ell$ to $U_i$, for $i=1, \ldots, k$.

An increasing function $f: \ell_1\to \ell_2$ between ordered sets $\ell_1$ and $\ell_2$ is a function such that, for $u$ and $v$ in $\ell_1$,
$$u<_{\ell_1} v \Rightarrow f(u) <_{\ell_2} f(v).$$
For $n \geq 1$, we denote by $[n]$, the ordered set $\{1,2, \ldots, n\}$ with the usual order, with the convention that $[0] = \emptyset$.  Note that for any ordered set $\ell$ of cardinality $n$, there exists a unique
increasing bijection $f: [n] \to \ell$.

The \name{ordinal sum} of two ordered sets $\ell_1 = (U_1, \leq_1)$ and $\ell_2 = (U_2, \leq_2)$, denoted by $\ell = \ell_1\osum\ell_2$, is the ordered set $\ell=(U, \leq)$,  where $U = U_1+U_2$, and, for $u,v\in U$, we have that
\[
u\leq_\ell v \Leftrightarrow
\left\{ \begin{array}{ll} u\leq_{\ell_1} v, & \text{ when } u,v\in U_1 \\u\in U_1 \text{ and } v\in U_2, \\ u\leq_{\ell_2} v, & \text{ when } u,v\in U_2.
\end{array} \right.
\]
In other words, $\ell_1 = \ell|_{U_1}$, $\ell_2 = \ell|_{U_2}$ and all the elements of $\ell_1$ are smaller than those of $\ell_2$. If $\ell_1$ and $\ell_2$ are totally ordered sets, then $\ell_1\osum\ell_2$ also is. In particular, denote by $1\osum \ell$, the totally ordered set obtained by adding a new minimum element to $\ell$.

\begin{defn}
  A \name{linear species} is a rule $F$ which
  \begin{enumerate}
  \item to each totally ordered set $\ell$, associates a finite set $F[\ell]$,

  \item to each increasing bijection $\sigma:\ell_1 \to \ell_2$, associates a function
    $ F[\sigma]: F[\ell_1] \to F[\ell_2]$.
  \end{enumerate}
  These functions $F[\sigma]$ must satisfy the following functorial properties:
  \begin{equation}
    F[\id_\ell] = \id_{F[\ell]}, \quad F[\sigma\circ \tau] = F[\sigma] \circ F[\tau].
    \mlabel{eq:spefu}
  \end{equation}
  In other words, an $\LL$-species is a functor $\LL \to \EE$, as noted in the Introduction.
\end{defn}

An element $s \in F[\ell]$ is called an \name{$F$-structure} on $\ell$, and the function $F[\sigma]$
is the \name{transport of $F$-structures} along $\sigma$. Note that ~(\mref{eq:spefu}) implies
that every transport function $F[\sigma]$ is a bijection.

\begin{defn}
  An isomorphism of $\LL$-species $\varphi: F\to G$ is a family of bijection $$\varphi_\ell: F[\ell] \to G[\ell]$$
  for each totally ordered set $\ell$, such that for any increasing bijection $\sigma:\ell_1\to \ell_2$ between totally ordered sets $\ell_1$ and $\ell_2$, one has
  $G[\sigma]\circ \varphi_{\ell_1} = \varphi_{\ell_2} \circ F[\sigma]$:
  \[
    \xymatrix{
      F[\ell_1]\ar^{\varphi_{\ell_1}}[r] \ar_{F[\sigma]}[d]&G[\ell_1]\ar_{G[\sigma]}[d]\\
      F[\ell_2]\ar^{\varphi_{\ell_2}}[r] &G[\ell_2]
    }
  \]
\end{defn}

As in the set species case, this means we have a natural isomorphism, and we write this again as
$F = G$. A number of operations on linear species can be defined.

\begin{defn}
  Let $F, G$ be linear species and $\ell$ a totally ordered set. Then we define
  \begin{enumerate}
  \item the sum, $F+G$:
    $$(F+G)[\ell]:= F[\ell] + G[\ell] \quad \text{as disjoint union};$$
  \item the product, $FG$:
    $$(FG)[\ell]:= \sum_{\ell_1+\ell_2 =\ell} F[\ell_1] \times G[\ell_2];$$
  \item the derivative, $\D F = \der{F}$:
    $$\der{F}[\ell]:= F[1\osum \ell];$$

  \item the integral, $\lint F$:
    \[(\lint F)[\ell] := \left\{\begin{array}{ll}
                                  \emptyset, & \text{if } \ell= \emptyset,\\
                                  F[\ell\setminus m_\ell],& \text{otherwise;}
                                \end{array}\right. \mlabel{d}  \]
                          \item the substitution or (partitional) composition, $F \circ G = F(G)$ (when $G(0)=0$)~\cite[(5.1.20)]{BLL}:
 $$(F\circ G)[\ell]:=\sum_{\pi\in \mathrm{Par}(\ell)} F[\ell_\pi] \times \prod_{p\in \pi} G[\ell|_p].$$
Here $\ell_\pi$ denotes the totally order set on the partition $\pi$ of $\ell$ with order induced from $\ell$, and $\ell_p$ denotes the totally ordered set on the subset $p$ of $\ell$ with order restricted from $\ell$.  
\end{enumerate}
  In each case, the transport of structures is defined in the obvious way.
  \mlabel{ldef}
\end{defn}

Note that---unlike for set species---we have a canonical choice of integral operator for linear
species. Figure~\mref{fig5} shows a graphical illustration of an $\lint F$-structure with
${\rm min} = \min \ell$.

\begin{figure}[h]
  \begin{center}
    \includegraphics[scale=0.6]{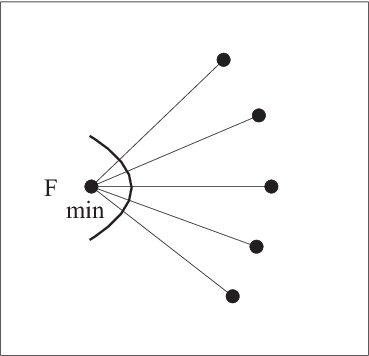}\caption{An $\cum F$-structure}\mlabel{fig5}
  \end{center}
\end{figure}

Similar to the set species in Section~\mref{sec:idrsp}, the linear species (resp.\@ virtual linear
species) also constitute a semiring (resp.\@ ring) with respect to the sum and product operations.
According to~\cite[Exercise~5.1.3(a)]{BLL}, two (virtual) linear species are isomorphic if and only if
their generating series are equal. We are thus free to denote the semiring of linear species
by~$\lspe$ and the ring of virtual linear species by~$\lvir$. This is further justified since all
the species operations of Definition~\mref{ldef} correspond exactly to the analogous series operations
by~\cite[Proposition~5.1.12]{BLL}.

The injection $\lspe \hookrightarrow \lvir, \ F\mapsto F-0$ is a semiring homomorphism. As in the
set species case, every virtual linear species $\Phi$ can be written uniquely as the reduced form
$\Phi = \Phi^+ - \Phi^-$, and the operations $\D$ and $\lint$, as well as the projections $\E$ and
$\J$ defined as in~\meqref{eq:jd}, can be extended to $\lvir$ by additivity.

Since linear species---unlike their set counterpart---enjoy a canonical integral operator, it is
hardly suprising that its associated evaluation is indeed~\cite[(5.1.35)]{BLL} evaluation at the
zero species---unlike for set species, where we have seen in Example~\mref{ex:expcum}\mref{it:nozeroin}
that this is not the case there.

\begin{lemma}
  For any linear species $F$, we have $\big(\lint F\big)' = F$ and $\E(F) = F(0)$.
  \mlabel{lem:jaction}
\end{lemma}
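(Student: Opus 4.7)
The plan is to unfold the definitions in Definition~\mref{ldef} and check both identities pointwise on totally ordered sets $\ell$, then extend to $\lvir$ by additivity of the operators. Throughout, I will write~$F_0$ for the subspecies of~$F$ restricted to the empty ordered set; note that~$F(0) = F_0$ under the substitution convention, since substituting the empty species kills all positive-cardinality structures.

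First I would establish the section axiom $(\lint F)' = F$ by direct computation. By definition,
\[
(\lint F)'[\ell] = (\lint F)[1 \osum \ell].
\]
The ordered set $1 \osum \ell$ is nonempty (it contains the adjoined minimum), so the second branch in Definition~\mref{ldef}(d) applies and yields $F[(1 \osum \ell) \setminus m_{1\osum\ell}]$. But $m_{1\osum\ell}$ is precisely the adjoined minimum, so $(1 \osum \ell) \setminus m_{1\osum\ell} = \ell$ as ordered sets, giving $(\lint F)'[\ell] = F[\ell]$. Naturality in $\ell$ is automatic: any increasing bijection $\sigma\colon \ell_1 \to \ell_2$ induces $\id \osum \sigma\colon 1 \osum \ell_1 \to 1 \osum \ell_2$ whose restriction away from the adjoined minimum is just~$\sigma$, so the diagrams commute.

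For the evaluation identity, recall from~\meqref{eq:jd} that $\E = \id - \lint \D$, so it suffices to show $\lint F' = F - F_0 =: F_{\geq 1}$. On the empty ordered set, $(\lint F')[\emptyset] = \emptyset = F_{\geq 1}[\emptyset]$ by the first branch of Definition~\mref{ldef}(d). For a nonempty~$\ell$,
\[
(\lint F')[\ell] = F'[\ell \setminus m_\ell] = F[1 \osum (\ell \setminus m_\ell)].
\]
Since~$\ell$ is nonempty with minimum~$m_\ell$, the map $\ell \to 1 \osum (\ell \setminus m_\ell)$ sending~$m_\ell$ to the adjoined minimum and fixing every other element is an increasing bijection; applying~$F$ to its inverse yields a bijection $(\lint F')[\ell] \to F[\ell] = F_{\geq 1}[\ell]$ natural in~$\ell$, as one checks by the same argument as before.

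The only subtle point, and the one I would verify carefully, is naturality of this canonical bijection under arbitrary increasing bijections of ordered sets: these necessarily send minimums to minimums, so the ``identify the old minimum with the newly adjoined minimum'' construction commutes with transport of structures. Once naturality is in hand, we obtain $\lint F' = F_{\geq 1}$ as linear species and hence $\E(F) = F - F_{\geq 1} = F_0 = F(0)$. Both identities then extend immediately to virtual linear species via the additive decomposition $\Phi = \Phi^+ - \Phi^-$, since~$\D$, $\lint$, and thus~$\E$ are additive.
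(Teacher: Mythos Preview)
Your proof is correct and complete: both identities follow by direct unfolding of Definition~\mref{ldef}, and you have handled the naturality verification that makes the isomorphism $\lint F' \cong F_{\ge 1}$ an identity of linear species rather than a mere cardinality-wise bijection.

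The paper itself does not prove this lemma; it simply records the statement with a citation to~\cite[(5.1.35)]{BLL}. So there is nothing to compare at the level of argument. Your write-up is exactly the kind of direct verification one would expect, and indeed the subsequent Lemma~\mref{lem:nder0} in the paper (which \emph{is} proved) relies on this result in precisely the way your argument supports. One very minor remark: the extension to virtual linear species in your last paragraph is not strictly part of this lemma (it is carried out separately in Lemma~\mref{lem:nder0}\mref{it:ejl}), so you could omit it here without loss.
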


This can now be transferred to the virtual linear species, where we also note a few simple
consequences.

\begin{lemma}
  Let $\Phi\in \lvir$ be a virtual linear species.
  \begin{enumerate}
  \item Then we have~$(\lint \Phi)' = \Phi$ as well as $\E(\Phi) = \Phi(0)$ and
    $\J(\Phi) =\Phi - \Phi(0)$.
    \mlabel{it:ejl}

  \item For each $n\geq 0$ and each totally ordered set $\ell$, $\nder{\Phi}{n}[\ell] = \Phi[[n]\osum \ell]$.
    \mlabel{it:nord0}

  \item For each $n\geq 0$,  $\E(\nder{\Phi}{n}) = 0$ if and only if $\Phi[n] = \emptyset$.
    \mlabel{it:eton}

  \item If $\E(\nder{\Phi}{n}) = 0$ for all $n\geq 0$, then $\Phi = 0$.
    \mlabel{it:metric}
  \end{enumerate}
  \mlabel{lem:nder0}
\end{lemma}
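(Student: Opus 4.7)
The plan is to extend Lemma~\mref{lem:jaction} additively to virtual linear species and then unpack each clause in turn. First I would write $\Phi = \Phi^+ - \Phi^-$ in reduced form and use that $\D$, $\lint$, $\E$, and $\J$ all act on $\lvir$ by additive extension. This immediately upgrades $(\lint \Phi)' = \Phi$ and $\E(\Phi) = \Phi(0)$ from Lemma~\mref{lem:jaction}; then $\J(\Phi) = \Phi - \E(\Phi) = \Phi - \Phi(0)$ follows from the definitional identity $\E = \id_{\lvir} - \J$ in~\meqref{eq:jd}, settling part~(a).

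For~(b), I would induct on $n$. The base case $n=0$ is immediate since $[0]\osum\ell = \emptyset\osum\ell = \ell$. For the inductive step, using $\Phi^{(n+1)} = (\Phi^{(n)})'$, Definition~\mref{ldef}(c), the inductive hypothesis, associativity of the ordinal sum, and the identification $[n]\osum 1 = [n+1]$ as totally ordered sets, one obtains $\Phi^{(n+1)}[\ell] = \Phi^{(n)}[1\osum\ell] = \Phi[[n]\osum(1\osum\ell)] = \Phi[[n+1]\osum\ell]$, as required.

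For~(c), combining (a) and (b) yields $\E(\Phi^{(n)}) = \Phi^{(n)}(0) = \Phi^{(n)}[\emptyset] = \Phi[[n]\osum\emptyset] = \Phi[[n]] = \Phi[n]$, so the equivalence $\E(\Phi^{(n)}) = 0$ if and only if $\Phi[n] = \emptyset$ is immediate. For~(d), (c) forces $\Phi[[n]] = \emptyset$ for every $n \ge 0$; since every totally ordered set $\ell$ admits a unique increasing bijection with $[|\ell|]$, functoriality of $\Phi$ turns $\Phi[\ell]$ into a bijective image of $\Phi[[|\ell|]]$, hence empty, so $\Phi = 0$. The main subtlety (rather than a genuine obstacle) lies in interpreting ``$=\emptyset$'' for a virtual species: I would handle this via the reduced-form representation together with the characterization cited in the excerpt from~\cite[Exercise~5.1.3(a)]{BLL} that a virtual linear species is determined by its generating series, so that vanishing on every $[n]$ indeed forces $\Phi = 0$.
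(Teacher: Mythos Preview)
Your proposal is correct and follows essentially the same route as the paper: additive extension of Lemma~\mref{lem:jaction} for~(a), the same induction on~$n$ for~(b), the same chain $\E(\Phi^{(n)}) = \Phi^{(n)}(0) = \Phi^{(n)}[\emptyset] = \Phi[[n]\osum\emptyset] = \Phi[n]$ for~(c), and~(d) as a consequence of~(c). Your treatment of~(d) is in fact more detailed than the paper's (which simply says ``This follows from Item~(c)''), and your remark on interpreting ``$=\emptyset$'' for a virtual species addresses a subtlety the paper leaves implicit.
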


\begin{proof}
  \mref{it:ejl} From Lemma~\mref{lem:jaction} we have that
  $$(\D \lint)(\Phi) = (\D \lint)(\Phi^+ - \Phi^-) = (\D \lint)(\Phi^+) - (\D \lint)(\Phi^-) = \Phi^+ - \Phi^- = \Phi$$
  and
  $$\E(\Phi) = \E(\Phi^+ - \Phi^-) = \E(\Phi^+) - \E(\Phi^-) = \Phi^+(0) - \Phi^-(0)= (\Phi^+ - \Phi^-)(0) = \Phi(0),$$
  and now the rest follows from $\J = \id_{\lvir}-\E$.

  \mref{it:nord0}
  We prove the result by induction on $n\geq 0$. If $n=0$, we have that
  $$\nder{\Phi}{0}[\ell] = \Phi[\ell]= \Phi[\emptyset \osum \ell] =  \Phi[[0] \osum \ell].$$
  Assume the result is true for $0\leq n\leq k$ and consider the case $n = k+1$, where we obtain
  $$ \nder{\Phi}{k+1}[\ell] = \der{(\nder{\Phi}{k})}[\ell] =\nder{\Phi}{k}[1\osum \ell]
  = \Phi[[k] \osum (1\osum \ell)] = \Phi[[k+1] \osum  \ell].$$

  \mref{it:eton} It follows from Items~\mref{it:ejl} and~\mref{it:nord0} that
  \begin{align*}
    \E(\nder{\Phi}{n})[\ell] =  \nder{\Phi}{n}(0)[\ell] = \nder{\Phi}{n} [\emptyset] = \Phi[[n] \osum \emptyset]
    = \Phi[n]
  \end{align*}
  for any totally ordered set $\ell$. Hence $\E(\nder{\Phi}{n}) = 0$ if and only if $\Phi[n] = \emptyset$.

  \mref{it:metric} This follows from Item~\mref{it:eton}.
\end{proof}

Just as the virtual set species (Theorem~\mref{thm:intd}), also linear species form an
integro-differential ring. But this crucial result is much easier to prove for the case of linear
species, due to their afore-mentioned proximity to the (integro-differential) ring of formal power
series. We present here a short proof via equivalent characterizations of integration by parts
(Lemma~\ref{lem:eqid}).

\begin{theorem}
	The triple $(\lvir, \partial, \lint)$ is an integro-differential ring.
	\mlabel{thm:idls}
\end{theorem}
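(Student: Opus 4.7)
The plan is to invoke Lemma~\mref{lem:eqid} with weight $\lambda = 0$, reducing the hybrid Rota-Baxter axiom to the multiplicativity of the evaluation $\E = \id_{\lvir} - \lint\D$. This sidesteps any direct computation with integration by parts and leaves three short ingredients to verify.

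First I would observe that $(\lvir,\D)$ is already a differential ring: the Leibniz rule holds on $\lspe$ by the definitions of derivative and product for linear species, exactly as in the set-species case recorded in \meqref{eq:spder}, and it extends to $\lvir$ by additivity of the reduced-form decomposition. Second, the section axiom $\D\lint = \id_{\lvir}$ is precisely the first clause of Lemma~\mref{lem:nder0}\mref{it:ejl}, which was already proved for virtual linear species. Third, that same clause supplies the explicit form $\E(\Phi) = \Phi(0)$, so multiplicativity of $\E$ reduces to showing that the assignment $\Phi \mapsto \Phi(0)$ is multiplicative. For linear species $F,G$ this is immediate from the product definition
\begin{equation*}
(FG)[\emptyset] = \sum_{\ell_1 + \ell_2 = \emptyset} F[\ell_1] \times G[\ell_2] = F[\emptyset] \times G[\emptyset],
\end{equation*}
since $\emptyset$ admits the unique decomposition $\emptyset + \emptyset$; passing to $\lvir$ via the additivity of the reduced-form construction then preserves $(\Phi\Psi)(0) = \Phi(0)\,\Psi(0)$.

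No serious obstacle is anticipated: for linear species the canonical integral $\lint$ is literally ``strip the minimum,'' so the associated evaluation collapses to restriction to the empty label set, and the multiplicativity of $\E$ becomes a one-line check. This is in sharp contrast with the set-species case (Theorem~\mref{thm:intd}), where one first had to single out the analytic exponential $e^X$ among all differential towers and then carry out a nontrivial binomial identity to establish the multiplicativity of the evaluation; here the canonical nature of $\lint$ renders that step automatic.
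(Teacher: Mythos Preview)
Your proposal is correct and follows essentially the same route as the paper's own proof: invoke Lemma~\mref{lem:eqid} to reduce to multiplicativity of~$\E$, pull the section axiom and the identification $\E(\Phi)=\Phi(0)$ from Lemma~\mref{lem:nder0}\mref{it:ejl}, and finish with $(\Phi\Psi)(0)=\Phi(0)\Psi(0)$. Your only addition is spelling out why substitution at~$0$ is multiplicative via the product formula on~$\emptyset$, which the paper takes for granted.
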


\begin{proof}
	The section axiom $\partial  \lint = \id_{\lvir}$ follows from Lemma~\mref{lem:nder0}~\mref{it:ejl}.
	By Lemma~\mref{lem:eqid}, it suffices to show
	$$\E(\Phi \Psi)  =  \E(\Phi)\E(\Psi) \quad \text{ for all}\, \Phi, \Psi \in \lvir.$$
	Further by Lemma~\mref{lem:nder0}~\mref{it:ejl}, we have
	\begin{align*}
		\E(\Phi \Psi) = (\Phi \Psi)(0) =\Phi(0)\Psi(0)=\E(\Phi)\E(\Psi).
	\end{align*}
	This completes the proof.
\end{proof}

\subsection{The modified integro-differential ring on localized set species}
\mlabel{ss:locsp}

This subsection is devoted to the establishment of a unit-modified differential Reynolds ring
structure on localizations of virtual species. Let us begin with some notions and a lemma from~\mcite{GGL2}.

\begin{defn}
\begin{enumerate}
\item
A {\bf modified differential ring}\footnote{When $\lambda$ is a scalar, the operator is one of the differential type operators in~\mcite{GSZ} and the term modified differential operator is used in~\mcite{PZGL}. } $(R,D,\lambda)$ of weight $\lambda\in R$ is a pointed ring $(R,\lambda)$ with an additive operator $D:R\to R$, called a {\bf modified differential operator}, satisfying
		\begin{equation*}\mlabel{wda}
			D(xy)=D(x)y+xD(y)-x\lambda y\quad \text{ for all }\,   x,y\in R.
		\end{equation*}		
\item
A {\bf unit-modified differential ring} $(R,D)$  is a unital modified differential ring $(R,D,\lambda)$ with $\lambda=D(1_R)$. So $D$ satisfies
\begin{equation*}\mlabel{Dda}
D(xy)=D(x)y+xD(y)-xD(1_R)y\quad \text{ for all }\,   x,y\in R.
\end{equation*}
\item
A {\bf differential Reynolds  ring} $(R, D, P)$  is a unit-modified differential ring $(R, D)$ with an additive operator $P:R\to R$ satisfying
\begin{equation*}
\begin{aligned}
P(x)P(y)=&\ P(P(x)y)+P(xP(y))-P(P(x)D(1_R)P(y)), \\
D   P=&\ \id_R\quad \text{ for all }\,  x,y\in R.
\end{aligned}
\mlabel{Dra}	
\end{equation*}	

\item
A {\bf modified  integro-differential ring} $(R, D, P)$  is a unit-modified differential ring $(R, D)$ with an additive operator $P:R\to R$, called an {\bf integration}, satisfying $D   P=\ \id_R$ and
 the {\bf modified integro-differential identity}
\begin{equation*}
\begin{aligned}
\J(x)\J(y)=&\ \J(x)y+x\J(y)-\J(xy)-\J(1_R)\big(xy-\J(xy)\big),  x,y\in R.
\end{aligned}
\mlabel{mdr}	
\end{equation*}	
where $\J:=\J_{P,D}:=P  D.$
\end{enumerate}
\mlabel{de:rrb}
\end{defn}

\begin{lemma}\mcite{GGL2}
  Let $(R, D, P)$ be a unital integro-differential commutative ring of weight $0$ with an invertible
  element $\lambda$. Then $(R, D\lambda^{-1}, \lambda P)$ is a modified integro-differential ring,
  and also a differential Reynolds ring.  \mlabel{lem:uid}
  \mlabel{lem:modintd}
\end{lemma}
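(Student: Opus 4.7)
My plan is to interpret the new operators as $\tilde D(x) := D(\lambda^{-1}x)$ and $\tilde P(x) := \lambda P(x)$, and then verify the axioms of Definition~\mref{de:rrb} one by one. The section axiom is immediate: $\tilde D\tilde P(x) = D(\lambda^{-1}\lambda P(x)) = DP(x) = x$ since $DP = \id_R$ in the original integro-differential ring. In particular $\tilde D(1_R) = D(\lambda^{-1})$, so the distinguished element of the unit-modified derivation is simply $D(\lambda^{-1})$, which we use as the weight.

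Next I would verify the unit-modified Leibniz rule $\tilde D(xy) = \tilde D(x)y + x\tilde D(y) - x\tilde D(1_R)y$. The key observation is the formula
\[
  \tilde D(x)y + x\tilde D(y) = D(\lambda^{-1}x)y + xD(\lambda^{-1}y) = D(\lambda^{-1})\cdot xy + \lambda^{-1}\bigl(D(x)y+xD(y)\bigr),
\]
and the Leibniz rule for the original derivation $D$ collapses the last parenthesis to $D(xy)$, yielding $D(\lambda^{-1})xy + \lambda^{-1}D(xy) = D(\lambda^{-1}xy) = \tilde D(xy)$; subtracting the spurious term $x\tilde D(1_R)y = xD(\lambda^{-1})y$ gives precisely the unit-modified Leibniz identity.

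The main obstacle, as I expect, is the modified integro-differential identity for $\tilde J := \tilde P\tilde D$. I would first observe that $\tilde J(x) = \lambda P(D(\lambda^{-1}x))$, and then expand $PD(\lambda^{-1}x)$ using the Leibniz rule and the hybrid Rota-Baxter axiom~\meqref{eq:ibpl} (equivalently, the weight $-1$ differential identity $J(x)J(y) + J(xy) = J(x)y + xJ(y)$ from Lemma~\mref{lem:eqid}) for the original weight-zero integro-differential structure $(R,D,P)$. The appearance of the additional term $\tilde J(1_R)(xy - \tilde J(xy))$ on the right of the modified identity should exactly account for the extra contributions coming from $D(\lambda^{-1})$ and from the fact that $\tilde P$ is no longer a strict right inverse to a classical derivation but rather to the unit-modified $\tilde D$. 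I would try to reduce both sides to a common expression involving $J$ of products of $\lambda^{-1}$, $x$, and $y$, with all terms being algebraically matched up via $PD = J$ and the evaluation axiom $\E = \id - J$ being multiplicative.

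Finally, to conclude that $(R,\tilde D, \tilde P)$ is additionally a differential Reynolds ring, I would directly verify the Reynolds axiom
\[
  \tilde P(x)\tilde P(y) = \tilde P(\tilde P(x)y) + \tilde P(x\tilde P(y)) - \tilde P(\tilde P(x)\tilde D(1_R)\tilde P(y))
\]
by computing $\tilde P(x)\tilde P(y) = \lambda^2 P(x)P(y)$ and applying the weight-zero Rota-Baxter axiom $P(x)P(y) = P(xP(y)) + P(P(x)y)$. The left-over factor of $\lambda$, when moved inside $P$ via the Leibniz rule for $D$ and the section axiom, should generate precisely the correction term involving $\tilde D(1_R) = D(\lambda^{-1})$. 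Alternatively (and more conceptually), one may invoke the general fact that every modified integro-differential ring is a differential Reynolds ring, established in~\mcite{GGL2}, in which case the Reynolds claim follows at once from the verification of parts (1)--(4) of Definition~\mref{de:rrb}.
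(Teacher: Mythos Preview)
The paper does not give its own proof of this lemma: it is quoted verbatim from~\mcite{GGL2} and used as a black box in the proof of Theorem~\mref{thm:dir1}. So there is nothing in the paper to compare against; one can only assess your proposal on its own merits.

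Your overall strategy---direct verification of the axioms in Definition~\mref{de:rrb}---is the natural one, and the section axiom is handled correctly. However, your verification of the unit-modified Leibniz rule contains an arithmetic slip. Expanding each summand via the ordinary Leibniz rule gives
\[
  \tilde D(x)y + x\tilde D(y) \;=\; D(\lambda^{-1}x)\,y + x\,D(\lambda^{-1}y)
  \;=\; 2\,D(\lambda^{-1})\,xy \;+\; \lambda^{-1}D(xy),
\]
not $D(\lambda^{-1})xy + \lambda^{-1}D(xy)$ as you wrote: each of the two summands contributes its own copy of $D(\lambda^{-1})xy$. With the correct factor of~$2$, one copy combines with $\lambda^{-1}D(xy)$ to form $\tilde D(xy) = D(\lambda^{-1}xy)$, while the other copy is exactly $x\tilde D(1_R)y$; rearranging then yields the unit-modified Leibniz rule. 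Your version instead gives $\tilde D(x)y + x\tilde D(y) = \tilde D(xy)$, which would make $\tilde D$ an \emph{ordinary} derivation, and the closing sentence about ``subtracting the spurious term'' does not rescue this.

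Beyond that, the two substantive parts of the lemma---the modified integro-differential identity for $\tilde\J$ and the differential Reynolds identity for $\tilde P$---are only sketched (``should exactly account for'', ``should generate precisely the correction term''). These are the nontrivial verifications and would have to be carried out in full. Invoking the general implication ``modified integro-differential $\Rightarrow$ differential Reynolds'' from~\mcite{GGL2} is legitimate for the last claim, but the modified integro-differential identity itself still needs a complete argument rather than a plan.
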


\begin{remark}
  As usual, the differential operator $\partial$ on $\ratvir$ can be extended to $S^{-1}\ratvir$ by
  the Leibniz rule, that is, for any $\frac{\Phi}{s}\in S^{-1}\ratvir$
  \begin{equation*}
    \bigg(\frac{\Phi}{s}\bigg)' :=\frac{s\Phi'-s'\Phi}{s^2}.
    \mlabel{eq:locder}
  \end{equation*}

\mlabel{re:locder}
\end{remark}

We are ready to state a main result for this subsection: While modifying the derivation by
\emph{pre}multiplication and the Rota-Baxter operator by \emph{post}multiplication will not produce
an integro-differential (nor a differential Rota-Baxter) structure, it is noteworthy that the idea
of Lemma~\ref{lem:modintd} does lead to a modified integro-differential (and a modified differential
Reynolds) structure.

\begin{theorem}
Let $K\in \ratvir$ and $S$ be the multiplicative subset of $\ratvir$ generated by $K$.
Define the additive operators $\dlk$ and $\ilk$ on the localization $S^{-1}\ratvir$ by
\begin{equation}
{\small
\begin{split}
\dlk: S^{-1}\ratvir \to& S^{-1}\ratvir,\quad \frac{\Phi}{s} \mapsto  \bigg(\frac{\Phi}{Ks}\bigg)'=\frac{Ks\Phi'-(Ks)'\Phi}{(Ks)^2},\\
\ilk:S^{-1}\ratvir \to& S^{-1}\ratvir,\quad \frac{\Phi}{s}  \mapsto K\expcum \frac{\Phi}{s}  :=K \sum_{i\geq 1} (-1)^{i-1}\frac{X^i}{i!} (\frac{\Phi}{s})^{(i-1)} .
\end{split}
}
\label{eq:sderx}
\end{equation}
Then $(S^{-1}\ratvir, \dlk,\ilk)$ is a modified integro-differential ring and also a differential Reynolds ring.
\mlabel{thm:dir1}
\end{theorem}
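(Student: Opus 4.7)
The strategy is a two-step reduction: first lift the integro-differential ring $(\ratvir, \D, \expcum)$ of Theorem~\mref{thm:intd} to the localization $S^{-1}\ratvir$, then apply the unit-modification principle of Lemma~\mref{lem:uid} with the invertible element $\lambda = K \in S^{-1}\ratvir$. This structural approach cleanly separates the routine bookkeeping from the nontrivial task of extending the Rota-Baxter operator to a ring of fractions.

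For the first step, the derivation $\D$ lifts to $S^{-1}\ratvir$ by the quotient rule of Remark~\mref{re:locder}, yielding a weight-zero derivation. The Rota-Baxter operator $\expcum$ lifts by the formal Joyal series
\[
\expcum(\Phi/s) \;:=\; \sum_{i\geq 1} (-1)^{i-1}\,\tfrac{X^i}{i!}\,(\Phi/s)^{(i-1)},
\]
which is well-defined because $X^i$ is concentrated in cardinality $i$, so after writing elements with a common denominator only finitely many terms contribute to each cardinality. The section axiom $\D \expcum = \id_{S^{-1}\ratvir}$ then follows by termwise differentiation, as in Lemma~\mref{lem:allint}. By Lemma~\mref{lem:eqid}\mref{it:exy}, the hybrid Rota-Baxter axiom reduces to the multiplicativity of the extended evaluation $\E = \id - \expcum \D$; since formula~\meqref{eq:eval} continues to hold in the localization, the combinatorial calculation from the proof of Theorem~\mref{thm:intd}---which uses only the analytic-exponential identity for $T_n = X^n/n!$ and is insensitive to denominators---transfers directly.

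With $(S^{-1}\ratvir, \D, \expcum)$ now an integro-differential ring of weight zero and $K \in S$ invertible by construction of $S$, Lemma~\mref{lem:uid} applies with $D = \D$, $P = \expcum$, and $\lambda = K$. It produces the modified integro-differential (and differential Reynolds) structure $(S^{-1}\ratvir,\, \D \circ K^{-1},\, K\expcum)$. Unpacking the definitions, $\D \circ K^{-1}$ sends $\Phi/s$ to $\D(K^{-1} \cdot \Phi/s) = \D(\Phi/(Ks))$, which is exactly $\dlk$, while $K\expcum$ is $\ilk$ by~\eqref{eq:sderx}. Thus $(S^{-1}\ratvir, \dlk, \ilk)$ is simultaneously a modified integro-differential ring and a differential Reynolds ring.

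The main obstacle is the first step: justifying that the formal Joyal sum coherently extends to quotients $\Phi/s$, and verifying that the multiplicativity of $\E$ survives this extension (which in turn pins down well-definedness of the lift via Lemma~\mref{lem:eqid}). Once this lifting is in hand, the remainder is pure bookkeeping against Lemma~\mref{lem:uid}, with no further combinatorial content.
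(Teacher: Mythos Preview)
Your proposal is correct and follows essentially the same two-step approach as the paper: first argue that $(S^{-1}\ratvir, \D, \expcum)$ is an integro-differential ring by the same computation as in Theorem~\mref{thm:intd}, then invoke Lemma~\mref{lem:uid} with $\lambda = K$ to obtain the modified integro-differential and differential Reynolds structures. You are in fact more explicit than the paper about the one genuine subtlety---well-definedness of the Joyal series on the localization---which the paper dispatches with the phrase ``similar to the proof of Theorem~\mref{thm:intd}.''
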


\begin{proof}
Similar to the proof of Theorem~\mref{thm:intd}, the triple $(S^{-1}\ratvir, \D, \cum_{e^X})$ is an integro-differential ring.
Then the result follows from Lemma~\mref{lem:uid}.
\end{proof}

\begin{prop}\mlabel{pp:mintdh}
With the setting of Theorem~\mref{thm:dir1}, the map sending (localized) virtual species to their generating series
$$(S^{-1}\ratvir, \dlk,\ilk) \rightarrow (\QQ[[x]], \frac{d}{dx}K(x)^{-1}, K(x)\int_0^x )$$ is a
homomorphism of modified integro-differential rings.
\end{prop}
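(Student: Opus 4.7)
The plan is to extend the integro-differential ring homomorphism of Proposition~\ref{prop:homo} to the localization, and then to observe that the modified structures on both sides arise from Lemma~\ref{lem:uid} by the same modification procedure, with parameter $K$ on the source and $K(x)$ on the target.

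First, since the generating series sends $K\in S$ to $K(x)$ and $K(x)$ is invertible in an appropriate localization $\tilde{S}^{-1}\QQ[[x]]$ (with $\tilde{S}$ denoting the multiplicative set in $\QQ[[x]]$ generated by $K(x)$), the universal property of localization extends the map $\Phi\mapsto\Phi(x)$ to a ring homomorphism $\phi\colon S^{-1}\ratvir \to \tilde{S}^{-1}\QQ[[x]]$ sending $\Phi/K^n$ to $\Phi(x)/K(x)^n$. I would then verify that $\phi$ is a homomorphism of (weight-zero) integro-differential rings from $(S^{-1}\ratvir,\D,\expcum)$, which is integro-differential by the argument indicated in the proof of Theorem~\ref{thm:dir1}, to $(\tilde{S}^{-1}\QQ[[x]],\tfrac{d}{dx},\cancum)$. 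Compatibility with $\D$ is immediate: on both sides the localized derivation is forced by the quotient rule of Remark~\ref{re:locder}, and $\phi$ already commutes with the unlocalized derivations by Proposition~\ref{prop:homo}. Compatibility with $\expcum$ is inherited from Proposition~\ref{prop:homo} via the defining series \eqref{eq:idt}, once the extension of $\expcum$ to the localization is shown to be summable and to commute with $\phi$.

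With these unmodified compatibilities in hand, I would invoke the functoriality of the construction in Lemma~\ref{lem:uid}. Applied on the source with $\lambda=K$ and on the target with $\lambda=K(x)=\phi(K)$, it produces exactly the operators appearing in the statement: $\dlk=\D K^{-1}$ corresponds to $\tfrac{d}{dx}K(x)^{-1}$, while $\ilk=K\expcum$ corresponds to $K(x)\cancum$. Explicitly, for any $\alpha\in S^{-1}\ratvir$ one computes
\begin{align*}
  \phi(\dlk(\alpha)) &= \phi\bigl(\D(K^{-1}\alpha)\bigr) = \tfrac{d}{dx}\bigl(K(x)^{-1}\phi(\alpha)\bigr),\\
  \phi(\ilk(\alpha)) &= \phi\bigl(K\expcum(\alpha)\bigr) = K(x)\,\cancum \phi(\alpha),
\end{align*}
which are precisely the target modified operators applied to $\phi(\alpha)$, confirming that $\phi$ is a homomorphism of modified integro-differential rings.

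The main technical obstacle I anticipate is the extension of $\expcum$ to $S^{-1}\ratvir$ and the verification that $\phi$ commutes with this extension: the extension requires a summability check for the series~\eqref{eq:idt} applied to a fraction, and the commutation with $\phi$ ultimately amounts to propagating the binomial identity from the proof of Proposition~\ref{prop:homo} through the quotient rule for higher derivatives. Once that is in place, the modified case adds no further computational burden beyond the straightforward chain of equalities above.
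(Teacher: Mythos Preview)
Your proposal is correct and follows essentially the same approach as the paper: both arguments reduce the claim to the unmodified homomorphism of Proposition~\ref{prop:homo} (or its straightforward extension to the localization) and then observe that the modified operators on source and target are obtained from Lemma~\ref{lem:uid} by pre/post-multiplication with $K$ and $K(x)=\phi(K)$, so that the two displayed identities $\phi(\dlk(\alpha))=\tfrac{d}{dx}(K(x)^{-1}\phi(\alpha))$ and $\phi(\ilk(\alpha))=K(x)\cancum\phi(\alpha)$ are immediate. Your treatment is in fact more careful than the paper's on two points the paper leaves implicit: the need to localize the target at $K(x)$ when $K(x)$ is not a unit in $\QQ[[x]]$, and the summability of the Joyal series~\eqref{eq:idt} applied to fractions (which the paper simply writes down in~\eqref{eq:sderx}).
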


\begin{proof}
  By Example~\mref{ex:gs} and Proposition~\mref{prop:homo}, we have
  $$\D(K^{-1}\Phi)(x)=\frac{d}{dx}\Big(K^{-1}(x)\Phi(x)\Big), \quad(K\int_{e^X}\Phi)(x)=K(x)(\int_{e^X}\Phi)(x)=K(x)\int_0^x\Phi(x)dx.$$
  Further by Lemma~\mref{lem:uid}, the formal power series ring
  $ (\QQ[[x]], \frac{d}{dx}K(x)^{-1}, K(x)\int_0^x )$ is a modified integro-differential ring.  Thus
  taking generating series is indeed a modified integro-differential ring morphism.
\end{proof}

If $K$ is a differential constant, then Theorem~\mref{thm:dir1} is reduced to the following result
leading back to the integro-differential category.

\begin{prop}
  Let $K$ be a differential constant and $S$ be the multiplicative subset of $\ratvir$ generated by
  $K$.  Then $( S^{-1}\ratvir, \dlk, \ilk)$ is an integro-differential ring with evaluation {
        $$\E\Big(\frac{\Phi}{s}\Big) =\sum_{n\geq 0} (-1)^{n}\frac{X^n}{n!} \Big(\frac{\Phi}{s}\Big)^{(n)}   \quad \text{ for all}\,
        \frac{\Phi}{s}\in S^{-1}\ratvir.$$}
  \mlabel{prop:idr}
\end{prop}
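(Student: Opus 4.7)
The strategy is to specialize the modified integro-differential structure of Theorem~\mref{thm:dir1} to the case where the scaling factor $K$ is a differential constant, observing that all the ``unit-modifications'' then vanish and the structure reduces to an ordinary integro-differential ring.

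First I would record the basic consequences of $K' = 0$: since $K$ is invertible in $S^{-1}\ratvir$, the identity $(K^{-1})' = -K^{-2}K' = 0$ shows that $K^{-1}$ is also a differential constant. Using this in the quotient-rule formula for $\dlk$ in \meqref{eq:sderx}, a short computation gives the simplification $\dlk\bigl(\tfrac{\Phi}{s}\bigr)=K^{-1}\bigl(\tfrac{\Phi}{s}\bigr)'$ for all $\tfrac{\Phi}{s}\in S^{-1}\ratvir$. In particular, $\dlk(1)=0$, so the unit-modified Leibniz rule of Theorem~\mref{thm:dir1} collapses to the ordinary Leibniz rule, i.e. $\dlk$ is a genuine derivation on $S^{-1}\ratvir$. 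The section axiom $\dlk \ilk=\id_{S^{-1}\ratvir}$ is already supplied by Theorem~\mref{thm:dir1}.

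Next I would show that the hybrid Rota-Baxter axiom holds by checking that the initialization $\J := \ilk \dlk$ satisfies $\J(1)=0$ and the ordinary weight-$(-1)$ differential operator identity of Lemma~\mref{lem:eqid}(c). The key point is that any differential constant pulls through the Joyal integral: from the defining formula \meqref{eq:idt} one has $\expcum(c\,\Psi)=c\,\expcum(\Psi)$ whenever $c'=0$, since the derivatives in the expansion do not touch $c$. Applying this with $c=K^{-1}$ yields
\[
   \J\!\left(\tfrac{\Phi}{s}\right)
  =K\expcum\!\Bigl(K^{-1}\bigl(\tfrac{\Phi}{s}\bigr)'\Bigr)
  =K\cdot K^{-1}\expcum\!\Bigl(\bigl(\tfrac{\Phi}{s}\bigr)'\Bigr)
  =\expcum\!\Bigl(\bigl(\tfrac{\Phi}{s}\bigr)'\Bigr),
\]
so that $\J$ on the localization coincides with the initialization of the integro-differential ring $(\ratvir,\D,\expcum)$ of Theorem~\mref{thm:intd} extended to $S^{-1}\ratvir$. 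In particular $\J(1)=\expcum(0)=0$, so the modified integro-differential identity of Theorem~\mref{thm:dir1} reduces term-by-term to $\J(x)\J(y)=\J(x)y+x\J(y)-\J(xy)$, which is the condition of Lemma~\mref{lem:eqid}(c).

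Finally, to derive the evaluation formula, I would write $\E=\id-\J$ and expand using the explicit form of $\expcum$:
\[
   \E\!\left(\tfrac{\Phi}{s}\right)
   =\tfrac{\Phi}{s}-\sum_{i\geq 1}(-1)^{i-1}\tfrac{X^i}{i!}\Bigl(\tfrac{\Phi}{s}\Bigr)^{(i)}
   =\sum_{n\geq 0}(-1)^{n}\tfrac{X^n}{n!}\Bigl(\tfrac{\Phi}{s}\Bigr)^{(n)},
\]
which is precisely the claimed formula and agrees with \meqref{eq:eval}. The only minor subtlety is justifying that $\expcum$ extends from $\ratvir$ to $S^{-1}\ratvir$ coherently with the extended derivation, but this is immediate from the closed-form expression in \meqref{eq:sderx} and the quotient-rule extension of $\D$ in Remark~\mref{re:locder}; no genuine obstacle arises. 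The computation is essentially a bookkeeping exercise once one recognizes that $K$ being a differential constant renders the two modifying factors $K$ and $K^{-1}$ inert under both $\D$ and $\expcum$.
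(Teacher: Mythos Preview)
Your argument is correct and takes a somewhat different route from the paper for the key step. Both proofs begin identically: use $K'=0$ to simplify $\dlk(\Phi/s)=K^{-1}(\Phi/s)'$, observe $\dlk(1)=0$ so that $\dlk$ is an honest derivation, and reduce $\J=\ilk\dlk$ to $\expcum\circ\D$ by pulling the differential constant through the Joyal integral. At this point the paper proceeds via Lemma~\mref{lem:eqid}\mref{it:exy}, verifying multiplicativity of $\E$ by redoing the binomial computation from the proof of Theorem~\mref{thm:intd} in the localized ring. You instead invoke the modified integro-differential identity already established in Theorem~\mref{thm:dir1} and observe that $\J(1)=0$ kills the extra term $\J(1)\bigl(xy-\J(xy)\bigr)$, leaving precisely condition~(c) of Lemma~\mref{lem:eqid}. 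Your approach is more structural and avoids repeating the explicit binomial manipulation; the paper's approach is self-contained and makes the parallel with \meqref{eq:eval} more visible. Both arrive at the same evaluation formula by the same final expansion.
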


\begin{proof}
  We denote $\dlk$ and $\ilk$ simply by $\derk_{K^{-1}}$ and $P_{K}$, respectively.  Since
  $\derk_{K^{-1}}(1)=0$, the operator $\derk_{K^{-1}}$ is a derivation of weight $0$.  Notice that
  $\derk_{K^{-1}}  P_K=\id_{S^{-1}\ratvir}$ and
  $\E :=\id_{S^{-1}\ratvir}-P_K  \derk_{K^{-1}}$.  By~(\mref{eq:sderx}), we have
{\small
\begin{align*}
  \E\Big(\frac{\Phi}{s}\Big) =~& \frac{\Phi}{s}   - P_K  \derk_{K^{-1}}\Big(\frac{\Phi}{s}\Big)\\
  =~& \frac{\Phi}{s}   - P_K (\frac{1}{K} \Big(\frac{\Phi}{s}\Big)' )\\
  =~& \frac{\Phi}{s}   - K\expcum(\frac{1}{K} \Big(\frac{\Phi}{s}\Big)' )\\
  =~& \frac{\Phi}{s}   - \expcum \Big(\frac{\Phi}{s}\Big)' \\
  =~& \frac{\Phi}{s}   - \sum_{n\geq1}(-1)^{n-1} {\frac{X^n}{n!}} \Big(\frac{\Phi}{s}\Big)^{(n)} \\
=~& \frac{\Phi}{s}   + \sum_{n\geq 1} (-1)^{n}{\frac{X^n}{n!}} \Big(\frac{\Phi}{s}\Big)^{(n)} \\
=~& \sum_{n\geq 0} (-1)^{n}{\frac{X^n}{n!}} \Big(\frac{\Phi}{s}\Big)^{(n)}   \quad \text{ for all}\, \frac{\Phi}{s}\in S^{-1}\ratvir,
\end{align*}
}
which is indeed analgous to~\meqref{eq:eval}. Consequently,
{\small
\begin{align*}
  \E(\frac{\Phi}{s}\frac{\Psi}{t}) =&~\sum_{n\geq 0} (-1)^{n}{\frac{X^n}{n!}} (\frac{\Phi}{s}\frac{\Psi}{t})^{(n)} = \sum_{n\geq 0} (-1)^{n}{\frac{X^n}{n!}} \sum_{i_1+i_2=n\atop i_1, i_2\geq 0} {n\choose i_1}\Big(\frac{\Phi}{s}\Big)^{(i_1)}( \frac{\Psi}{t})^{(i_2)} \\
  =&~\sum_{n\geq 0} (-1)^{n} \sum_{i_1+i_2=n\atop i_1, i_2\geq 0} {i_1+i_2 \choose i_1} {\frac{X^n}{n!}}\Big(\frac{\Phi}{s}\Big)^{(i_1)} (\frac{\Psi}{t})^{(i_2)} \\
  =&~\sum_{n\geq 0} (-1)^{n} \sum_{i_1+i_2=n\atop i_1, i_2\geq 0}{\frac{X^{i_1}}{{i_1}!}}{\frac{X^{i_1}}{{i_2}!}}\Big(\frac{\Phi}{s}\Big)^{(i_1)} (\frac{\Psi}{t})^{(i_2)}   \qquad (\text{by ~\mref{eq:binom}} ) \\
  =&~ \sum_{i_1,i_2\geq 0}  (-1)^{i_1+i_2} {\frac{X^{i_1}}{{i_1}!}}{\frac{X^{i_1}}{{i_2}!}}\Big(\frac{\Phi}{s}\Big)^{(i_1)} (\frac{\Psi}{t})^{(i_2)}\\
  =&~ \left(\sum_{i_1\geq 0}(-1)^{i_1} {\frac{X^{i_1}}{{i_1}!}} \Big(\frac{\Phi}{s}\Big)^{(i_1)} \right) \left( \sum_{i_2\geq 0}  (-1)^{i_2}{\frac{X^{i_1}}{{i_2}!}}(\frac{\Psi}{t})^{(i_2)} \right) \\
  =&~ \E\Big(\frac{\Phi}{s}\Big) \E(\frac{\Psi}{t}).
\end{align*}
}
Hence by Lemma~\mref{lem:eqid}~\mref{it:exy}, $(S^{-1}\ratvir, \derk_{K^{-1}}, P_K)$ is an
integro-differential ring.
\end{proof}

\begin{coro}
  Let $K$ be a differential constant and $S$ be the multiplicative subset of $\ratvir$ generated by
  $K$. Then $( S^{-1}\ratvir, \dlk, \ilk)$ is a differential Rota-Baxter ring.
\end{coro}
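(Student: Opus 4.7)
The plan is to invoke the two results that have just been established, requiring essentially no further computation. The preceding proposition (Proposition \mref{prop:idr}) shows that, under the stated hypotheses on $K$ and $S$, the triple $(S^{-1}\ratvir, \dlk, \ilk)$ carries the structure of an integro-differential ring of weight zero. Meanwhile, Lemma \mref{lem:intrb} asserts that every integro-differential ring (of weight $\lambda$, and in particular of weight $0$) is automatically a differential Rota-Baxter ring of the same weight.

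Thus the proof is a one-line composition: I would apply Lemma \mref{lem:intrb} to the integro-differential ring supplied by Proposition \mref{prop:idr}, concluding that $(S^{-1}\ratvir, \dlk, \ilk)$ is a differential Rota-Baxter ring. No further verification of the Rota-Baxter identity \meqref{eq:rba0} nor of the relation $\dlk \ilk = \id_{S^{-1}\ratvir}$ is needed, because both are packaged into the cited lemma (the section axiom is already part of the integro-differential structure, and the Rota-Baxter axiom is derived from the hybrid Rota-Baxter axiom in the proof of Lemma \mref{lem:intrb}).

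There is no real obstacle: the statement is a pure corollary, and its content reduces to observing that the construction in Proposition \mref{prop:idr} fits into the general hierarchy ``integro-differential ring $\Rightarrow$ differential Rota-Baxter ring'' recorded earlier. The only point worth flagging explicitly in the written proof is that we are working in weight zero, so that the weight-$\lambda$ version of Lemma \mref{lem:intrb} specializes correctly.
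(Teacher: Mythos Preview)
Your proposal is correct and matches the paper's approach exactly: the corollary is stated without proof in the paper, but the parallel Corollary~\mref{coro:RBV} is proved by precisely the combination you describe (Lemma~\mref{lem:intrb} applied to the integro-differential structure just established), so the intended argument here is likewise ``this follows from Lemma~\mref{lem:intrb} and Proposition~\mref{prop:idr}.''
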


\section{Derived structures on species from integro-differential rings}
\mlabel{s:app}

Unless otherwise specified, in this section, $(\vs, \D, \vint)$ denotes an ``integro-differential
ring of species'': either virtual species as in Theorem~\mref{thm:intd} or localized virtual species
as in Proposition~\mref{prop:idr} or virtual linear species as in Theorem~\mref{thm:idls}. Applying
the general results on integro-differential algebras from~\mcite{GKR} to these cases, we obtain a
topology and new operations on $(\vs, \D, \vint)$.

\subsection{Topology on species}
In this subsection, we turn the integro-differential ring $(\vs, \D, \vint)$ into a topological integro-differential ring.
For the topological terminology and notations not defined in this section, the author is referred to~\mcite{AP, Bour}.
Recall that $\E = \id_\vs - \vint  \D$.

\begin{defn}
	For $\Phi\in\vs$, we define {\bf the order of $\Phi$} to be $\ord(\Phi):= \min\{ n\in \NN \mid \E(\nder{\Phi}{n} )  \neq 0\}$, with the convention that $\min\emptyset = \infty$.
	\mlabel{defn:ord}
\end{defn}

\noindent Note that $\ord(0) = \infty$. For the following concepts see for example~\mcite{Bour}.

\begin{defn}
  A {\bf{filter}} on a set $\frakX$ is a set $\frakF$ of subsets of $\frakX$ which has the
  following properties:
  \begin{enumerate}
  \item Upward closed: Every subset of $\frakX$ containing a set of $\frakF$ belongs to $\frakF$.
    \mlabel{it:fil1}

  \item Downward directed: Every finite intersection of sets of $\frakF$ belongs to $\frakF$. \mlabel{it:fil2}

  \item Nontrivial: The empty set is not in $\frakF$.   \mlabel{it:fil3}
  \end{enumerate}
	\mlabel{defn:filter}
\end{defn}

\begin{defn}
  A {\bf{uniformity}} on a set $\frakX$ is a
  filter $\mathfrak{U}$ on $\frakX \times \frakX$ satisfying the following axioms:
  \begin{enumerate}
  \item Every set belonging to $\frakU$ contains the diagonal $\Delta=\{(x, x) \mid x\in \frakX\}$.

  \item If $V\in \frakU$, then $\inv{V}:=\{(x,y) \mid (y, x) \in V\} \in \frakU$.

  \item For each $V\in \frakU$, there exists $W \in \frakU$ such that $W \circ W \subseteq V,$ where
    $$W \circ W:=\{ (x,z) \mid \text{ there is } y\in \frakX \text{ such that } (x, y), (y,z)\in W\}.$$
  \end{enumerate}
  The sets of $\frakU$ are called {\bf{entourages}} of the uniformity defined on $\frakX$ by $\frakU$.
  A set endowed with a uniformity is called a {\bf{uniform space}}.
\end{defn}

\noindent The following two results are taken from~\cite[Theorem~2.16, Remark~2.18]{GKR}
and~\cite[Proposition~2.20]{GKR}.

\begin{prop}
  \begin{enumerate} \item     \label{it:unif}
  	The function
    $$d: \vs \times \vs \to \RR, \quad (\Phi, \Psi) \mapsto \dist{\Phi, \Psi} := 2^{-\ord(\Phi - \Psi)}$$
    is a pseudometric on $\vs$, inducing a uniformity on $\vs$ via the fundamental system of entourages $\frakB:= \{ B_r \mid r\in \RR_{> 0}\}$, where
    \begin{equation*}
      \mlabel{eq:bent}
      B_r := \inv{d}([0,r])= \{ (\Phi, \Psi)\in \vs\times \vs \,\mid\, \dist{\Phi,\Psi} \leq r \}.
    \end{equation*}
  \item  The uniformity on $\vs$ given in Item~\mref{it:unif} induces a unique topology on $\vs$ such that, for each $\Phi\in \vs$,
    $$\mathfrak{N}(\Phi):= \{ B_r(\Phi) \mid r\in \RR_{>0}\}$$ is the neighbourhood filter of $\Phi$
    in this topology, where $B_r(\Phi):=\{\Psi\in R \mid (\Phi, \Psi)\in B_r\}$.
  \end{enumerate}
  \mlabel{pp:ptopo}
\end{prop}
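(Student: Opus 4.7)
The plan is to establish Part~\mref{it:unif} by checking that $d$ is a pseudometric and that the sets $B_r$ verify the uniformity axioms, and then invoke the standard passage from a uniformity to a topology for Part~(b).

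The crucial preliminary step is to show that $\ord$ is ultrametric-additive, i.e.
\[
\ord(\Phi+\Psi)\ \ge\ \min\bigl(\ord(\Phi),\,\ord(\Psi)\bigr)\qquad\text{and}\qquad \ord(-\Phi)=\ord(\Phi),
\]
for all $\Phi,\Psi\in\vs$. Both follow at once from the additivity of $\D$ and $\E$ (in particular $\E$ is additive since $\E=\id_\vs-\vint\D$): if $n<\min(\ord(\Phi),\ord(\Psi))$ then $\E(\Phi^{(n)})=\E(\Psi^{(n)})=0$, whence $\E((\Phi+\Psi)^{(n)})=0$; and $\E((-\Phi)^{(n)})=-\E(\Phi^{(n)})$ vanishes exactly when $\E(\Phi^{(n)})$ does. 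With this in hand, the pseudometric properties are immediate: $d(\Phi,\Phi)=2^{-\ord(0)}=2^{-\infty}=0$; symmetry follows from $\ord(\Phi-\Psi)=\ord(\Psi-\Phi)$; and writing $\Phi-\Theta=(\Phi-\Psi)+(\Psi-\Theta)$ yields the strong (ultrametric) triangle inequality
\[
d(\Phi,\Theta)\ \le\ \max\bigl(d(\Phi,\Psi),\,d(\Psi,\Theta)\bigr)\ \le\ d(\Phi,\Psi)+d(\Psi,\Theta).
\]

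For the uniformity, I would verify the fundamental-system-of-entourages axioms for $\frakB=\{B_r\mid r\in\RR_{>0}\}$: (i) $\Delta\subseteq B_r$ since $d(\Phi,\Phi)=0\le r$; (ii) $B_r=\inv{B_r}$ by symmetry of $d$; (iii) $B_r\circ B_r\subseteq B_r$ is exactly the ultrametric inequality; and (iv) downward directedness is witnessed by $B_{\min(r_1,r_2)}\subseteq B_{r_1}\cap B_{r_2}$. These properties ensure that $\frakU:=\{V\subseteq\vs\times\vs\mid V\supseteq B_r\text{ for some }r>0\}$ is a uniformity on $\vs$ with $\frakB$ as a fundamental system of entourages.

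For Part~(b), I would invoke the standard construction (see e.g.\ \cite{Bour}): every uniformity $\frakU$ on a set $\frakX$ induces a unique topology in which, for each $x\in\frakX$, the neighbourhood filter at $x$ is $\{V(x)\mid V\in\frakU\}$ where $V(x)=\{y\mid (x,y)\in V\}$. Applied here, a fundamental system of neighbourhoods of $\Phi$ is $\{B_r(\Phi)\mid r>0\}$, giving the claimed $\mathfrak{N}(\Phi)$ as the neighbourhood filter basis (and in fact a filter, since $B_{r_1}(\Phi)\cap B_{r_2}(\Phi)\supseteq B_{\min(r_1,r_2)}(\Phi)$). The main conceptual hurdle is the ultrametric step for $\ord$; once that is in place, the remainder reduces to unpacking definitions and citing the standard uniformity-to-topology correspondence.
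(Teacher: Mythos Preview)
Your argument is correct. The ultrametric inequality for $\ord$ is indeed the only substantive point, and your derivation of it from the additivity of $\D$ and $\E$ is exactly right; the rest is routine unpacking of the pseudometric and uniformity axioms, together with the standard Bourbaki construction of the induced topology.

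By way of comparison: the paper does not actually prove this proposition in situ. It is imported verbatim from~\cite[Theorem~2.16, Remark~2.18, Proposition~2.20]{GKR}, where the result is established for arbitrary integro-differential rings, and then specialised here to the species rings $(\vs,\D,\vint)$. Your proposal supplies the direct verification that~\cite{GKR} presumably contains; the advantage of the paper's approach is economy (one citation covers all three species rings at once), while yours makes the argument self-contained and highlights that nothing beyond additivity of $\E$ and $\D$ is needed for the topology itself. One small quibble: the set $\mathfrak{N}(\Phi)=\{B_r(\Phi)\mid r>0\}$ is, strictly speaking, a neighbourhood \emph{basis} rather than the full neighbourhood filter (it is not upward closed), so your parenthetical remark that it is ``in fact a filter'' overstates slightly---but this is a looseness already present in the paper's statement, not an error in your reasoning.
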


We will use this topology throughout the remainder of this section.

\begin{theorem}
  The $(\vs, \D, \vint)$ is a topological integro-differential ring. In other words, the operations
  of sum, subtraction, multiplication, derivation and integration are continuous.
  \mlabel{thm:cont4}
\end{theorem}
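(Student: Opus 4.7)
The plan is to invoke the general theory of integro-differential rings with order filtrations from \mcite{GKR}, applied to the order function of Definition~\mref{defn:ord}. The continuity of each of the five ring/operator maps will follow once one verifies that $\ord$ behaves as a valuation-type filtration with respect to all of them. The bulk of the work is therefore establishing the following four inequalities for all $\Phi,\Psi\in\vs$: additivity $\ord(\Phi+\Psi)\geq\min\{\ord\Phi,\ord\Psi\}$; sub-multiplicativity $\ord(\Phi\Psi)\geq\ord\Phi+\ord\Psi$; derivation shift $\ord(\D\Phi)\geq\ord\Phi-1$; and integration shift $\ord(\vint\Phi)\geq\ord\Phi+1$.

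The additive estimate is immediate from additivity of $\E$ and $\D$. The derivation estimate follows from $\E((\D\Phi)^{(n)})=\E(\Phi^{(n+1)})$, and the integration estimate is a consequence of $\D\vint=\id_{\vs}$ (so $(\vint\Phi)^{(n)}=\Phi^{(n-1)}$ for $n\geq 1$) together with $\E\vint=0$, which itself comes from the defining relation $\vint=\J\vint+\E\vint=\vint\D\vint+\E\vint$ in Definition~\mref{de:intdiff}(c). The sub-multiplicative estimate is the crucial one: using the Leibniz rule iteratively gives
\[
(\Phi\Psi)^{(n)}=\sum_{k=0}^{n}\binom{n}{k}\Phi^{(k)}\Psi^{(n-k)},
\]
and applying $\E$ and invoking its multiplicativity (Lemma~\mref{lem:eqid}(b)) yields
\[
\E\bigl((\Phi\Psi)^{(n)}\bigr)=\sum_{k=0}^{n}\binom{n}{k}\E(\Phi^{(k)})\E(\Psi^{(n-k)}).
\]
For $n<\ord\Phi+\ord\Psi$, every term vanishes since each pair $(k,n-k)$ satisfies $k<\ord\Phi$ or $n-k<\ord\Psi$.

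With these four inequalities in hand, continuity of each operation at the origin (and hence, by translation invariance, everywhere) reduces to simple checks. For addition and subtraction, $\ord\bigl((\Phi_1-\Phi_2)\pm(\Psi_1-\Psi_2)\bigr)\geq\min\{\ord(\Phi_1-\Phi_2),\ord(\Psi_1-\Psi_2)\}$ directly bounds distances. For multiplication, one writes $\Phi_1\Psi_1-\Phi_2\Psi_2=(\Phi_1-\Phi_2)\Psi_1+\Phi_2(\Psi_1-\Psi_2)$ and uses sub-multiplicativity together with the trivial bound $\ord\geq 0$ on the unperturbed factors. For $\D$ and $\vint$, the shift inequalities give $d(\D\Phi,\D\Psi)\leq 2\,d(\Phi,\Psi)$ and $d(\vint\Phi,\vint\Psi)\leq\tfrac{1}{2}d(\Phi,\Psi)$, both obviously continuous.

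The main obstacle I anticipate is the sub-multiplicative bound in the localized and linear cases: for virtual set species the argument goes through via the multiplicativity of $\E$ in Theorem~\mref{thm:intd}; for virtual linear species one uses $\E(\Phi)=\Phi(0)$ from Lemma~\mref{lem:nder0}(a), combined with Lemma~\mref{lem:nder0}(d), to confirm that $\ord$ is a genuine valuation (not merely a pseudo-valuation). In the localized virtual species setting (Proposition~\mref{prop:idr}), the multiplicativity of $\E$ is again in force, so the same argument survives; one only needs to check that derivation and integration on the localization still respect the filtration, using the quotient rule~\meqref{eq:locder} and the formula defining $\ilk$ in~\meqref{eq:sderx}. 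Once these are confirmed, all three species incarnations of $(\vs,\D,\vint)$ fit into the general framework, and Theorem~\mref{thm:cont4} follows uniformly.
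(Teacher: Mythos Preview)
Your proposal is correct and follows essentially the same route as the paper: both treat the result as an instance of the general theory of integro-differential rings developed in~\mcite{GKR}, applied to the order function of Definition~\mref{defn:ord}. The paper in fact states Theorem~\mref{thm:cont4} without proof, relying on the blanket remark at the start of Section~\mref{s:app} that all results there are obtained by ``applying the general results on integro-differential algebras from~\mcite{GKR}''; your write-up simply makes explicit the valuation-type inequalities (additivity, sub-multiplicativity via the Leibniz rule and multiplicativity of~$\E$, and the derivation/integration shifts) that underlie the cited machinery.
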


We are in a position to show that the space of virtual linear species is a complete metric
space. For this, we need the following lemma.

\begin{lemma}
	\mlabel{lem:dcont}
	Let $\Phi, \Psi\in \lvir$ and $n\geq 0$. Then
	$$\dist{\Phi, \Psi} \leq 2^{-(n+1)}\,\text{ if and only if }\, \Phi[i] = \Psi[i] \quad\text{ for } i=0, \ldots, n.$$
\end{lemma}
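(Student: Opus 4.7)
The plan is to unravel the definitions in a chain of equivalences, reducing everything to Lemma~\mref{lem:nder0}\mref{it:eton}. First, setting $\chi := \Phi - \Psi \in \lvir$, I would observe that by Definition~\mref{defn:ord} together with Proposition~\mref{pp:ptopo}, the pseudometric satisfies $\dist{\Phi, \Psi} = 2^{-\ord(\chi)}$, so the inequality $\dist{\Phi, \Psi} \le 2^{-(n+1)}$ is equivalent to $\ord(\chi) \ge n+1$.

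The second step is to apply the definition of the order directly: $\ord(\chi) \ge n+1$ says precisely that the minimum $i \in \NN$ with $\E(\chi^{(i)}) \neq 0$ is at least $n+1$, i.e.
\[
  \E(\chi^{(i)}) = 0 \quad \text{ for every } i = 0, 1, \ldots, n.
\]
Here $i = 0$ is included by interpreting $\chi^{(0)} = \chi$.

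The third step is to invoke Lemma~\mref{lem:nder0}\mref{it:eton} for the virtual linear species $\chi$: for each $i \ge 0$ one has $\E(\chi^{(i)}) = 0$ iff $\chi[i] = \emptyset$, where $\chi[i]$ abbreviates $\chi[[i]]$. Writing $\chi = \chi^+ - \chi^-$ in reduced form and using that evaluation on a fixed totally ordered set is additive on sums of species, the condition $\chi[i] = \emptyset$ is in turn equivalent to $\Phi[i] = \Psi[i]$. Chaining the equivalences across all $i = 0, \ldots, n$ yields the lemma.

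Each step is a direct unpacking of a definition or an appeal to an earlier result, so no substantial obstacle is expected. The one subtlety worth checking is the passage of Lemma~\mref{lem:nder0}\mref{it:eton} from the set-species statement to its virtual-species incarnation, i.e.\ that the identity $\E(\chi^{(i)})[\ell] = \chi[i]$ (extracted from the proof of that lemma) extends to differences $\chi^+ - \chi^-$; this is immediate from the additive extension of $\D$ and $\E$ to $\lvir$ recorded in Lemma~\mref{lem:nder0}\mref{it:ejl}.
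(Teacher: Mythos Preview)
Your proposal is correct and follows essentially the same chain of equivalences as the paper's proof: unwind the pseudometric to the order condition, unwind the order condition to vanishing of $\E(\chi^{(i)})$ for $i\le n$, and invoke Lemma~\mref{lem:nder0}\mref{it:eton}. One small remark: the extra care you take about passing to virtual species is unnecessary, since Lemma~\mref{lem:nder0} is already stated for $\Phi\in\lvir$.
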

\begin{proof}
	It follows from Lemma~\mref{lem:nder0}~\mref{it:eton} that
	\begin{align*}
		&\dist{\Phi, \Psi} = 2^{-\ord(\Phi - \Psi)} \leq 2^{-(n+1)} \Leftrightarrow
		\ord(\Phi - \Psi) \geq n+1 \\
		\Leftrightarrow\quad & \E(\nder{(\Phi - \Psi)}{i}) = 0  \Leftrightarrow (\Phi - \Psi)[i] = \emptyset
		\Leftrightarrow \Phi[i] = \Psi[i] \quad\text{ for } i=0, \ldots, n,
	\end{align*}
	as required.
\end{proof}

\begin{prop}
	Let $(\lvir, \D, \vint)$ be the integro-differential ring of virtual linear species. Then
	\begin{enumerate}
		\item The function
		$d: \lvir \times \lvir \to\RR,\, (\Phi, \Psi) \mapsto \dist{\Phi, \Psi}= 2^{-\ord(\Phi - \Psi)}$
		is a metric. \mlabel{it:lmetric}
		
		\item The metric space $(\lvir, d)$ is complete. \mlabel{it:lcomp}
	\end{enumerate}
	\mlabel{pp:lcomp}
\end{prop}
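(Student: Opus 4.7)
For part (a), since Proposition~\mref{pp:ptopo} already exhibits $d$ as a pseudometric, I only need to establish positive definiteness. If $d(\Phi,\Psi)=0$, then $\ord(\Phi-\Psi)=\infty$, which is to say $\E(\partial^n(\Phi-\Psi))=0$ for every $n\ge 0$. Lemma~\mref{lem:nder0}~\mref{it:metric} then concludes $\Phi-\Psi=0$.

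For part (b), the plan is to produce the limit of a Cauchy sequence by stabilizing cardinality-by-cardinality. Let $(\Phi_k)_{k\ge 0}\subset\lvir$ be Cauchy. Combining the Cauchy property with Lemma~\mref{lem:dcont}, for each $n\ge 0$ there exists $N(n)$ such that $\Phi_k[i]=\Phi_l[i]$ whenever $0\le i\le n$ and $k,l\ge N(n)$. Fixing $i$, the virtual cardinality $a_i:=\lim_k|\Phi_k[i]|\in\ZZ$ is thus well-defined. I then realize the sequence $(a_n)_{n\ge 0}$ as the cardinality sequence of a virtual linear species $\Phi\in\lvir$, using the fact---noted in the discussion preceding Theorem~\mref{thm:idls}---that two virtual linear species are isomorphic iff their generating series agree, so they are determined by their cardinality sequences, and any integer sequence is realizable (decompose $a_n=a_n^+-a_n^-$ with $a_n^\pm\in\NN$, take $\Phi^\pm$ to be any linear species with $|\Phi^\pm[n]|=a_n^\pm$, and set $\Phi:=\Phi^+-\Phi^-$).

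To verify $\Phi_k\to\Phi$, given $\epsilon>0$ I pick $n$ with $2^{-(n+1)}\le\epsilon$ and take $N:=\max_{0\le i\le n}N(i)$. Then for $k\ge N$ we have $\Phi_k[i]=a_i=\Phi[i]$ for each $0\le i\le n$, so Lemma~\mref{lem:dcont} gives $d(\Phi_k,\Phi)\le 2^{-(n+1)}\le\epsilon$, establishing convergence and hence completeness. The one subtle point is the realizability step, but this reduces to the standard identification of $\lvir$ with integer sequences via generating series, so no deep obstacle remains; the rest is essentially a reading off of Lemmas~\mref{lem:nder0} and~\mref{lem:dcont}.
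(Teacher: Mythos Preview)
Your proof is correct and follows essentially the same route as the paper: part~(a) via Lemma~\mref{lem:nder0}\mref{it:metric} together with the pseudometric from Proposition~\mref{pp:ptopo}, and part~(b) by using Lemma~\mref{lem:dcont} to stabilize the Cauchy sequence cardinality-by-cardinality and then assembling the limit species. The only cosmetic difference is that you pass explicitly through the integer cardinality sequence $(a_i)$ and invoke the identification $\lvir \cong \ZZ[[X]]$ to realize it, whereas the paper simply sets $\Phi[i] := \Phi_{N_n+1}[i]$ directly; both amount to the same construction.
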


\begin{proof}
	\mref{it:lmetric} Let $\Phi, \Psi\in \lvir$ with $\dist{\Phi, \Psi} = 0$. Then
	$$\ord(\Phi - \Psi) = \infty\,\text{ and so } \E(\nder{(\Phi - \Psi)}{n}) = 0\quad\text{ for all } \, n\geq 0.$$
	Hence $\Phi = \Psi$ by Lemma~\mref{lem:nder0}~\mref{it:metric}.
	Therefore by Proposition~\mref{pp:ptopo}~\mref{it:unif}, $d$ is a metric.
	
	\mref{it:lcomp} Let $(\Phi_k)_{k\geq 0}$ be a Cauchy sequence of virtual linear species.
	Then for each $n\geq 0$, there is $N_n\in \NN$ such that
	$$\dist{\Phi_p, \Phi_q} = w^{-\ord(\Phi_p - \Phi_q)} \leq 2^{-(n+1)}\quad \text{ for all}\, p, q > N_n.$$
	From Lemma~\mref{lem:dcont}, it follows that
	$\Phi_p[i] = \Phi_q[i]$ for $i = 0, \ldots, n$ and $p, q>N_n$.
	So we can define $\Phi\in \lvir$ by setting
	$$\Phi[i]:= \Phi_{N_n+1}[i] = \Phi_{N_n+2}[i] = \cdots \quad \text{ for }i = 0, \ldots, n.$$
	We claim that $\lim\limits_{k\to \infty} \Phi_k = \Phi$.
	Indeed, for any small $r>0$ with $r\in \RR$. Take $n:= \lceil \log_2(1/r)\rceil -1\in \ZZ$. From the above discussion, there is $N_n \in \ZZ_{\geq0}$ such that
	$$\Phi[i]= \Phi_{N_n+1}[i] = \Phi_{N_n+2}[i] = \cdots \quad \text{ for }i = 0, \ldots, n.$$
	So from Lemma~\mref{lem:dcont},
	$$\dist{\Phi, \Phi_k} \leq 2^{-(n+1)} = 2^{-\lceil \log_2(1/r) \rceil } \leq 2^{- \log_2(1/r)} =  r\quad \text{ for }\, k > N_n,$$
	as required.
\end{proof}

\subsection{New operations on species}
In this section, we employ the integro-differential structure to define some new operations on the
topological ring of species $(\vs, \D, \vint)$. As above, this comprises virtual (set and linear)
species as well as localized virtual (set) species.

\subsubsection{Divided powers}
The next definition and proposition are taken from~\cite[Definition~3.1]{GKR} and~\cite[Propositions~3.2, 3.3,
3.4]{GKR}. The reason for the name of divided powers is shown in Item~\mref{it:dividp} of the
proposition (the classical case being~$\divid{x}{n} = x^n/n!$).

\begin{defn}
  \mlabel{defn:divid}
  For $\Phi\in \vs$, define $\divid{\Phi}{n}\in \vs$, called {\bf the $n$-th divided power} of
  $\Phi$, inductively by
  $$\divid{\Phi}{0} = 1\,\text{ and }\, \divid{\Phi}{n} = \vint( \divid{\Phi}{n-1} \der{\Phi})\quad\text{ for }\, n\geq 1.$$
\end{defn}

\begin{prop}
  Let $\Phi, \Psi\in \vs$ and $\Omega\in \ker \E  $. Then
  \begin{enumerate}
  \item For each $n\geq 0$, the mapping $\vs \to \vs$, $\Phi\mapsto \divid{\Phi}{n}$ is  continuous.

  \item $(\divid{\Phi}{0})' = 0$ and $(\divid{\Phi}{n})' = \divid{\Phi}{n-1} \der{\Phi}$ for $n\geq 1$. \mlabel{it:dividd}

  \item $\E(\divid{\Phi}{0}) = 1$ and $\E(\divid{\Phi}{n}) = 0$ for $n\geq 1$. \mlabel{it:divide}

  \item $n! \divid{\Omega}{n} = \Omega^n$ for $n\geq 0$. \mlabel{it:dividp}

  \item $\divid{\Phi}{n} \divid{\Phi}{m} = \binc{n+m}{n} \divid{\Phi}{n+m}.$ \mlabel{it:dividpr}
  \end{enumerate}
  \mlabel{pp:dividp5}
\end{prop}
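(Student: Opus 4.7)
The plan is to handle each of the five items in turn, relying on the integro-differential axioms (section axiom $\D\vint = \id_\vs$, Leibniz rule, multiplicativity of $\E$ from Lemma~\mref{lem:eqid}\mref{it:exy}) as the main tools, together with induction on $n$. The unifying principle is the decomposition $\Phi = \vint(\D\Phi) + \E(\Phi) = \J(\Phi) + \E(\Phi)$ valid in any integro-differential ring, which supplies a uniqueness statement: two elements of $\vs$ coincide as soon as they share the same derivative and the same evaluation.

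For item (a), since multiplication, $\D$, and $\vint$ are continuous by Theorem~\mref{thm:cont4}, an induction on $n$ shows that $\Phi \mapsto \divid{\Phi}{n}$ is continuous, being a composition of continuous operations. Item (b) is immediate from the definition and the section axiom: for $n \ge 1$, $(\divid{\Phi}{n})' = \D\vint(\divid{\Phi}{n-1}\der{\Phi}) = \divid{\Phi}{n-1}\der{\Phi}$, while $(\divid{\Phi}{0})' = \D(1) = 0$. Item (c) uses the identity $\E\vint = \vint - \vint\D\vint = 0$, which follows from $\D\vint = \id_\vs$; this instantly gives $\E(\divid{\Phi}{n}) = 0$ for $n \ge 1$, while $\E(\divid{\Phi}{0}) = \E(1) = 1$.

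Item (d) proceeds by induction on $n$, with the trivial case $n = 0$. For the inductive step, the Leibniz rule gives $(\Omega^{n+1})' = (n+1)\,\Omega^n\,\der{\Omega}$, hence $\J(\Omega^{n+1}) = (n+1)\,\vint(\Omega^n\der{\Omega})$. Because $\E$ is multiplicative and $\Omega \in \ker\E$, we have $\E(\Omega^{n+1}) = \E(\Omega)^{n+1} = 0$, so $\J(\Omega^{n+1}) = \Omega^{n+1}$. Combining these with the induction hypothesis $n!\divid{\Omega}{n} = \Omega^n$ yields $(n+1)!\,\divid{\Omega}{n+1} = (n+1)\,\vint(\Omega^n \der{\Omega}) = \Omega^{n+1}$.

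For item (e), I would use double induction on $(n,m)$, the base cases $n=0$ or $m=0$ being immediate from $\divid{\Phi}{0} = 1$. In the inductive step with $n,m \ge 1$, set $f := \divid{\Phi}{n}\divid{\Phi}{m}$ and $g := \binc{n+m}{n}\divid{\Phi}{n+m}$. Applying item (b) to the Leibniz rule, together with Pascal's identity $\binc{n+m-1}{n-1} + \binc{n+m-1}{n} = \binc{n+m}{n}$ and the induction hypothesis applied to the two pairs $(n-1,m)$ and $(n,m-1)$, yields $\D f = \D g$. Since $\E$ is multiplicative, item (c) gives $\E(f) = \E(\divid{\Phi}{n})\E(\divid{\Phi}{m}) = 0 = \E(g)$. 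The uniqueness principle $x = \vint(\D x) + \E(x)$ then forces $f = g$.

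The only mildly delicate part is the double induction in item (e), specifically keeping Pascal's identity lined up with the inductive application of item (b); the rest is a direct unwinding of the definition and the integro-differential axioms.
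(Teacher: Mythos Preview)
Your argument is correct in all five parts: the inductive use of the section axiom for (b), the computation $\E\vint=0$ for (c), the multiplicativity of $\E$ combined with $\J+\E=\id$ for (d), and the double induction via Pascal's identity and the uniqueness principle $x=\vint(\D x)+\E(x)$ for (e) are all sound. Note that the paper itself does not reprove this proposition; it simply imports the result from~\cite[Propositions~3.2, 3.3, 3.4]{GKR}, where it is established for arbitrary integro-differential algebras. Your write-up is essentially a self-contained reconstruction of that argument, so there is no substantive divergence to discuss.
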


\begin{coro}\mlabel{coro:exp}
  For any $\Phi\in \vs$, we have
  $$n! \divid{\Phi}{n}=\J(\Phi)^n.$$
  Moreover, Proposition~\mref{pp:dividp5}~\mref{it:dividp} holds for $\Omega=\J(\Psi)$ with any
  $\Psi\in\vs$.
\end{coro}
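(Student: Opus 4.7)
The plan is to reduce the corollary to Proposition~\mref{pp:dividp5}~\mref{it:dividp} by showing that for any $\Phi\in\vs$ the divided powers satisfy $\divid{\Phi}{n}=\divid{\J(\Phi)}{n}$ and that $\J(\Phi)$ lies in $\ker\E$.

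First I would establish the auxiliary fact $\J(\vs)\subseteq\ker\E$. This is immediate from the section axiom $\D\vint=\id_\vs$: it gives $\J^2=\vint\D\vint\D=\vint\D=\J$, and therefore
\[
  \E\J=(\id_\vs-\J)\J=\J-\J^2=0,
\]
so $\E(\J(\Psi))=0$ for every $\Psi\in\vs$. This already yields the ``moreover'' clause: taking $\Omega=\J(\Psi)$ gives $\Omega\in\ker\E$, so Proposition~\mref{pp:dividp5}~\mref{it:dividp} applies and $n!\,\divid{\Omega}{n}=\Omega^n$.

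Next I would prove the identity $\divid{\Phi}{n}=\divid{\J(\Phi)}{n}$ by induction on $n$. The key observation is that $\J(\Phi)$ has the same derivative as $\Phi$, because $(\J(\Phi))'=\D\vint\D\Phi=\D\Phi=\Phi'$ by the section axiom. The base case $n=0$ holds by definition since both sides equal $1$. For the inductive step, assuming $\divid{\Phi}{n-1}=\divid{\J(\Phi)}{n-1}$, Definition~\mref{defn:divid} gives
\[
  \divid{\Phi}{n}
  =\vint\bigl(\divid{\Phi}{n-1}\Phi'\bigr)
  =\vint\bigl(\divid{\J(\Phi)}{n-1}(\J(\Phi))'\bigr)
  =\divid{\J(\Phi)}{n}.
\]

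With these two steps in place, combining them is immediate: applying Proposition~\mref{pp:dividp5}~\mref{it:dividp} to $\Omega=\J(\Phi)\in\ker\E$ and then invoking the identity just proved gives
\[
  n!\,\divid{\Phi}{n}=n!\,\divid{\J(\Phi)}{n}=\J(\Phi)^n,
\]
as claimed. I do not foresee a serious obstacle; the only subtlety is recognizing that the recursive definition of $\divid{\Phi}{n}$ depends on $\Phi$ only through $\Phi'$ (hence is insensitive to replacing $\Phi$ by $\J(\Phi)$), and that $\J$ is idempotent under the section axiom — both of which are transparent consequences of the integro-differential axioms.
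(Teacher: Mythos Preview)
Your proof is correct, but it follows a different route from the paper's. The paper observes that $\divid{\Phi}{1}=\vint(\Phi')=\J(\Phi)$ and then invokes Proposition~\mref{pp:dividp5}\mref{it:dividpr} (the binomial identity $\divid{\Phi}{n}\divid{\Phi}{m}=\binom{n+m}{n}\divid{\Phi}{n+m}$): taking $m=1$ and inducting gives $\J(\Phi)^n=(\divid{\Phi}{1})^n=n!\,\divid{\Phi}{n}$ directly. You instead reduce to item~\mref{it:dividp} by first proving the invariance $\divid{\Phi}{n}=\divid{\J(\Phi)}{n}$, exploiting that the recursion in Definition~\mref{defn:divid} sees only $\Phi'$ and that $(\J(\Phi))'=\Phi'$. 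Your approach isolates a clean conceptual point (divided powers depend only on the derivative), while the paper's is shorter once the multiplicative identity~\mref{it:dividpr} is available. Both arguments ultimately need $\J(\vs)\subseteq\ker\E$ (equivalently $\J^2=\J$), which you make explicit and the paper uses tacitly in deducing the ``moreover'' clause from the first part.
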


\begin{proof}
  The first part follows from $\divid{\Phi}{1}=\J(\Phi)$ and
  Proposition~\ref{pp:dividp5}~\mref{it:dividpr}. For $\Psi\in\vs$, we have
  $$ n! \divid{\Omega}{n} =  n! \divid{\J(\Psi)}{n}=\J(\Phi)^n=\Omega^n,$$
  as required.
\end{proof}

\subsubsection{Composition}

We introduce some new operations on the completion $(\cvs, \D,\vint)$ of $(\vs, \D, \vint)$ whose
evaluation shall be written as $\E:= \id_{\cvs} - {\int}  \D$. Notice that the space of virtual
linear species $\lvir$ is already complete by Proposition~\mref{pp:lcomp}~\mref{it:lcomp}.

We recall from~\cite[\S2.2]{BLL} that the \name{functorial composition} $\bcom$ on set species is
defined by
\begin{equation}\mlabel{eq:fcombll}
  \Phi \bcom \Psi[U]:=\Phi[\Psi[U]] \quad{\rm\ for\ }\, \Phi, \Psi\in \vs.
\end{equation}
On the other hand, an \name{integro-differential composition} is defined in~\mcite{GKR} by
$$ \Phi \com \Psi: = \sum_{n\geq 0}  {\E}(\nder{\Phi}{n}) \divid{\Psi}{n}\quad {\rm\ for\ }\, \Phi\in \cvs,\Psi\in\ker \E.$$
It is known~\cite[(1.2.22)]{BLL} that two species $F$ and $G$ are equipotent $F\equiv G$ if and only if 
$F(x)=G(x)$. The two compositions are in fact equipotent, as we will now show. Note that the next
two results only make sense for virtual set species (in so far as~$\bcom$ is only defined for
these).

\begin{theorem}
\mlabel{pp:funcomp}
  Let $(\vs, \D, \vint)$ be the integro-differential ring of virtual set species in
  Theorem~\mref{thm:intd}.  For $\Phi\in \vs$ and $\Psi\in \vs \cap\ker \E $, we have
$$\Phi\bcom  \Psi\equiv \Phi \com \Psi.$$
\end{theorem}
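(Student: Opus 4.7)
The strategy is to pass to exponential generating series and show that both $\Phi \bcom \Psi$ and $\Phi \com \Psi$ map to the same element of $\QQ[[x]]$; by definition of equipotency ($F \equiv G$ iff $F(x) = G(x)$), this yields the desired conclusion. The key enabling fact is Proposition~\mref{prop:homo}, which states that $\Phi \mapsto \Phi(x)$ is a homomorphism of integro-differential rings from $(\ratvir, \D, \comcum)$ to $(\QQ[[x]], \tfrac{d}{dx}, \int_0^x)$; in particular, this homomorphism intertwines the species evaluation $\E = \id - \comcum  \D$ with the power-series evaluation $f \mapsto f(0)$.

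Next, I would exploit the hypothesis $\Psi \in \ker\E$: since $\J = \id - \E$, we have $\J(\Psi) = \Psi$, and Proposition~\mref{pp:dividp5}\mref{it:dividp} gives $n!\,\divid{\Psi}{n} = \Psi^n$ for every $n \geq 0$, whence
\[
\divid{\Psi}{n}(x) = \frac{\Psi(x)^n}{n!}.
\]
Applying the homomorphism to the definition $\Phi \com \Psi = \sum_{n \ge 0} \E(\Phi^{(n)}) \divid{\Psi}{n}$ and using that $(\Phi^{(n)})(x) = \Phi(x)^{(n)}$ and $\E(\Phi^{(n)})(x) = \Phi^{(n)}(0)$, I obtain
\[
(\Phi \com \Psi)(x) = \sum_{n \ge 0} \Phi^{(n)}(0)\, \frac{\Psi(x)^n}{n!} = \Phi(\Psi(x)),
\]
where the last step is the Taylor expansion of $\Phi(x)$ around $0$ evaluated at $\Psi(x)$; this expansion converges in $\QQ[[x]]$ because $\Psi \in \ker\E$ forces $\Psi(0) = \E(\Psi)(x) = 0$.

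It remains to identify $(\Phi \bcom \Psi)(x)$ with the same substitution $\Phi(\Psi(x))$, which is the standard generating-series identity for composition of species under the hypothesis $\Psi(0) = 0$, as tabulated in Example~\mref{ex:gs} (and attributed there to \cite[Proposition~1.4.2(a)]{BLL}). Combining the two computations gives $(\Phi \bcom \Psi)(x) = (\Phi \com \Psi)(x)$, hence $\Phi \bcom \Psi \equiv \Phi \com \Psi$.

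The main technical point is the interplay between the three abstract operators $\E$, $\divid{\cdot}{n}$, and $\D$ on species and their concrete counterparts on power series: once Proposition~\mref{prop:homo} is used to translate $\E$ into evaluation at $0$ and $n!\,\divid{\Psi}{n}$ into $\Psi^n$, the sum defining $\Phi \com \Psi$ becomes literally the Taylor development of $\Phi(x)$ at $\Psi(x)$, and the whole argument collapses to an elementary identity on formal power series. No further structural input about $\bcom$ is needed beyond its known generating-series formula.
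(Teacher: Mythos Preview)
Your argument conflates two distinct compositions in species theory. The operation $\bcom$ in the theorem is \emph{functorial composition}, defined immediately before the statement by $(\Phi \bcom \Psi)[U] := \Phi[\Psi[U]]$; the identity $(F \circ G)(x) = F(G(x))$ that you cite from Example~\mref{ex:gs} (and \cite[Proposition~1.4.2(a)]{BLL}) pertains instead to \emph{partitional composition} (substitution) $\circ$. For functorial composition the generating series is completely different: writing $\Phi(x) = \sum_n \phi_n\, x^n/n!$ and $\Psi(x) = \sum_n \psi_n\, x^n/n!$, one has
\[
(\Phi \bcom \Psi)(x) \;=\; \sum_{n \ge 0} \phi_{\psi_n}\,\frac{x^n}{n!}
\]
(see~\cite[\S2.2, (4)]{BLL}). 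So your final step ``$(\Phi \bcom \Psi)(x) = \Phi(\Psi(x))$'' is unsupported, and while your computation $(\Phi \com \Psi)(x) = \Phi(\Psi(x))$ is fine, it does not by itself match the functorial-composition series.

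The paper follows the same overall strategy---pass to generating series and compare---but invokes the correct generating-series formula for $\bcom$ (citing \cite[p.~71~(4), p.~73]{BLL}) rather than the one for $\circ$, and then argues the equality
\[
\sum_{n\ge 0} \frac{\phi_{\psi_n}}{n!}\, x^n \;=\; \sum_{n\ge 0} \frac{\phi_n}{n!}\, \Psi(x)^n.
\]
This bridging identity between the functorial-composition series and the Taylor-type series is the crux of the proof, and it is exactly what your proposal sidesteps by citing the wrong formula.
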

\begin{proof}
We have
$$  \Phi \com \Psi=\sum_{n\geq 0}  {\E}(\nder{\Phi}{n}) \divid{\Psi}{n}=\sum_{n\geq 0} \Phi^{(n)}(0)\frac{\Psi^n}{n!}=\sum_{n\geq 0} \frac{\Phi^{(n)}(0)}{n!}\Psi^n.$$
On the one hand, the series associated to the species $\Phi\bcom \Psi$ in the sense of ~(\mref{eq:fcombll}) is
$$(\Phi\bcom \Psi)(x)\overset{\text{\cite[p.73]{BLL}}}{=}\Phi(x)\bcom \Psi(x)\overset{\text{\cite[p.71~(4)]{BLL}}}{=}\sum_{n\geq0,m:={ \Psi(x)}^{(n)}|_{x=0}}\frac{{ \Phi(x)}^{(m)}|_{x=0}}{n!}x^n=\sum_{n\geq 0} \frac{{ \Phi(x)}^{(n)}|_{x=0}}{n!}\Psi(x)^n.$$
On the other hand, the series associated to the species $\Phi \com \Psi$ is
{\small $$\Big(\sum_{n\geq 0}  {\E}(\nder{\Phi}{n}) \divid{\Psi}{n}\Big)(x)\overset{\text{\cite[p.33]{BLL}}}{=}\sum_{n\geq 0} \frac{\Phi^{(n)}(0)(x)}{n!}\Psi^n(x)=\sum_{n\geq 0} \frac{\frac{d^n \Phi(x)}{d^nx}|_{x=0}}{n!}\Psi(x)^n=\sum_{n\geq 0} \frac{{ \Phi(x)}^{(n)}|_{x=0}}{n!}\Psi(x)^n .$$}
Thus
$(\Phi\bcom \Psi)(x)=(\Phi \com \Psi)(x)$, and so $\Phi\bcom  \Psi\equiv \Phi \com \Psi.$
\end{proof}

The following properties from~\cite[Propositions~4.5, 4.6]{GKR}) hold for any integro-differential algebra
but, as pointed out above, their usage in species theory appears to be confined to virtual set
species.

\begin{prop}
  \mlabel{pp:comp5}
  Let $\Phi,\Psi\in \cvs$ and $\Omega\in \ker  {\E}$. Then
  \begin{enumerate}
  \item $\E(\Phi\com \Omega) = \E(\Phi)$. \mlabel{it:come1}

  \item  $(\Phi+\Psi) \com \Omega = \Phi\com \Omega + \Psi\com \Omega$. \mlabel{it:comsdis}

  \item $(\Phi \Psi)\com \Omega = (\Phi\com \Omega)(\Psi\com \Omega)$. \mlabel{it:compdis}

  \item $\Phi\com \Omega = \Phi$ if $\Phi\in \ker \dd$. \mlabel{it:compker}

  \item $\der{(\Phi\com \Omega)} = (\der{\Phi} \com \Omega)\, \der{\Omega}$. \quad (chain rule)\mlabel{it:chainc}

  \item $\pp((\Phi\com \Omega)\, \der{\Omega}) = \pp(\Phi) \com \Omega$. \quad (substitution rule) \mlabel{it:subc}

  \item $\divid{(\Phi\com \Omega)}{n} = \divid{\Phi}{n} \com \Omega$. \mlabel{it:compdiv}

  \item $(\Phi\com \Psi)\com \Omega = \Phi\com (\Psi\com \Omega)$ if $\Psi\in \ker  {\E}$. \mlabel{it:compass}
  \end{enumerate}
\end{prop}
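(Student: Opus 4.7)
The strategy is to unfold the defining series $\Phi \com \Omega = \sum_{n\ge 0} \E(\Phi^{(n)}) \, \divid{\Omega}{n}$ and systematically apply three ingredients: the divided-power calculus encoded in Proposition~\mref{pp:dividp5}, the multiplicativity of $\E$ guaranteed by Lemma~\mref{lem:eqid}, and the continuity of the ring, derivation, and integration from Theorem~\mref{thm:cont4}, which legitimizes term-by-term manipulations of the infinite sums in the completion $\cvs$. Under the standing hypothesis $\Omega \in \ker \E$, all eight items reduce to careful book-keeping with these tools.

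Taking the elementary items first: for \mref{it:come1}, only the $n=0$ summand survives after applying $\E$, since $\E(\divid{\Omega}{n}) = 0$ for $n\ge 1$ by Proposition~\mref{pp:dividp5}\mref{it:divide} and $\E$ is multiplicative; with $\divid{\Omega}{0}=1$ this yields $\E(\Phi\com\Omega) = \E(\Phi)$. Item~\mref{it:comsdis} is immediate from additivity of $\D$, $\E$, and the sum. For~\mref{it:compdis}, feed the Leibniz expansion $(\Phi\Psi)^{(n)} = \sum_{p+q=n}\binom{n}{p}\Phi^{(p)}\Psi^{(q)}$ into the series, split $\E$ multiplicatively, and use the divided-power product rule in Proposition~\mref{pp:dividp5}\mref{it:dividpr} to recognize the Cauchy product $(\Phi\com\Omega)(\Psi\com\Omega)$. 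Item~\mref{it:compker} falls out because $\Phi \in \ker\dd$ forces $\Phi^{(n)}=0$ for all $n\ge 1$, leaving $\E(\Phi)\divid{\Omega}{0} = \E(\Phi) = \Phi$, the last equality from $\E = \id - \pp\D$.

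The calculus-type items come next. For the chain rule~\mref{it:chainc}, differentiate the series term by term (valid by continuity of $\D$), invoke $(\divid{\Omega}{n})' = \divid{\Omega}{n-1}\der{\Omega}$ from Proposition~\mref{pp:dividp5}\mref{it:dividd}, and reindex to obtain $\der{\Omega}\sum_{m\ge 0}\E(\Phi^{(m+1)})\divid{\Omega}{m} = (\der{\Phi}\com\Omega)\der{\Omega}$. The substitution rule~\mref{it:subc} is obtained by applying $\pp$ to~\mref{it:chainc}, using $\pp\D = \J = \id - \E$ together with the fact that $\E(\pp(\Phi)\com\Omega) = \E(\pp(\Phi)) = 0$ (via~\mref{it:come1} and $\E\pp = 0$). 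For~\mref{it:compdiv}, proceed by induction on $n$: the case $n=0$ is trivial, and the inductive step combines the recursion $\divid{\Psi}{n} = \pp(\divid{\Psi}{n-1}\der{\Psi})$ for $\Psi = \Phi\com\Omega$ with the substitution rule~\mref{it:subc} and the chain rule~\mref{it:chainc}.

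The main obstacle is associativity~\mref{it:compass}. First, \mref{it:come1} guarantees $\Psi\com\Omega \in \ker\E$, so that $\Phi\com(\Psi\com\Omega)$ is well-defined. The plan is to show that both sides of~\mref{it:compass} satisfy the same initial-value condition: they have identical $\E$-value by iterating~\mref{it:come1}, and two successive applications of the chain rule~\mref{it:chainc} show that each side has derivative $(\der{\Phi}\com(\Psi\com\Omega))\cdot(\der{\Psi}\com\Omega)\cdot\der{\Omega}$. Uniqueness of the antiderivative with prescribed evaluation (a direct consequence of $\D\pp = \id$ together with the Hausdorff property of the topology built in Proposition~\mref{pp:ptopo}, whose completeness in the relevant cases is provided by Proposition~\mref{pp:lcomp}) then forces the two sides to coincide, completing the proof.
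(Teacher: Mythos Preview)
The paper does not supply its own proof of this proposition; it simply imports the result from~\cite[Propositions~4.5, 4.6]{GKR}. So there is no ``paper proof'' to compare against, and your task is really to produce a self-contained argument. Your treatment of items~\mref{it:come1}--\mref{it:compdiv} is correct and is essentially the standard one: multiplicativity and idempotence of~$\E$, the Leibniz expansion combined with Proposition~\mref{pp:dividp5}\mref{it:dividpr}, termwise differentiation via Proposition~\mref{pp:dividp5}\mref{it:dividd}, and the induction for~\mref{it:compdiv} using~\mref{it:compdis}, \mref{it:chainc}, \mref{it:subc} all go through as you describe.

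The argument for associativity~\mref{it:compass}, however, is circular. Applying the chain rule twice to the left side $(\Phi\com\Psi)\com\Omega$ gives derivative $\big((\Phi'\com\Psi)\com\Omega\big)\,(\Psi'\com\Omega)\,\Omega'$, whereas the right side $\Phi\com(\Psi\com\Omega)$ differentiates to $\big(\Phi'\com(\Psi\com\Omega)\big)\,(\Psi'\com\Omega)\,\Omega'$. These agree only if $(\Phi'\com\Psi)\com\Omega = \Phi'\com(\Psi\com\Omega)$, which is exactly the associativity statement for~$\Phi'$ in place of~$\Phi$---and $\Phi'$ is in no sense ``smaller'' than~$\Phi$, so no induction is available. (Incidentally, the uniqueness principle you need is purely algebraic: if $A'=B'$ and $\E(A)=\E(B)$ then $A-B=\E(A-B)+\pp\D(A-B)=0$; no appeal to Hausdorffness or to Proposition~\mref{pp:lcomp} is required, and the latter only covers the linear-species case anyway.)

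A clean way to close the gap is to expand the right side directly and use the items you have already established:
\[
\Phi\com(\Psi\com\Omega)
=\sum_{n\ge 0}\E(\Phi^{(n)})\,\divid{(\Psi\com\Omega)}{n}
\overset{\mref{it:compdiv}}{=}\sum_{n\ge 0}\E(\Phi^{(n)})\,\big(\divid{\Psi}{n}\com\Omega\big)
\overset{\mref{it:compker},\mref{it:compdis}}{=}\sum_{n\ge 0}\big(\E(\Phi^{(n)})\,\divid{\Psi}{n}\big)\com\Omega,
\]
using in the last step that $\E(\Phi^{(n)})\in\ker\D$ so $\E(\Phi^{(n)})\com\Omega=\E(\Phi^{(n)})$. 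Continuity of $(-)\com\Omega$ (which follows from Theorem~\mref{thm:cont4} since it is built from the continuous operations) then lets you pull the sum outside to obtain $(\Phi\com\Psi)\com\Omega$.
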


Denote by $\cvs^\ast$ the group of invertible elements of $\cvs$.  The following maps introduced
in~\cite[Definitions 5.2, 5.6]{GKR} along with their properties from~\cite[Propositions~5.3, 5.7]{GKR} can be
established for any integro-differential algebra (and thus for all three species rings).

\begin{defn}
  \mlabel{defn:exp}
  Let $\Phi, \Psi\in \cvs$. We define
  \begin{align*}
    \exp:&\, \ker {\E} \to 1+ \ker {\E},\quad \Phi \mapsto \sum_{n\geq 0} \divid{\Phi}{n},\,\text{ and }\\
    \log:&\, \cvs^\ast \to \ker {\E}, \quad \Phi\mapsto  \vint \inv{\Phi} \der{\Phi}.
  \end{align*}
\end{defn}

\begin{prop}
  Let $\Phi\in \ker {\E}$ and $\Psi\in \cvs^\ast$.
  Then
  \begin{enumerate}
  \item The functions $\exp$ and $\log$ are continuous. \mlabel{it:expc}

  \item $\der{(\exp(\Phi))} = \exp(\Phi) \der{\Phi}$ and
    $\der{(\log (\Psi))} = \inv{\Psi} \der{\Psi}$. \mlabel{it:explogd}
  \end{enumerate}
  \mlabel{pp:expc}
\end{prop}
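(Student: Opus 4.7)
The plan is to handle part~(ii) first as a formal consequence of the divided-power relations in Proposition~\mref{pp:dividp5}, and then derive part~(i) from continuity of the basic ring operations (Theorem~\mref{thm:cont4}).

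For the derivative formulas: applying the section axiom $\partial  \vint = \id_{\cvs}$ to $\log \Psi = \vint \Psi^{-1}\der{\Psi}$ yields $\der{(\log\Psi)} = \Psi^{-1}\der{\Psi}$ immediately. For $\exp$, I would consider the partial sum $S_N(\Phi) := \sum_{n=0}^N \divid{\Phi}{n}$. Proposition~\mref{pp:dividp5}\mref{it:dividd} gives
\[
\der{S_N(\Phi)} = \sum_{n=1}^N \divid{\Phi}{n-1}\der{\Phi} = \der{\Phi}\, S_{N-1}(\Phi).
\]
Since $S_N(\Phi) \to \exp(\Phi)$ in $\cvs$ and both $\partial$ and multiplication are continuous by Theorem~\mref{thm:cont4}, passing to the limit $N\to\infty$ (in the complete ring $\cvs$) yields $\der{(\exp(\Phi))} = \der{\Phi}\exp(\Phi) = \exp(\Phi)\der{\Phi}$.

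For part~(i), continuity of $\log$ is straightforward: $\log$ is the composition of inversion on $\cvs^\ast$, derivation, multiplication and integration, all continuous by Theorem~\mref{thm:cont4} (inversion being continuous on units in a complete filtered ring via the standard geometric-series argument). Continuity of $\exp$ is subtler and relies on the uniform tail estimate
\[
\ord\bigl(\divid{\Phi}{n}\bigr) \;\geq\; n \qquad \text{for every } \Phi\in\ker\E \text{ and } n\geq 0,
\]
which I would establish by induction on $n$. The base case is Proposition~\mref{pp:dividp5}\mref{it:divide}. For the inductive step, Leibniz expansion of $\partial^k \divid{\Phi}{n} = \partial^{k-1}(\divid{\Phi}{n-1}\der{\Phi})$ combined with the multiplicativity of $\E$ (Lemma~\mref{lem:eqid}) forces each product term appearing in $\E(\partial^k \divid{\Phi}{n})$ to contain a factor $\E(\partial^j\divid{\Phi}{n-1})$ with $j<n-1$ or a factor $\E(\partial^i \Phi)$ with $i<1$; by the inductive hypothesis and $\Phi\in\ker\E$, such factors vanish whenever $k<n$.

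Once this order bound is in hand, the ultrametric-like property of $d$ yields
\[
d\bigl(S_N(\Phi),\exp(\Phi)\bigr) \;\leq\; 2^{-(N+1)},
\]
uniformly for $\Phi\in\ker\E$, so $S_N \to \exp$ converges uniformly. Since each $S_N$ is continuous by Proposition~\mref{pp:dividp5}(a), continuity of $\exp$ follows. The main obstacle I anticipate is precisely this order estimate for divided powers; the remaining steps are routine manipulations in a topological integro-differential ring.
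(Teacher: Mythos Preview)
Your argument is correct. The paper itself does not give a proof of this proposition; it simply imports it from \cite[Propositions~5.3, 5.7]{GKR}, so there is no in-paper proof to compare against. Your approach---deducing the $\log$ derivative from the section axiom, the $\exp$ derivative by differentiating partial sums termwise via Proposition~\mref{pp:dividp5}\mref{it:dividd} and passing to the limit using Theorem~\mref{thm:cont4}, and continuity of $\exp$ via the uniform tail estimate $\ord(\divid{\Phi}{n}) \ge n$ for $\Phi \in \ker\E$---is the natural one and is essentially what the cited reference does.

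Two small remarks on presentation. First, your inductive step for the order bound can be stated more cleanly: from $\partial^k\divid{\Phi}{n} = \partial^{k-1}(\divid{\Phi}{n-1}\,\Phi')$ and Leibniz, every summand of $\E(\partial^k\divid{\Phi}{n})$ contains a factor $\E\bigl((\divid{\Phi}{n-1})^{(i)}\bigr)$ with $i \le k-1 < n-1$, which already vanishes by the inductive hypothesis; the alternative ``or'' clause about $\E(\partial^i\Phi)$ is never needed. Second, for continuity of inversion (not listed in Theorem~\mref{thm:cont4}), the identity $\Psi_1^{-1}-\Psi_2^{-1} = \Psi_1^{-1}(\Psi_2-\Psi_1)\Psi_2^{-1}$ together with $\ord(AB) \ge \ord(A)+\ord(B)$ gives the required estimate directly, so you can avoid invoking a geometric-series argument.
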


When the base ring is a field of characteristic zero, by Proposition~\mref{pp:dividp5}~\mref{it:dividp} we have
$\Phi^{[n]}=\Phi^n/n!, n\geq 0$. Thus the analytic exponential $e^\Phi$ of the species $\Phi$ coincides with $\exp(\Phi)$:
\begin{equation}
e^{\Phi}=\exp(\Phi).
\mlabel{eq:twoexp}
\end{equation}

Also, applying a general result on integro-differential algebra~\cite[Lemma 5.4]{GKR}, $(1+\ker {\E},\cdot)$ is a subgroup of $(\cvs^\ast,\cdot)$. Further applying \cite[Propositions~5.5, 5.8, 5.10]{GKR}, we obtain

\begin{prop}
  \mlabel{pp:egph}
  Let $\Phi\in \ker {\E}$ and $\Psi\in \cvs^\ast$.
  Then
  \begin{enumerate}
  \item The function $\exp\colon (\ker {\E},+) \to (1+\ker {\E},\cdot)$ is a group
    homomorphism, that is,
    $$\exp(0) = 1,\, \exp(\Phi + \Psi) = \exp(\Phi) \exp(\Psi),\, \exp(-\Psi) = \exp(\Psi)^{-1}\quad\text{ for }\, \Phi,\Psi\in \ker {\E}.$$

  \item The function $\log\colon (\cvs^\ast,\cdot) \to (\ker {\E}, +)$ is a group
    homomorphism, that is,
    $$\log(1) = 0,\, \log(\Phi\Psi) = \log(\Phi) + \log(\Psi),\, \log(\Phi^{-1}) = -\log(\Phi)\quad\text{ for }\, \Phi,\Psi\in \cvs^\ast.$$

  \item The function $\exp\colon (\ker {\E},+) \to (1+\ker {\E}, \cdot)$ and
    $\log: (1+\ker {\E}, \cdot)\to (\ker {\E},+) $ are inverse group isomorphisms.
    \mlabel{it:egph3}
  \end{enumerate}
\end{prop}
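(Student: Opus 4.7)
The plan is to verify the three items in turn, leveraging the divided-power calculus of Proposition~\mref{pp:dividp5} together with the derivative formulas from Proposition~\mref{pp:expc}\mref{it:explogd} and the basic identities $\E = \id - \vint\D$ and $\J = \vint\D$. Since $(\vs, \D, \vint)$ is an integro-differential ring over $\QQ$, by Proposition~\mref{pp:dividp5}\mref{it:dividp} the divided powers of any $\Phi \in \ker\E$ coincide with the rescaled ordinary powers $\divid{\Phi}{n} = \Phi^n/n!$, which will be the crucial fact throughout.

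For item (a), I would first observe that $\exp(0) = 1$ follows from the inductive definition $\divid{0}{0} = 1$ and $\divid{0}{n} = \vint(\divid{0}{n-1} \cdot 0) = 0$ for $n \geq 1$. The homomorphism identity $\exp(\Phi+\Psi) = \exp(\Phi)\exp(\Psi)$ then reduces, via $\divid{\Omega}{n} = \Omega^n/n!$, to the binomial identity $(\Phi+\Psi)^n/n! = \sum_{i+j=n} \Phi^i \Psi^j / (i!\,j!)$ summed over $n$. The inversion formula $\exp(-\Psi) = \exp(\Psi)^{-1}$ then follows automatically.

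For item (b), well-definedness on $\ker\E$ comes from $\E\vint = \vint - \vint\D\vint = 0$. Then $\log(1) = \vint 1^{-1} \D(1) = 0$ since $\D(1) = 0$, and the product rule follows from the Leibniz rule:
\begin{equation*}
\log(\Phi\Psi) = \vint (\Phi\Psi)^{-1}(\Phi'\Psi + \Phi\Psi') = \vint \Phi^{-1}\Phi' + \vint \Psi^{-1}\Psi' = \log(\Phi)+\log(\Psi),
\end{equation*}
using additivity of $\vint$. The inversion identity is then automatic.

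For item (c), one direction is essentially immediate from Proposition~\mref{pp:expc}\mref{it:explogd}: for $\Phi \in \ker\E$,
\begin{equation*}
\log(\exp(\Phi)) = \vint \exp(\Phi)^{-1}\cdot \exp(\Phi)\der{\Phi} = \vint \der{\Phi} = \J(\Phi) = \Phi,
\end{equation*}
where the last equality uses $\J = \id - \E$ together with $\Phi \in \ker\E$. The other direction is the main obstacle: given $\Psi \in 1+\ker\E$, I would consider the element $\exp(\log\Psi) \cdot \Psi^{-1}$ and show it equals $1$. Using Proposition~\mref{pp:expc}\mref{it:explogd} twice together with $\D(\Psi^{-1}) = -\Psi^{-2}\der{\Psi}$ (from differentiating $\Psi\Psi^{-1}=1$) yields $\D\bigl(\exp(\log\Psi)\Psi^{-1}\bigr) = 0$, so this element is a differential constant. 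Its evaluation is $\E(\exp(\log\Psi))\E(\Psi^{-1}) = 1\cdot 1 = 1$ because both factors lie in $1+\ker\E$, and since any differential constant $K$ satisfies $\E(K) = K$, we conclude $\exp(\log\Psi)\Psi^{-1} = 1$. The delicate point in this argument is keeping track of where each element lives ($\ker\E$ versus $1+\ker\E$ versus $\cvs^\ast$) and confirming that $\E$ is indeed multiplicative on these subsets, which is guaranteed by Lemma~\mref{lem:eqid}\mref{it:exy}.
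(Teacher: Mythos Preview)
Your argument is correct and supplies precisely the details that the paper omits: the paper gives no proof at all for this proposition, simply invoking \cite[Propositions~5.5, 5.8, 5.10]{GKR}. Your direct verification via the divided-power identities, the Leibniz rule, and the multiplicativity of $\E$ from Lemma~\mref{lem:eqid}\mref{it:exy} is exactly the sort of argument one expects the cited reference to contain, and your treatment of item~(c)---showing $\exp(\log\Psi)\,\Psi^{-1}$ is a differential constant with evaluation~$1$---is the standard route.

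One small caveat: your opening assertion that $(\vs,\D,\vint)$ is an integro-differential ring \emph{over $\QQ$} is not literally true in the linear-species case $\vs=\lvir$, which the paper identifies with $\ZZ[[X]]$. Your binomial computation in item~(a) survives nonetheless, since all three species rings are $\ZZ$-torsion-free: from Proposition~\mref{pp:dividp5}\mref{it:dividp} one has
\[
n!\,\divid{(\Phi+\Psi)}{n}=(\Phi+\Psi)^n=\sum_{i+j=n}\binom{n}{i}\,\Phi^i\Psi^j
=\sum_{i+j=n}\binom{n}{i}\,i!\,j!\,\divid{\Phi}{i}\divid{\Psi}{j}
=n!\sum_{i+j=n}\divid{\Phi}{i}\divid{\Psi}{j},
\]
and cancelling $n!$ is legitimate. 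So the issue is purely one of phrasing, not a genuine gap.
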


Applying ~\meqref{eq:twoexp} and Proposition~\mref{pp:egph}~\mref{it:egph3}, we conclude that
the analytic logarithm, defined in~\cite[p. 9]{Lab5} as the inverse of the analytic exponential,
agrees with the function $\log$ defined above (but not with the combinatorial logarithm
in~\cite{BLL}).

Using $\exp$ and $\log$, we can now introduce exponentiation relative to an arbitrary invertible
base such that some well-known natural properties can be proved to hold~\cite[Proposition~5.12]{GKR}.

\begin{defn}
  \mlabel{defn:expon}
  For $\Phi\in \cvs^\ast$ and $\Psi\in \cvs$ we define \name{exponentiation}
  $\Phi^\Psi \in 1+\ker {\E}$ with \name{base} $\Phi$ and \name{exponent} $\Psi$ by
  $$\Phi^\Psi:= \exp(\Psi \log (\Phi)).$$
\end{defn}

\begin{prop}
  Let $\Phi, \Phi_1, \Phi_{2}\in \cvs^\ast$ and $\Psi, \Psi_{1}, \Psi_{2}\in \cvs$. Then
  \begin{enumerate}
  \item $\Phi^{\Psi_{1} + \Psi_{2}} = \Phi^{\Psi_{1}} \Phi^{\Psi_{2}}$. \mlabel{it:esum}

  \item $(\Phi_1 \Phi_{2})^\Psi  = \Phi_1^\Psi \Phi_{2}^\Psi$. \mlabel{it:epro}

  \item $(\Phi^{\Psi_{1}})^{\Psi_{2}} = \Phi^{\Psi_{1} \Psi_{2}}$. \mlabel{it:epow}

  \item $\log (\Phi^\Psi) = \Psi \log (\Phi)$. \mlabel{it:elog}

  \item $\der{(\Phi^\Psi)} = \Phi^\Psi \der{\Psi}\log(\Phi) + gf^{\Psi-1} \der{\Phi}$. \mlabel{it:eder}

  \item $(\exp(\Omega))^\Psi = \exp(\Psi\, \Omega)$ with $\Omega\in \ker {\E}$. \mlabel{it:egpow}

  \end{enumerate}
  \mlabel{pp:exponen}
\end{prop}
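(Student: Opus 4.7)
The plan is to unfold each identity to the definition $\Phi^\Psi := \exp(\Psi\log(\Phi))$ and reduce everything to the established group-homomorphism properties of $\exp$ and $\log$ from Proposition~\mref{pp:egph}, together with the derivative formulas of Proposition~\mref{pp:expc}~\mref{it:explogd}. A preliminary observation that will be used throughout is that $\ker\E$ is an ideal of $\cvs$: since $\E$ is multiplicative (Lemma~\mref{lem:eqid}~\mref{it:exy}), $\Omega\in\ker\E$ forces $x\Omega\in\ker\E$ for every $x\in\cvs$. In particular, for any $\Phi\in\cvs^\ast$ and $\Psi\in\cvs$, the product $\Psi\log(\Phi)$ lies in $\ker\E$, so $\exp(\Psi\log(\Phi))$ is well defined and lies in $1+\ker\E$.

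For items \mref{it:esum}, \mref{it:epro}, \mref{it:epow}, \mref{it:elog} and \mref{it:egpow}, I would simply unfold the definition and apply the homomorphism laws. Concretely, \mref{it:esum} follows from distributivity $(\Psi_1+\Psi_2)\log(\Phi)=\Psi_1\log(\Phi)+\Psi_2\log(\Phi)$ combined with $\exp(a+b)=\exp(a)\exp(b)$. For \mref{it:epro}, use $\log(\Phi_1\Phi_2)=\log(\Phi_1)+\log(\Phi_2)$, then distributivity and the same $\exp$ homomorphism. For \mref{it:elog} and \mref{it:epow}, use the mutual inverseness $\log\circ\exp=\id_{\ker\E}$ from Proposition~\mref{pp:egph}~\mref{it:egph3}: $\log(\Phi^\Psi)=\log(\exp(\Psi\log\Phi))=\Psi\log(\Phi)$, and then $(\Phi^{\Psi_1})^{\Psi_2}=\exp(\Psi_2\log(\Phi^{\Psi_1}))=\exp(\Psi_2\Psi_1\log\Phi)=\Phi^{\Psi_1\Psi_2}$ (commutativity of $\cvs$). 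Item \mref{it:egpow} is a direct application of the same inverseness identity.

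The one identity needing a genuine calculation is the chain-type formula \mref{it:eder}. Here I would compute
\[
\der{(\Phi^\Psi)}=\der{(\exp(\Psi\log\Phi))}=\exp(\Psi\log\Phi)\cdot\der{(\Psi\log\Phi)}
\]
by Proposition~\mref{pp:expc}~\mref{it:explogd}, then use the ordinary Leibniz rule together with $\der{(\log\Phi)}=\Phi^{-1}\der{\Phi}$ to rewrite
\[
\der{(\Psi\log\Phi)}=\der{\Psi}\,\log(\Phi)+\Psi\Phi^{-1}\der{\Phi}.
\]
Multiplying by $\Phi^\Psi$ gives $\Phi^\Psi\der{\Psi}\log(\Phi)+\Psi\Phi^\Psi\Phi^{-1}\der{\Phi}$. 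To match the stated form it remains to check $\Phi^\Psi\Phi^{-1}=\Phi^{\Psi-1}$, which follows from part~\mref{it:esum} (applied to $\Psi$ and $-1$) once one verifies $\Phi^{-1}=\exp(-\log\Phi)$; but this last identity is exactly the statement that $\log$ is a group homomorphism (Proposition~\mref{pp:egph}).

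I do not anticipate a serious obstacle here: every step is a direct consequence of previously proved results. The only spot requiring care is the well-definedness check (confirming that the arguments of $\exp$ lie in $\ker\E$, which uses multiplicativity of $\E$), and the bookkeeping in~\mref{it:eder} to recognise $\Phi^\Psi\Phi^{-1}$ as $\Phi^{\Psi-1}$; both are routine once the ideal property of $\ker\E$ and the inverse-pair nature of $(\exp,\log)$ are in hand.
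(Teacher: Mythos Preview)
The paper does not give its own proof of this proposition; it is imported from \cite[Proposition~5.12]{GKR}. Your approach---unfolding $\Phi^\Psi=\exp(\Psi\log\Phi)$ and invoking the homomorphism properties of $\exp$ and $\log$ from Proposition~\mref{pp:egph} together with the derivative rules of Proposition~\mref{pp:expc}---is the natural one and is undoubtedly what the cited reference does. Items~\mref{it:esum}, \mref{it:epro}, \mref{it:epow}, \mref{it:elog} and~\mref{it:egpow} go through exactly as you describe; the ideal property of $\ker\E$ that you flag is indeed the right well-definedness check.

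There is, however, a real gap in your argument for~\mref{it:eder}. You assert that $\Phi^{-1}=\exp(-\log\Phi)$ ``is exactly the statement that $\log$ is a group homomorphism''. It is not: the homomorphism property only gives $\log(\Phi^{-1})=-\log\Phi$, and applying $\exp$ to both sides yields $\exp(\log(\Phi^{-1}))=\exp(-\log\Phi)$. The left-hand side equals $\Phi^{-1}$ only if $\Phi^{-1}\in 1+\ker\E$, since Proposition~\mref{pp:egph}\mref{it:egph3} establishes $\exp\circ\log=\id$ on $1+\ker\E$, not on all of $\cvs^\ast$. In fact, for arbitrary $\Phi\in\cvs^\ast$ one has $\exp(\log\Phi)=\Phi/\E(\Phi)$ (since $\Phi/\E(\Phi)\in 1+\ker\E$ and $\log(\E(\Phi))=0$), so $\exp(-\log\Phi)=\E(\Phi)\,\Phi^{-1}$, which differs from $\Phi^{-1}$ unless $\E(\Phi)=1$. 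Your computation is correct up to
\[
(\Phi^\Psi)'=\Phi^\Psi\,\Psi'\log\Phi+\Psi\,\Phi^\Psi\Phi^{-1}\,\Phi';
\]
it is only the final rewriting of $\Phi^\Psi\Phi^{-1}$ as $\Phi^{\Psi-1}$ that requires the extra hypothesis $\Phi\in 1+\ker\E$. (The printed statement already carries an obvious typo, $gf^{\Psi-1}$ for what is presumably $\Psi\Phi^{\Psi-1}$, so the intended scope of~\mref{it:eder} is somewhat unclear from the paper alone.)
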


\noindent
{\bf Acknowledgements}:
This work is supported by NNSFC (12071191) and Innovative Fundamental Research Group Project of Gansu Province (23JRRA684). We would like to thank Gilbert Labelle who was instrumental in drawing up the basic idea of integration by parts for species (whose iteration has led Joyal to the eponymous integral). His inspiring talk on combinatorial integration at the ACA 2014 conference at Fordham University has laid the foundation for our present work.

\noindent
{\bf Declaration of interests. } The authors have no conflicts of interest to disclose.

\noindent
{\bf Data availability. } Data sharing is not applicable as no new data were created or analyzed.

\end{document}